%
%
%
%
%
%
%
\documentclass[a4, 12pt]{article}
\usepackage{amsmath, amssymb, amscd, amsthm}
\usepackage{graphicx}


\makeatletter
\@addtoreset{equation}{section}

\makeatother

\def\ii{{\rm{i}}}

\def\ee{{\rm{e}}}
\def\uu{{{u}}}

\def\tr{{\rm{tr}}}
\def\prop{{\rm{prop}}}

\def\EE{{\bf{E}}}
\def\RR{{\mathbb{R}}}
\def\EE{{\mathbb{E}}}

\def\cE{{\mathcal{E}}}
\def\cF{{\mathcal{F}}}
\def\cS{{\mathcal{S}}}
\def\cA{{\mathcal{A}}}
\def\cL{{\mathcal{L}}}
\def\ZZ{{\mathbb{Z}}}
\def\uu{{{u}}}
\def\KK{{{K}}}

\def\supp{{\rm{supp}}}
\def\SDiff{{\rm{SDiff}}}
\def\IFluid{{\rm{IFluid}}}
\def\Coor{{\rm{Coor}}}
\def\Diff{{\rm{Diff}}}
\def\cC{{\mathcal{C}}}
\def\sdiff{{\mathfrak{sdiff}}}
\def\diff{{\mathfrak{diff}}}
\def\epsT{{\varepsilon_T}}
\def\epsX{{\epsilon_\xi}}
\def\old{{\mathrm{old}}}
\def\new{{\mathrm{new}}}
\def\free{{\mathrm{free}}}
\def\dist{{\mathrm{dist}}}
\def\exact{{\mathrm{exact}}}
\def\sing{{\mathrm{sing}}}
\def\Area{{\mathrm{Area}}}
\def\Length{{\mathrm{Length}}}
\def\two{{\mathrm{two}}}
\def\mul{{\mathrm{mul}}}
\def\tri{{\mathrm{tri}}}
\def\wall{{\mathrm{wall}}}
\def\tgamma{{\tilde{\gamma}}}
\def\um{{\mu\mathrm{m}}}
\newtheorem{theorem}{Theorem}
\newtheorem{lemma}{Lemma}

\newtheorem{assumption}{Assumptions}
\newtheorem{definition}{Definition}
\newtheorem{proposition}{Proposition}
\newtheorem{myremark}{Remark}

\newcommand{\by}[1]{{\sc{#1}, }}
\newcommand{\jour}[1]{{\textrm{#1} }}
\newcommand{\Paper}[1]{{\em{#1}, }}
\newcommand{\vol}[1]{{\textrm{#1}} }
\newcommand{\yr}[1]{{(\textrm{#1})} }
\newcommand{\pages}[1]{{pp.\textrm{#1}}}
\newcommand{\Book}[1]{{\em{#1},} }
\newcommand{\publ}[1]{{(\textrm{#1}, }}
\newcommand{\publaddr}[1]{{\textrm{#1}, }}
\newcommand{\byr}[1]{{\textrm{#1}) }}

%
%
%
%
%
\begin{document}

\title{Surface tension of multi-phase flow with multiple junctions
      governed by the variational principle  }


\author{Shigeki Matsutani 
\and Kota Nakano \and Katsuhiko Shinjo}




\maketitle

\begin{abstract}
We explore a computational model of an incompressible fluid
with a multi-phase field in three-dimensional Euclidean space.
By investigating an incompressible fluid with a two-phase field
geometrically,
we reformulate the expression of the surface tension
for the two-phase field found by 
Lafaurie, Nardone, Scardovelli, Zaleski and Zanetti
(J. Comp. Phys. \vol{113} \yr{1994} \pages{134-147})
as a variational problem related to an infinite dimensional Lie group,
the volume-preserving diffeomorphism.
The variational principle to the action integral
with the surface energy reproduces their Euler equation
of the two-phase field with the surface tension.
Since the surface energy of multiple interfaces even with singularities
is not difficult to be evaluated in general 
and the variational formulation
works for every action integral,
the new formulation enables us to
extend their expression to that of a multi-phase ($N$-phase, $N\ge2$) flow
and to obtain a novel Euler equation with the surface tension 
of the multi-phase field.
The obtained Euler equation governs 
the equation of motion of the multi-phase field
with different surface tension coefficients without 
any difficulties for the singularities at multiple junctions.
In other words, we unify the theory of
multi-phase fields which express low dimensional interface geometry
and the theory of the incompressible fluid dynamics on 
the infinite dimensional geometry as a variational problem.
We apply the equation to the contact angle  problems
at triple junctions.
We computed the fluid dynamics for a two-phase field with a wall
numerically and show the numerical computational results
that for given surface tension coefficients, 
the contact angles are generated
by the surface tension as results of balances of the kinematic energy
and the surface energy.
Keywords: 
multi-phase flow\and surface tension\and multiple junction\and 
volume-preserving diffeomorphism

37K65 \and 
58E12 \and 
76T30 \and 
76B45      

\end{abstract}


\section{Introduction} \label{sec:int}

Recently, since the developments of both hardware and software
in computer science enable us to simulate complex
physical processes numerically, such computer simulations
become more important from industrial viewpoints.
Especially 
the computation of the incompressible multi-phase fluid dynamics
has crucial roles in order to evaluate the behavior of several
devices and materials in a micro-region,
{\it {e.g.}}, ink-jet printers, solved toners and so on.
In the evaluation, it is strongly required that
the fluid interfaces with multiple junctions are 
stably and naturally computed from these practical reasons.

In this article, 
in order to handle the fluid interfaces with multiple junctions 
in a three dimensional micro-region, 
we investigate a surface tension of an incompressible multi-phase 
flow with multiple junctions as a numerical computational method
under the assumption that the Reynolds number is not so large.
In the investigation, we encounter many interesting mathematical
objects and results,
which are associated with
low dimensional interface geometry having singularities, and
with the infinite dimensional geometry of incompressible fluid dynamics.
Further since even in a macroscopic theory,
we introduce artificial intermediate regions in the material
interfaces among different fluids or among a solid and fluids,
the regions give a resolution of the singularities in the interfaces
to provide extended Euler equations naturally.
Thus even though we consider the multi-phase fluid model as
a computational model,
we believe that it must be connected with mathematical nature of 
real fluid phenomena as their description.
We will mention the background, the motivation and the strategy of
this study more precisely as follows.

For a couple of decades, 
in order to represent the physical process with
the interfaces of the multi-phase fluids, the computational
schemes have been studied well.
These schemes are mainly classified into two types.
The first type is based on the level-set method \cite{S}
discovered by H-K. Zhao, T. Chan, B. Merriman, S. Osher
and L. Wang \cite{ZCMO,ZMOW}.
The second one
is based on the phase-field theory, which was found by
 J. U. Brakbill, D. B. Kothe and C. Zemach \cite{BKZ},
and B. Lafaurie, C. Nardone, R. Scardovelli, S. Zaleski, and G. Zanetti
\cite{LZZ}.
The authors in Reference \cite{LZZ} called the scheme SURFER.
Following them, there are many studies on the SURFER scheme,
{\it{e.g.}}, \cite[references therein]{AMS,Cab,Jac}.

The level-set method is a computational method in which we describe a
(hyper-)surface in terms of zeros of the level-set function, {\it{i.e.}}, a
real function whose value is a signed distance from the surface,
such as $q(x)$ in Section \ref{sec:two-one}.
Using the scheme based upon the level-set method
in the three dimensional Euclidean space,
we can deal well with  topology changes,
geometrical objects with singularities, {\it{e.g.}}, cusps,
the multiple junctions of materials, and so on.
However in the computation, we need to deal with the
constraint conditions even for two-phase fluids
\cite{ZCMO,ZMOW}.
A dynamical problem with constraint conditions
is basically complicate and sometimes gives difficulties
to find its solution since the constraint conditions sometimes
generate an ill-posed problem in the optimization. 
In the numerical computation for incompressible fluid,
we must check the consistency between 
the incompressible condition and the constraint condition.
The check generally requires a complicate implementation
of the algorithm, and increases computational cost.
Its failure sometimes makes the computation unstable, especially
when we add some other physical conditions.
Since instability disturbs the evaluation of a complex
system as a model of a real device, it must be avoided.

On the other hand, 
using the SURFER scheme \cite{LZZ}, 
we can easily compute effects of the surface tension of a two-phase fluid
in the Navier-Stokes equation.
The phase field model is the model that we represent materials in terms
of supports of smooth functions which roughly correspond to
the partition of unity in pure mathematics \cite[I p.272]{KN}
as will be mentioned in Sections \ref{sec:four} and \ref{sec:five}.
We call these functions
{\lq\lq}color functions{\rq\rq} or {\lq\lq}phase fields{\rq\rq}.
The phase fields have artificial intermediate regions which
represent their interfaces  approximately.
In the SURFER scheme \cite{LZZ},
the surface tension is given as a kind of stress force, or volume force
due to the intermediate region.
Hence the scheme makes the numerical computations of the surface tension 
stable.
However
it is not known how to consider a multi-phase ($N$-phase, $N\ge2$) flow
 in their scheme.
In Reference \cite{BKZ}, the authors propose a method as an
extension of the SURFER scheme \cite{LZZ}
 to the contact angle problem
by imposing a constraint to fix its angle.
In this article, we will generalize 
 the SURFER scheme to multi-phase flow
without any constraints.

Nature must not impose any constraints even at such a triple junction,
which is governed by a physical principle. If it is a 
Hamiltonian system, its determination must obey the minimal
principle or the variational principle.
We wish to find a theoretical framework
in which we can consistently
handle the incompressible flows with interfaces including 
the surface tensions and the multiple junctions without any constraints.
As the multiple junctions should be treated as singularities in 
a mathematical framework which are very difficult to be handled
in general, 
it is hard to extend
mathematical approaches for fluid interface problems without
a multiple junction
\cite{BG,SZ} to a theory for the problem with multiple junctions.
Our purpose of this article is to find such a theoretical
framework which enables us to solve the fluid interface problems with 
multiple junctions numerically
as an extension of the SURFER scheme.

For the purpose, we employ the phase field model.
The thickness of the actual intermediate region 
in the interface between a solid and a fluid or between
two fluids is of atomic order and 
is basically negligible in the macroscopic theory. 
However the difference
between zero and {\lq\lq}the limit to zero{\rq\rq} 
sometimes brings a crucial difference in physics and
mathematics; for example, in the Sato hyperfunction theory,
the delta function is regarded as a function in the boundary
of the holomorphic functions \cite{KKK,II}, {\it{i.e.}},
$\displaystyle{\delta(x) =
 \lim_{\epsilon \to 0}\frac{1}{2\pi \ii}
\left(\frac{1}{x - \ii \epsilon}
-\frac{1}{x + \ii \epsilon}\right)}$
$\displaystyle{
\equiv \lim_{\epsilon \to 0}\frac{1}{\pi }
\frac{\epsilon}{x^2 +  \epsilon^2}}$.
As mentioned above,
the phase field model has the artificial intermediate region which is
controlled by a small parameter $\epsilon$ 
and appears explicitly even as a macroscopic
theory. We regard that it represents the effects coming from
the actual intermediate region of materials.
Namely, we regard that the stress force expression in 
the SURFER scheme
is caused by the artificial intermediate region of the phase-fields
and it represents well the surface effect
coming from that of real materials.

In order to extend the stress force expression of the two-phase
flow to that of the multi-phase ($N$-phase, $N\ge2$) flow,
we will first reformulate 
the SURFER scheme in the framework of the variational theory.
In Reference \cite{Jac}, a similar attempt was reported
but unfortunately there were not precise derivations. Our
investigations in Section \ref{sec:four}  
show that the surface tension expression of
the SURFER scheme
is derived as a momentum conservation in 
Noether's theorem \cite{BGG,IZ} and its derivation
 requires a generalization of the Laplace equation \cite{LL} as
the Euler-Lagrange equation \cite{AM,BGG}, which is not trivial
even for a static case.

In order to deal with this problem in a dynamics case consistently,
we should also consider the Euler equation in the framework
of the variational principle.
It is well-known that the incompressible fluid dynamics is
geometrically interpreted as 
a variational problem of an infinite
dimensional Lie group, related to diffeomorphism, due to
V. I. Arnold \cite{Ar,AK}, D. Ebin and J. Marsden \cite{EM}, H. Omori \cite{O}
and so on. Following them, there are so many related works
\cite{AF,B,K,NHK,Schm,Shk,Shn,V}.

On the reformulation of the SURFER scheme \cite{LZZ} for the dynamical case,
we introduce an action integral including 
the kinematic energy of the incompressible fluid and the surface energy.
The variational method reproduces the governing equation in
the SURFER scheme.

After then, we extend the surface energy to that of multi-phase fields 
and add the energy term to the action integral.
The variational principle of the action integral
leads us to a novel expression of the surface tension
and the extended Euler equation which we require.
Using the extended Euler equation,
we can deal with the surface tensions of the 
multi-phase flows, the multiple junctions of the 
of phase fields including singularities, the topology changes and so on.
We can also compute a wall effect naturally and a contact
angle problem.
The computation of the governing equation is freed from any
constraints, except the incompressible condition.

In other words, in this article, we completely unify
the theory of the multi-phase ($N$-phase, $N\ge2$) field
and the theory of the incompressible fluid dynamics of Euler equation
as an infinite dimensional geometrical problem.

Contents are as follows:
Section \ref{sec:two} is devoted to the preliminaries
 of the theory of surfaces
in our Euclidean space from a low-dimensional
differential geometrical viewpoint
\cite{M,GMO,FW} and Noether's theorem in the  
classical field theory \cite{AM,BGG,IZ}.
Section \ref{sec:three} reviews the derivation of the Euler equation
to the incompressible fluid dynamics following the variational method
for an infinite-dimensional Lie algebra based upon Reference \cite{EM}. 
In Section \ref{sec:four}, we reformulate the SURFER scheme \cite{LZZ}.
There the Laplace equation for the surface tension 
and the Euler equation in Reference \cite{LZZ}
are naturally obtained by the variational method
in Propositions \ref{prop:4-4} and \ref{prop:4-6}.
Section \ref{sec:five} is our main section in which we extend
the theory in Reference \cite{LZZ}  to that for a multi-phase flow
and obtain the Euler equation with the surface tension
of the multi-phase field in Theorem \ref{th:5-2}.
The extended Euler equation for the multi-phase flow
is derived from the variational
principle of the action integral in Theorem  \ref{th:5-1}.
As a special case, we also derive the 
Euler equation to a two-phase field with wall effects
in Theorem \ref{th:5-3}.
In Section 6, 
using these methods in the computational fluid dynamics \cite{Ch,H,HN},
we consider numerical computations 
of the contact angle problem of a two-phase field
because the contact angle problem 
for the two-phase field circumscribed in a wall
is the simplest non-trivial triple junction problem.
By means of our scheme, for given surface tension coefficients, 
we show two examples of the numerical computations in which
the contact angles automatically appeared
without any geometrical constraints
 and any difficulties for the singularities at triple junctions.
The computations were very stable.
Precisely speaking, as far as we computed, 
the computations did not collapse for any boundary conditions
and for any initial conditions.



\section{Mathematical Preliminaries} \label{sec:two}

\subsection{Preliminary of surface theory} \label{sec:two-one}

In this subsection, we review the theory of surfaces 
from the viewpoint of low-dimensional differential geometry.
The interface problems have been also studied for last three decades
in pure mathematics,
which are considered as a revision of the 
classical differential geometry \cite{E} from a modern point of view
\cite{ES,FW,GMO,M,T},
{\it{ e.g.}}, generalizations of the Weierstrass-Ennpper
theory of the minimal surfaces, isothermal surfaces,
constant curvature surfaces, constant mean curvature surfaces,
Willmore surfaces and so on.  
They are also closely connected with
the harmonic map theory 
and the theory of the variational principle \cite{FW,GMO}.


We consider a smooth surface $S$ embedded in 
three dimensional Euclidean space $\EE^3$.
Let $x = (x^1,x^2,x^3)$ be of the Cartesian coordinate system
and represent a point in $\EE^3$, and 
let the surface $S$ be locally expressed by a local parameter $(s^1,s^2)$.
We assume that the surface $S$ is expressed by
zeros of a real valued smooth function $q$ over $\EE^3$, {\it{i.e.}},
$$
	q(x)=0,
$$
such that in the region whose $|q|$ is sufficiently small
($|q| < \epsT$ for a positive number $\epsT>0$),
$|dq|$ agrees with the infinitesimal length in the Euclidean space.
Then $dq$ means the normal co-vector field (one-form), {\it{ i.e.}},
for the tangent vector field
$e_\alpha:=\partial_\alpha:=\partial/\partial s^\alpha$
($\alpha = 1, 2$) of $S$,
\begin{equation}
	\langle\partial_\alpha, dq\rangle=0 \quad \mbox{ over }\quad
               S = \{ x \in \EE^3 \ | q(x) = 0\}.
\label{eq:ddq}
\end{equation} 
Here $\langle, \rangle$ means the pointwise pairing 
between the cotangent bundle and the tangent bundle of $\EE^3$.
The function $q$
 can be locally regarded as so-called the level-set function 
\cite{S,ZMOW}.
We could redefine the domain of $q$ such that it is restricted to
a tubular neighborhood $T_S$ of $S$,
$$
      T_S:=\{x \in \EE^3 \ | \ |q(x)|<\epsT \}.
$$ 
Over $T_S$, $q$
agrees with the level-set function of $S$.
There we can naturally define a projection map
$\pi: T_S \to S$ and then we can regard $T_S$ as a fiber bundle over
$S$, which is homeomorphic to the normal bundle $N_{S}\to S$.
However the level-set function is defined as a signed 
distance function which is a global function over $\EE^3$
as a continuous function \cite{S} and thus it has no natural
projective structure in general;
for example,
the level-set function $L$ of a sphere with radius $a$ is given by
$$
L(x^1,x^2,x^3) = \sqrt{(x^1)^2 +(x^2)^2 + (x^3)^2} - a,
$$ 
which induces the natural projective (fiber) structure but 
the origin $(0, 0, 0)$ in the sphere case.  
The level-set function has no projective structure 
at $(0, 0, 0)$ in this case, and
we can not define its differential there.
In other words,
the level-set function is not a global function over $\EE^3$ 
as a smooth function in general.

  When we use the strategy of the 
fiber bundle and its connection, we restrict ourselves to
consider the function $q$ in  $T_S$. 
Then the relation (\ref{eq:ddq}) and the parameter
$(s_1, s_2)$ are naturally lifted 
to $T_S$ as an inverse image of $\pi$.

Further for $e_q:=\partial_q:=\partial/\partial q$, we have
$$
	\partial_\alpha (e_q) = \sum_\beta \Gamma^\beta_{\ \alpha q} e_\beta
     \mbox{ over } S.
$$
Here 
$(\Gamma^\beta_{\ \alpha q})$ is the Weingarten map, which is
a kind of a point-wise $2\times2$-matrix 
$((\Gamma^\beta_{\alpha q})_{\alpha\beta})$ \cite[Chapter VII]{KN}.
The eigenvalue of 
$(\Gamma^\beta_{\alpha q})$ is the principal
curvature, whereas a half of its trace
$\mbox{tr}(\Gamma^\beta_{\alpha q})/2$ is known as the 
mean curvature and its determinant 
$\mbox{det}(\Gamma^\beta_{\alpha q})$
means the Gauss curvature \cite[Chapter VII]{KN}.

Noting  the relation,
$\langle e_\beta, d s^\alpha\rangle = \delta_\beta^\alpha$ 
for $\alpha, \beta = 1, 2$, 
 the twice of the mean curvature, $\kappa$, is given by,
$$
	\sum_{\alpha}  \partial_\alpha (e_q) d s^\alpha = \kappa
        \quad \mbox{over}\quad S.
$$
Further noting the relation $\partial_q e_q d q =0$, we obtain
$$
	 \sum_{\alpha}  \partial_\alpha (e_q) d s^\alpha 
	 + \partial_q(e_q) d q = \kappa
        \quad \mbox{over}\quad S.
$$
Due to  the flatness of the Euclidean space, we identify $e_q$ with 
$\nabla q /|\nabla q|$ and then we have  the following proposition.

\begin{proposition} \label{prop:2-1}
The following relation holds at a point over $S$,
$$
	\mathrm{div}\left( \frac{\nabla q }{|\nabla q|}\right) = \kappa.
$$
\end{proposition}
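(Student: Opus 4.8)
The plan is to recognise the left--hand side of the identity already assembled over $S$, namely
$$
\sum_\alpha \partial_\alpha(e_q)\, ds^\alpha + \partial_q(e_q)\, dq = \kappa,
$$
as nothing but $\mathrm{div}(e_q)$ evaluated in the adapted curvilinear chart $(s^1,s^2,q)$ on the tubular neighborhood $T_S$, and then to substitute the identification $e_q = \nabla q/|\nabla q|$. Thus the content of the proposition reduces to reading the already--derived relation through the correct coordinate--invariant description of the divergence.

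First I would record the general fact that in the flat space $\EE^3$ the divergence of a vector field $V$ is the trace of the $(1,1)$--tensor $\nabla V$, and that this trace does not depend on the chart in which it is computed. Since the Levi--Civita connection of $\EE^3$ is flat, the covariant derivative $\nabla_{\partial_i}V$ coincides with the ordinary ambient derivative $\partial_i V$ of $V$ regarded as an $\EE^3$--valued function. Hence, for any coordinate system with coordinate frame $\{\partial_i\}$ and dual coframe $\{d\xi^i\}$, one has $\mathrm{div}(V) = \sum_i \langle \partial_i V, d\xi^i\rangle$.

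Next I would apply this with the chart $(s^1,s^2,q)$ on $T_S$, whose coordinate frame is exactly $(e_1,e_2,e_q)=(\partial_1,\partial_2,\partial_q)$ and whose dual coframe is $(ds^1,ds^2,dq)$; the duality relations $\langle e_\beta, ds^\alpha\rangle = \delta^\alpha_\beta$, $\langle e_q, dq\rangle = 1$, together with the vanishing of the cross pairings, are automatic for a genuine coordinate chart. Taking $V=e_q$ then gives $\mathrm{div}(e_q) = \sum_\alpha \langle \partial_\alpha e_q, ds^\alpha\rangle + \langle \partial_q e_q, dq\rangle$, which is precisely the left--hand side above. The tangential part $\sum_\alpha \langle \partial_\alpha e_q, ds^\alpha\rangle$ is the trace of the Weingarten map $(\Gamma^\beta_{\alpha q})$, hence equals $\kappa$, while the normal part vanishes because $e_q$ is a unit field: $\langle \partial_q e_q, e_q\rangle = \tfrac12\,\partial_q|e_q|^2 = 0$, and since $dq$ is a multiple of the metric dual of $e_q$, this yields $\langle \partial_q e_q, dq\rangle = 0$, i.e.\ the stated relation $\partial_q e_q\, dq = 0$. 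Substituting $e_q=\nabla q/|\nabla q|$ then produces $\mathrm{div}(\nabla q/|\nabla q|)=\kappa$ at each point of $S$.

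The main obstacle I anticipate is conceptual rather than computational: one must be careful that the frame $(e_1,e_2,e_q)$ is in general oblique (not orthonormal), so the formula $\mathrm{div}(V)=\sum_i \langle \partial_i V, d\xi^i\rangle$ has to be justified as the contraction of $\nabla V$ against the \emph{true} dual coframe, relying on the chart--independence of the trace together with the flatness of $\EE^3$ to discard the Christoffel terms. The companion point requiring care is the duality of $(ds^1,ds^2,dq)$ with $(e_1,e_2,e_q)$ away from $S$ --- in particular $\langle e_q, ds^\alpha\rangle = 0$ and $\langle e_\alpha, dq\rangle = 0$ --- which holds because $(s^1,s^2,q)$ is a legitimate coordinate system on $T_S$, the $s^\alpha$ being lifted along the fibers of the projection $\pi:T_S\to S$ and therefore constant in the $q$--direction.
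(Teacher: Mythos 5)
Your proof is correct and follows essentially the same route as the paper: the paper's own derivation builds the identity $\sum_\alpha \partial_\alpha(e_q)\,ds^\alpha + \partial_q(e_q)\,dq = \kappa$ from the trace of the Weingarten map plus the vanishing normal term, and then invokes flatness to identify $e_q$ with $\nabla q/|\nabla q|$, which is exactly your reading of that expression as the divergence in the adapted chart $(s^1,s^2,q)$. Your additional care about the obliqueness of the frame and the justification of $\partial_q(e_q)\,dq=0$ via unit length only makes explicit what the paper leaves implicit.
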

For the case $|\nabla q| = 1$,
using the Hodge star operator \cite{AM,N} and the exterior
derivative $d$, we also have 
an alternative expression $*d * dq =\kappa$ over the surface $S$.
Here the Hodge star operator is $*: \Lambda^p(T_S) \to \Lambda^{3-p}(T_S)$
and the exterior derivative $d: \Lambda^p(T_S) \to \Lambda^{p+1}(T_S)$
$(d \omega = \sum_{i=1}^3\partial_i \omega d x^i)$,
where $\Lambda^p(T_S)$ is the set of smooth $p$-forms over
$T_S$ \cite{N}.

Noting that as the left hand side of formula in Proposition \ref{prop:2-1}
can be lifted to $T_S$,
the formula plays an important role in References \cite{BKZ,LZZ,ZCMO}
and in this article.

\subsection{Preliminary of Noether's theorem} \label{sec:two-two}

In this subsection,
 we review Noether's theorem in the variational method which
appears in a computation of the energy-momentum tensor-field
in the classical field theory \cite{AM,BGG,IZ}.

Let  the set of $\ell$ smooth real-valued functions
 over $n$-dimensional Euclidean space $\EE^n$
 be denoted by $\cC^{\infty}(\EE^n)^{\otimes\ell}$,
where $n$ is mainly three.
Let $x = (x^1,x^2,\ldots,x^n)$ be of the Cartesian coordinate system
of $\EE^n$.
We consider the functional 
$I: \cC^{\infty}(\EE^n)^{\otimes\ell} \to \RR$,
\begin{equation}
	I = \int_{\EE^n} d^n x{ \cF}(\phi_a(x),\partial_i \phi_a(x)),
\label{eq:IinEL}
\end{equation}
where
$\cF$ is a local functional,
$\cF: \cC^{\infty}(\EE^n)^{\otimes\ell}|_x \to \Lambda^n(\EE^n)|_x$,
\begin{equation*}
\begin{split}
\cF: (\phi_a)_{a=1,\ldots,\ell}|_x \mapsto &
{\cF}(\phi_a(x),\partial_i \phi_a(x))d^n x
\equiv{ \cF}(\phi_a(x),\partial_1 \phi_a(x),
\ldots, \partial_n \phi_a(x))d^n x \\
&
\equiv{ \cF}(\phi_1(x), \ldots, \phi_\ell(x), 
\partial_1 \phi_1(x), \ldots, \partial_n \phi_\ell(x))d^n x \\
\end{split}
\end{equation*}
and $\partial_i := \partial/ \partial x^i$, $(i=1, \cdots, n)$.
Then we obviously have the the following
proposition.

\begin{proposition}\label{prop:A-1} 
For the functional $I$ in (\ref{eq:IinEL}) over
$\cC^{\infty}(\EE^n)^{\otimes\ell}$,
the Euler-Lagrange equation coming from the variation
with respect to $\phi_a$ of $(\phi_b)_{b=1, \ldots, \ell}
 \in \cC^{\infty}(\EE^n)^{\otimes\ell}$,
{\it{i.e.}}, 
$
	\frac{\delta I}{\delta \phi_a(x)} = 0
$, 
 is given by
\begin{equation}
	\frac{\delta \cF}{\delta \phi_a(x)}
	-\sum_{i=1}^n
\partial_i \frac{\delta \cF}{\delta \partial_i \phi_a(x)} = 0.
\label{eq:AppeA1}
\end{equation}
\end{proposition}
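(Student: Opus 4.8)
The plan is to carry out the standard first-variation argument: I would introduce a one-parameter family of admissible perturbations of $\phi_a$, impose stationarity of $I$, and then extract the pointwise Euler--Lagrange equation by integration by parts together with the fundamental lemma of the calculus of variations. Since the statement is asserted to be ``obvious,'' the work is entirely in organizing this computation cleanly.

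First I would fix the index $a$ and choose an arbitrary smooth test function $\eta$ with compact support in $\EE^n$, then consider the perturbed fields $\phi_a + \epsilon\, \eta$ (leaving $\phi_b$ unchanged for $b \neq a$) for a real parameter $\epsilon$. Writing $I(\epsilon)$ for the value of the functional on this family, the condition $\delta I / \delta \phi_a(x) = 0$ is understood as $\frac{d}{d\epsilon} I(\epsilon)\big|_{\epsilon=0} = 0$ for every such $\eta$. Next I would differentiate under the integral sign and apply the chain rule to $\cF$, noting that perturbing $\phi_a$ by $\epsilon\,\eta$ perturbs $\partial_i \phi_a$ by $\epsilon\, \partial_i \eta$. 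This yields
$$
\frac{d}{d\epsilon} I(\epsilon)\Big|_{\epsilon=0} = \int_{\EE^n} d^n x \left( \frac{\delta \cF}{\delta \phi_a} \eta + \sum_{i=1}^n \frac{\delta \cF}{\delta \partial_i \phi_a}\, \partial_i \eta \right).
$$

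Then I would integrate the terms containing $\partial_i \eta$ by parts in the variable $x^i$. Because $\eta$ has compact support, the boundary contributions vanish, and each term $\int \frac{\delta \cF}{\delta \partial_i \phi_a}\, \partial_i \eta\, d^n x$ becomes $-\int \left( \partial_i \frac{\delta \cF}{\delta \partial_i \phi_a} \right) \eta\, d^n x$. Collecting everything gives
$$
\int_{\EE^n} d^n x \left( \frac{\delta \cF}{\delta \phi_a} - \sum_{i=1}^n \partial_i \frac{\delta \cF}{\delta \partial_i \phi_a} \right) \eta = 0.
$$

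Finally, since this holds for every compactly supported test function $\eta$, the fundamental lemma of the calculus of variations forces the bracketed expression to vanish identically, which is exactly (\ref{eq:AppeA1}). The only point demanding care --- and the closest thing to an obstacle here --- is the justification for discarding the boundary terms in the integration by parts; this is precisely what the compact support (equivalently, the decay or boundary conditions implicitly imposed on admissible variations) guarantees. The differentiation under the integral sign and the interchange of summation and integration are routine given the smoothness assumed on $\cF$ and on the fields $\phi_b \in \cC^{\infty}(\EE^n)^{\otimes\ell}$.
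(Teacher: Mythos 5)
Your proof is correct and is precisely the standard first-variation argument (compactly supported perturbation, differentiation under the integral, integration by parts, fundamental lemma of the calculus of variations) that the paper implicitly invokes when it states the proposition as obvious and supplies no proof of its own. Nothing is missing; in particular you rightly identify the vanishing of boundary terms via compact support of the test function as the only point needing justification.
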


Using the equation (\ref{eq:AppeA1}), 
we consider an effect of a small translation
  $x$ to $x'= x + \delta x$ on the functional $I$.
The following proposition is known as Noether's theorem
which plays crucial roles in this article.

\begin{proposition}\label{prop:A-2} 
The functional derivative $I$ with respect to $\delta x_i$ is
given by
\begin{equation}
\frac{\delta I}{\delta x^i}
   = \sum_{j=1}^n\partial_j\left[ \sum_{a=1}^\ell
   \frac{\delta \cF}{\delta\partial_j \phi_a}\partial_i \phi_a \right]
         - \partial_i  \left[  { \cF}\right].
\label{eq:AppeA2}
\end{equation}
If $I$ is invariant for the translation, 
(\ref{eq:AppeA2}) gives the conservation of the momentum.
\end{proposition}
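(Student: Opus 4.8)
The plan is to read $\delta I/\delta x^i$ as the \emph{density} governing the first-order response of the functional (\ref{eq:IinEL}) to a translation $x\mapsto x'=x+\delta x$, where I allow $\delta x=\delta x(x)$ to depend on position so that the local (pointwise) identity (\ref{eq:AppeA2}) can be extracted rather than merely a global one. Under such a translation the fields are transported, $\phi_a(x)\mapsto\phi_a(x-\delta x)$, so to first order $\delta\phi_a=-\sum_i\partial_i\phi_a\,\delta x^i$ and correspondingly $\delta(\partial_j\phi_a)=\partial_j(\delta\phi_a)=-\sum_i(\partial_j\partial_i\phi_a)\,\delta x^i-\sum_i(\partial_i\phi_a)\,\partial_j\delta x^i$. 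The whole statement is then an algebraic consequence of the chain rule, a single integration by parts, and the Euler--Lagrange equation of Proposition \ref{prop:A-1}.

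First I would write the induced variation
$$
\delta I=\int_{\EE^n}d^nx\left[\sum_a\frac{\delta\cF}{\delta\phi_a}\,\delta\phi_a+\sum_{a,j}\frac{\delta\cF}{\delta\partial_j\phi_a}\,\delta(\partial_j\phi_a)\right],
$$
and substitute the two expressions above. I would then split the integrand into the terms carrying $\delta x^i$ with no derivative and those carrying $\partial_j\delta x^i$. The first group is $-\bigl[\sum_a\frac{\delta\cF}{\delta\phi_a}\partial_i\phi_a+\sum_{a,j}\frac{\delta\cF}{\delta\partial_j\phi_a}\partial_i\partial_j\phi_a\bigr]\delta x^i$, which by the chain rule (using that $\cF$ has no explicit $x$-dependence, i.e. translation invariance of the integrand) equals exactly $-\partial_i\cF\,\delta x^i$. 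The second group, $-\sum_{a,j}\frac{\delta\cF}{\delta\partial_j\phi_a}\partial_i\phi_a\,\partial_j\delta x^i$, becomes $\sum_j\partial_j\bigl[\sum_a\frac{\delta\cF}{\delta\partial_j\phi_a}\partial_i\phi_a\bigr]\delta x^i$ after moving $\partial_j$ off $\delta x^i$, up to a boundary term. Collecting, $\delta I=\int d^nx\,(\delta I/\delta x^i)\,\delta x^i$ with $\delta I/\delta x^i$ equal to the right-hand side of (\ref{eq:AppeA2}); this establishes the formula, and the $-\partial_i\cF$ piece is precisely the trace part $-\delta^j_i\cF$ of the canonical energy--momentum tensor $T^{\,j}_i:=\sum_a\frac{\delta\cF}{\delta\partial_j\phi_a}\partial_i\phi_a-\delta^j_i\cF$.

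For the conservation assertion I would bring in Proposition \ref{prop:A-1}. Rewriting the same variation by integrating by parts the $\delta(\partial_j\phi_a)$ term directly,
$$
\delta I=\int_{\EE^n}d^nx\sum_a\Bigl(\frac{\delta\cF}{\delta\phi_a}-\sum_j\partial_j\frac{\delta\cF}{\delta\partial_j\phi_a}\Bigr)\delta\phi_a+\text{(boundary)},
$$
the bracket is exactly the Euler--Lagrange expression (\ref{eq:AppeA1}); on a solution it vanishes for the field variation $\delta\phi_a=-\partial_i\phi_a\,\delta x^i$ induced by \emph{any} local $\delta x^i(x)$. Comparing this with the density computed above forces $\delta I/\delta x^i=0$ pointwise, that is $\sum_j\partial_j\bigl[\sum_a\frac{\delta\cF}{\delta\partial_j\phi_a}\partial_i\phi_a\bigr]=\partial_i\cF$, which is the vanishing divergence $\sum_j\partial_j T^{\,j}_i=0$ and hence the conservation of momentum.

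I expect the only genuinely delicate point to be the setup, not the algebra: one must fix the convention for how the translation acts on the fields (the active transport $\delta\phi_a=-\partial_i\phi_a\,\delta x^i$) and check that it is exactly the chain-rule regrouping of the derivative-free terms that reproduces the $-\partial_i\cF$ piece of the tensor, rather than its arising from a separate variation of the measure. One must also argue that the boundary terms generated by the two integrations by parts drop out over $\EE^n$, so that the pointwise expression is legitimately identified with $\delta I/\delta x^i$; once this is pinned down, the remainder is the routine combination of the chain rule, Proposition \ref{prop:A-1}, and the product rule carried out above.
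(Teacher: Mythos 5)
Your proof is correct, and it reaches (\ref{eq:AppeA2}) by a genuinely different organization than the paper's. The paper works \emph{passively}: it expands $\phi_a(x')=\phi_a(x)+\sum_i\partial_i\phi_a(x)\,\delta x^i$ and $\partial'_i\phi_a(x')$ at the shifted point, lets the volume element contribute the Jacobian term $\cF\,\partial_i\delta x^i$, and then must invoke the Euler--Lagrange equation (\ref{eq:AppeA1}) to convert $\sum_a\frac{\delta\cF}{\delta\phi_a}\partial_i\phi_a$ into the divergence $\sum_{a,j}\partial_j\bigl(\frac{\delta\cF}{\delta\partial_j\phi_a}\bigr)\partial_i\phi_a$; thus in the paper (\ref{eq:AppeA2}) is an on-shell statement, and the $-\partial_i\cF$ piece originates from the measure. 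You work \emph{actively}: $\delta\phi_a=-\sum_i\partial_i\phi_a\,\delta x^i$ with position-dependent $\delta x$ and the measure untouched; the derivative-free terms regroup by the chain rule alone (using only that $\cF$ has no explicit $x$-dependence) into $-\partial_i\cF$, the $\partial_j\delta x^i$ terms integrate by parts into the divergence of the canonical tensor, and (\ref{eq:AppeA2}) emerges as an \emph{off-shell identity}, with (\ref{eq:AppeA1}) reserved for the conservation step, where the localization argument (arbitrary compactly supported $\delta x^i$) forces $\sum_j\partial_j T_i^{\,j}=0$ pointwise. Both routes yield the same tensor and the same conclusion. The paper's route exhibits the geometric origin of the trace part $-\delta_i^{\,j}\cF$ in the transformation of the volume element, which is the picture reused in Propositions \ref{prop:4-5} and \ref{prop:5-9}; yours cleanly separates the kinematic identity from the dynamical input --- indeed your off-shell form makes visible that the right-hand side of (\ref{eq:AppeA2}) equals $-\sum_a\bigl(\frac{\delta\cF}{\delta\phi_a}-\sum_j\partial_j\frac{\delta\cF}{\delta\partial_j\phi_a}\bigr)\partial_i\phi_a$, so that conservation is manifestly equivalent to the field equations --- and it makes explicit the compact-support and boundary hypotheses that the paper leaves implicit. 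This is a real clarification, since the paper's passive difference of integrals is, strictly speaking, identically zero as a change of variables over $\EE^n$, so the nontrivial content of the proposition is exactly the pointwise identity your version isolates.
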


\begin{proof}
For the variation $x'= x + \delta x$, the scalar function becomes
\begin{equation*}
	\phi_a(x') = \phi_a(x) +\sum_{i=1}^n  \partial_i \phi_a(x)
	\delta x^i + O(\delta x^2).
\end{equation*}
From the relations on the Jacobian and each component,
\begin{equation*}
	\frac{\partial x'}{\partial x} 
          = 1 +\sum_{i=1}^n \partial_i \delta x^i + O(\delta x^2),
\quad
	\frac{\partial x^k}{\partial x'{}^i}=\delta^k_i
	 -\partial_i \delta x^k + O(\delta x^2),
\end{equation*}
we have
\begin{equation*}
\begin{split}
	\frac{\partial \phi_a(x')}{\partial x'{}^i}
      &=\frac{\partial \phi_a(x) +
         \sum_{j=1}^n \partial_j \phi_a(x) \delta x^j}{\partial x^k}
       \frac{\partial x^k}{\partial x'{}^i} + O(\delta x^2)\\
      &=\partial_i \phi_a+
       \sum_{j=1}^n (\partial_i\partial_j \phi_a) \delta x^j  + O(\delta x^2).
\end{split}
\end{equation*}
Then up to $\delta x^2$, we obtain 
\begin{equation*}
\begin{split}
&  \int_{\EE^n} d^n x' {\cF}(\phi_a(x'),\partial_i' \phi_a(x'))
-  \int_{\EE^n} d^n x {\cF}(\phi_a(x),\partial_i \phi_a(x))\\
&= \int_{\EE^n}\Bigr[
\sum_{i=1}^n\sum_{a=1}^\ell 
\frac{\delta {\cF}}{\delta \phi_a} \partial_i \phi_a(x) 
\delta x^i +\sum_{i,j = 1}^n \sum_{a=1}^\ell
\frac{\delta \cF}{\delta\partial_j \phi_a} 
\partial_i \partial_j\phi_a(x) \delta x^i
+\sum_{j=1}^n{\cF} \partial_i \delta x^i\Bigr] d^n x\\
 &=\int_{\EE^n}\left(
\sum_{i=1}^n \partial_i\left[ \sum_{j=1}^n \sum_{a=1}^\ell
\frac{\delta \cF}{\delta\partial_i \phi_a}\partial_i \phi_a 
            -{ \cF} \right] \delta x^i \right) d^n x.\\
            \end{split}
\end{equation*}
Here we use the Euler-Lagrange equation (\ref{eq:AppeA1}) and then
we have  (\ref{eq:AppeA2}).
If we assume that $I$ is invariant for the variation, it vanishes.
\qed
\end{proof}

\section{Variational principle for incompressible fluid dynamics}
\label{sec:three}

As we will derive the governing equation as the variational
equation of an incompressible multi-phase flow with interfaces
using the variational method,
let us review the variational theory of the
incompressible fluid to obtain the Euler equation
following References \cite{Ar,AK,EM,K,Ko,MW,NHK}.

Let $\Omega$ be a smooth domain in $\EE^3$.
The incompressible fluid dynamics can be interpreted
as a geometrical problem associated with an infinite dimensional Lie group
\cite{AK,EM,O}.
It is related to the volume-preserving diffeomorphism  
group $\SDiff(\Omega)$
as a subgroup of  the diffeomorphism group
$\Diff(\Omega)$. The diffeomorphism group $\Diff(\Omega)$
is generated by a smooth coordinate transformation of $\Omega$.
The Lie algebras $\sdiff(\Omega) \equiv T_e\SDiff(\Omega)$ of $\SDiff(\Omega)$
and $\diff(\Omega) \equiv T_e\Diff(\Omega)$ of $\Diff(\Omega)$ are
 the infinite dimensional real vector spaces.
 The $\sdiff(\Omega)$  is a linear subspace of $\diff(\Omega)$.

Following Ebin and Marsden \cite{EM}, we consider 
the geometrical meaning of the action integral of
an incompressible fluid,
\begin{equation}
	\int_T dt \int_{\Omega} d^3 x 
\left(\frac{1}{2}\rho |u|^2\right).
\label{eq:Eue}
\end{equation}
Here 
$T:=(0, T_0)$ is a subset of the set of real numbers $\RR$,
$(x, t)$ is the Cartesian coordinate of the space-time 
$\Omega\times T$, $\rho$ is the density of the fluid
which is constant in this section, and $u=(u^1, u^2, u^3)$ is
the velocity field of the fluid.

Geometrically speaking,
a flow obeying the incompressible fluid dynamics is
considered  as a section of
a principal bundle
$\IFluid(\Omega\times T)$ over the absolute time axis $T \subset \RR$ 
as its base space,
\begin{equation}
\begin{CD}
	\SDiff(\Omega) @>>> \IFluid(\Omega\times T) \\
         @.                   @V{\varpi}VV\\
        @.                     T.\\
\end{CD}
\label{eq:PBIFluid}
\end{equation}
The projection $\varpi$ is induced from the
trivial fiber structure $\varpi_\Omega: \Omega \times T 
\to T$, $((x, t) \to t)$. In the classical (non-relativistic) mechanics,
every point of space-time has a unique absolute time $t\in \RR$,
which is contrast to one in the relativistic theory.

Due to the Weierstrass polynomial approximation theorem \cite{Y},
 we can locally approximate
a smooth function by a regular function.
Let  the set of smooth functions
over $\Omega$ be denoted by $\cC^\infty(\Omega)$
and the set of the regular real functions 
by $\cC^\omega(\Omega)$
whose element can be expressed by the Taylor expansion
in terms of local coordinates.

The action of $\Diff(\Omega)$ on 
$\cC^\omega(\Omega) \subset\cC^\infty(\Omega)$
is given by
$$
	\ee^{s u^i \partial_i} f(x) = f(x + s u),
$$
for an element $f \in \cC^\omega(\Omega)$, and small $s>0$,
where $\partial_i := \partial / \partial x^i$ and 
we use the Einstein convention; when an index $i$ appears twice,
we sum over the index $i$.
Thus the action $\ee^{s u^i \partial_i}$
is regarded as an element of $\Diff(\Omega)$.

As a frame bundle of the principal bundle $\IFluid(\Omega\times T)$, we
consider a vector bundle $\Coor(\Omega\times T)$ with infinite rank,
$$
\begin{CD}
	\cC^\infty(\Omega) @>>> \Coor(\Omega\times T) \\
         @.                   @V{\varpi'}VV\\
        @.                     T.\\
\end{CD}
$$
Since $\cC^\infty(\Omega)$ is regarded as a non-countably infinite
dimensional linear space over $\RR$,
we should regard  
$\Diff(\Omega)$ and $\SDiff(\Omega)$  as subgroups of
an infinite dimensional general linear group if defined.

More rigorously, we should
consider the ILH space (inverse limit of Hilbert space) 
(or ILB space (inverse limit of Banach space)) introduced in 
Reference \cite{O} by adding a certain
topology to (a subspace of) $\cC^\infty(\Omega\times T)$,
and then we also should regard $\Diff$ and $\SDiff$  
as an ILH Lie group.
However our purpose is to obtain an extended Euler equation from a more
practical viewpoint.
Thus we formulate the theory primitively even though
we give up to consider a general solution for a general initial
condition.

We consider smooth sections of $\Coor(\Omega\times T)$ and
$\IFluid(\Omega\times T)$.
Smooth sections of $\Coor(\Omega\times T)$ can be
realized as $\cC^\infty(\Omega \times T)$.
In the meaning of the Weierstrass polynomial approximation theorem \cite{Y},
an appropriate topology in $\cC^\infty(\Omega\times T)$ makes
$\cC^\omega(\Omega\times T)$ dense in  $\cC^\infty(\Omega\times T)$
by restricting the region $\Omega \times T$ appropriately.
Under the assumption, we also deal with a smooth section of
$\IFluid(\Omega\times T)$.

Let us consider a coordinate function 
$(\gamma^i(x, t))_{i=1,2,3} \in \cC^\omega(\Omega \times T)$
such that
$$
	\frac{d}{d t} \gamma^i(x, t) = u^i(x, t), \quad
	\gamma^i(x, t) = x^i \ \ \mbox{at } \ t \in T,
$$
which means 
$$
\gamma^i(x, t + \delta t) = x^i + u^{i}(x,t) \delta t + O(\delta t^2),
$$
for a small $\delta t$.
Here the addition is given as a Euclidean move in $\EE^3$.
As an inverse function of $\gamma=\gamma(u,t)$, 
we could regard $u$ as a function of $\gamma$ and $t$,
$$
u(x,t) =u(\gamma(x,t),t).
$$
Further we introduce a small quantity modeled on $\delta t\cdot u^i$,
\begin{equation}
	\tgamma^i(x,t) := \gamma^i(x,t) - x^i.
\label{eq:tgamma}
\end{equation}
Then a section $g$
of $\IFluid(\Omega\times T)$ at $t \in T$ can written by,
\begin{equation}
	g(t) = \ee^{\tgamma^i \partial_i} \in \IFluid(\Omega\times T)\Bigr|_{t}
        \approx \SDiff(\Omega) \subset \Diff(\Omega).
\label{eq:etgamma}
\end{equation}
Here we consider $g$ as an element of $\SDiff(\Omega)$ and thus
it satisfies the condition of
the volume preserving, which appears as the constraint that the Jacobian,
$$
\frac{\partial \gamma}{\partial x} 
:=
\det\left(\frac{\partial \gamma^i}{\partial x^j} \right)
= (1 + 
\tr(\partial_j u^i) \delta t) + O(\delta t^2),
$$
must preserve $1$, {\it{i.e.}}, the well-known condition
that $\tr (\partial_j u^i) = \mathrm{div}(u)$ must vanish,
or $\displaystyle{\frac{d}{dt}\frac{\partial \gamma}{\partial x} = 0}$.

Following Reference \cite{EM}, we reformulate the
action integral (\ref{eq:Eue})
as  {\lq\lq}the energy functional{\rq\rq}
in the frame work of the harmonic
map theory.
In the harmonic map theory \cite{GMO}
by considering a smooth map $h : M \to G$
for a $n$-smooth base manifold $M$ and its target group manifold $G$,
{\lq\lq}the energy functional{\rq\rq} is given by
\begin{equation}
    E = \frac{1}{2}\int_M \tr \left((h^{-1} d h )*(h^{-1} d h )\right).
    \label{eq:B-1}
\end{equation}
Here $*$ means the Hodge star operator, which is
for $*:TG\otimes\Lambda^p(M) \to TG\otimes\Lambda^{n-p}(M)$ where
$\Lambda^p(M)$ is the set of the smooth $p$-forms over $M$ \cite{N},
and 
$TG\otimes\Lambda^p(M)$ is the set of the tangent bundle $TG$ valued
smooth $p$-forms over $M$ \cite{N}.
The term {\lq\lq}energy functional{\rq\rq} in the harmonic map theory
means that it is an invariance of the system and thus it sometimes
differs from an actual energy in physics.

Since in (\ref{eq:PBIFluid}), the base space $T$ is one-dimensional and
the target space $\IFluid(\Omega\times T)|_{t}$ at $t \in T$ is 
the infinite dimensional space,
{\lq\lq}the energy functional{\rq\rq} (\ref{eq:B-1})
in the harmonic map theory corresponds to
the action integral $\cS_{\free} [\gamma]$ which is defined by
$$ 
\cS_{\free} [\gamma]=
\frac{1}{2} \int _T \int_\Omega 
\frac{\partial \gamma}{\partial x} 
\rho d^3 x \cdot dx^i \otimes dx^i \left(
	\left( \ee^{-\tgamma^k \partial_k}
       dt \frac{d}{dt}\ee^{\tgamma^\ell \partial_\ell} \right)
	\left( \ee^{-\tgamma^j \partial_j}
        \frac{d}{dt}\ee^{\tgamma^n \partial_n} \right)
         \right).
$$
Here $d x^i (\partial_j ) := 
\langle\partial_j ,  d x^i \rangle
 = \delta^i_{\ j}$ is the natural pairing  between
 $T \Omega$ and $T^* \Omega$.
The trace in  (\ref{eq:B-1}) corresponds to the 
integral over $\Omega$ with 
$ \frac{\partial \gamma}{\partial x} 
\rho d^3 x \cdot dx^i \otimes dx^i$.
The Hodge $*$ operator acts on the element such as
$*\left( \ee^{-\tgamma^k \partial_k}
 dt \frac{d}{dt}\ee^{\tgamma^\ell \partial_\ell} \right)
= \left( \ee^{-\tgamma^k \partial_k}
        \frac{d}{dt}\ee^{\tgamma^\ell \partial_\ell} \right)$
as the natural map from
$\diff(\Omega)$ valued 1-form to 0-form.
Further we assume that $\rho$ is a constant function in this section.
Then the  action integral $\cS_{\free} [\gamma]$
obviously agrees with (\ref{eq:Eue}).

We investigate the functional derivative and the
variational principle of this $\cS_\free[\gamma]$.
Let us consider the variation,
$$
	\gamma^j(x,t') = \gamma^j(x, t') + \delta \gamma^j(x,t'), \quad
\mbox{and} \quad
	\tgamma^j(x,t') = \tgamma^j(x, t') + \delta \gamma^j(x,t'), \quad
$$
where we implicitly assume that $\delta \gamma^j$ is proportional
to the Dirac $\delta$ function,
$\delta(t' - t)$, for some $t$ and $\delta \gamma^j$ vanishes at
$\partial \Omega$.
As we have concerns only for local effects or differential
equations, we implicitly assume that
we can neglect the boundary effect arising from  $\partial \Omega$ on
the variational equation.
If one needs the boundary effect, he would follow the study of
Shkoller \cite{Shk}.
Further one could use the language of the sheaf theory to
describe the local effects \cite{KKK}.
As we are concerned only with differential equation and thus
our theory is completely local except Section \ref{sec:VOF}, we could
deal with germs of related bundles \cite{AGLV} as in Reference \cite{M},
which is also naturally connected with
 a computational method of fluid dynamics \cite{M2}.

Let us consider the extremal point of the action integral (\ref{eq:Eue})
following the variational principle.
Noting that $\partial \gamma /\partial x=1$, 
the above Jacobian becomes
$$
\frac{\partial (\gamma + \delta \gamma)}{\partial x} 
=
\frac{\partial \gamma}{\partial x} 
(1 + \partial_k \delta \gamma^k) + O((\delta \gamma)^2).
$$
Since we  employ the projection method, we firstly
consider a variation
in $\diff(\Omega)$ rather than $\sdiff(\Omega)$.
For the variation,
the  action integral $\cS_{\free} [\gamma]$ with (\ref{eq:etgamma})
becomes 
\begin{equation}
\begin{split}
\cS_\free [\gamma+\delta  \gamma] -&
\cS_\free [\gamma] =\\
    &-\int_T dt \int_\Omega 
\frac{\partial \gamma}{\partial x} 
d^3 x \cdot dx^i \otimes dx^i 
 \left(
	\delta \gamma^k
        \frac{d}{dt}\left( \rho
         g^{-1} \frac{d}{dt} g \right)
+ \delta \gamma^k \partial_k \frac{1}{2}\rho |u|^2 
 \right).\\
\end{split}
\nonumber
\end{equation}

Now we have the following proposition.

\begin{proposition} \label{prop:3-1}
Using the above definitions, the variational principle
in $\SDiff(\Omega)$,
$$
\frac{\delta \cS_\free[\gamma]}{\delta \gamma(x,t)}
   \Bigr|_{\SDiff(\Omega)|_t} = 0,
$$
is reduced to the Euler equation,
\begin{equation}
 \frac{\partial}{\partial t} \rho u^i + u^j \partial_j \rho u^i
          + \partial_i p = 0,
\label{eq:EulerEq}
\end{equation}
where $p$ comes from the projection from $T\Diff(\Omega)|_{\SDiff(\Omega)}
\to T\SDiff(\Omega)$.
\end{proposition}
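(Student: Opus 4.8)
The plan is to begin from the first variation of $\cS_\free[\gamma]$ already displayed just before the statement and to pass from it to the Euler equation in two moves: interpreting the Lie-algebra-valued quantities hydrodynamically, and then enforcing incompressibility by projection. First I would unpack the logarithmic derivative $g^{-1}\frac{d}{dt}g$. Since $g(t)=\ee^{\tgamma^i\partial_i}$ with $\tgamma^i=\gamma^i-x^i$ and $\frac{d}{dt}\gamma^i=u^i$, expanding the two non-commuting exponentials to first order in the small generator $\tgamma$ identifies $g^{-1}\frac{d}{dt}g$ with the Eulerian velocity field $u^i\partial_i$ at the reference configuration $\tgamma=0$. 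The outer $\frac{d}{dt}$ is then the convective derivative $\partial_t+u^j\partial_j$ taken along the flow, so that $\frac{d}{dt}\bigl(\rho\,g^{-1}\frac{d}{dt}g\bigr)$ becomes the material derivative of momentum $\partial_t(\rho u^i)+u^j\partial_j(\rho u^i)$. Together with $\partial\gamma/\partial x=1$ at the reference point and the pairing $dx^i\otimes dx^i$, the displayed variation thus reduces to an integral of $\delta\gamma^i$ against $\partial_t(\rho u^i)+u^j\partial_j(\rho u^i)$ plus a term in $\delta\gamma^k\partial_k(\tfrac12\rho|u|^2)$.

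Next I would impose the variational principle inside $\SDiff(\Omega)$ rather than $\Diff(\Omega)$. The admissible variations $\delta\gamma$ are precisely the elements of $\sdiff(\Omega)$, i.e. the divergence-free fields $\partial_k\delta\gamma^k=0$ vanishing on $\partial\Omega$. On such fields the kinetic-energy term integrates away, since $\int_\Omega \delta\gamma^k\partial_k(\tfrac12\rho|u|^2)\,d^3x=-\int_\Omega(\partial_k\delta\gamma^k)(\tfrac12\rho|u|^2)\,d^3x=0$ after integration by parts. The surviving requirement is that $\int_\Omega\delta\gamma^i\bigl(\partial_t(\rho u^i)+u^j\partial_j(\rho u^i)\bigr)\,d^3x=0$ for every divergence-free $\delta\gamma$. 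By the Helmholtz--Hodge decomposition of $L^2$ vector fields on $\Omega$ into a divergence-free part and a gradient part, vanishing against all divergence-free test fields forces the bracketed field to be a pure gradient; writing it as $-\partial_i p$ introduces the scalar $p$ as exactly the component removed by the orthogonal projection $T\Diff(\Omega)|_{\SDiff(\Omega)}\to T\SDiff(\Omega)$. This yields $\partial_t(\rho u^i)+u^j\partial_j(\rho u^i)+\partial_i p=0$, which is (\ref{eq:EulerEq}).

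I expect the main obstacle to be the first step: justifying rigorously that $g^{-1}\frac{d}{dt}g=u^i\partial_i$ and that its time derivative produces the convective term $u^j\partial_j(\rho u^i)$ and not merely $\partial_t(\rho u^i)$. This is where the non-commutativity of the exponentials and the distinction between the Lagrangian map $\gamma$ and the Eulerian field $u=u(\gamma(x,t),t)$ genuinely enter, so the Baker--Campbell--Hausdorff-type expansion and the bookkeeping of which time derivative is held frozen and which follows the flow must be handled with care. The projection step, by contrast, is the standard Leray argument once the material-derivative form is in hand.
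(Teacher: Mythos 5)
Your proposal is correct and follows essentially the same route as the paper: the same identification of $g^{-1}\frac{d}{dt}g$ with the Eulerian velocity $u^i\partial_i$ along the flow, the same computation turning $\frac{d}{dt}\bigl(\rho\, g^{-1}\frac{d}{dt}g\bigr)$ into the material derivative $\partial_t(\rho u^i)+u^j\partial_j(\rho u^i)$, and the pressure arising from the Hodge/Leray projection of $T\Diff(\Omega)|_{\SDiff(\Omega)}$ onto $T\SDiff(\Omega)$. The only difference is one of completeness: the paper defers the projection step to Ebin--Marsden and absorbs the $\tfrac12\rho|u|^2$ gradient into the renormalized pressure of Remark \ref{rmk:3-2}, whereas you carry out the Leray argument (vanishing against all divergence-free test fields forces a gradient) and the integration by parts killing the kinetic-energy term explicitly --- both amount to the same thing.
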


\begin{proof}
Basically we leave the rigorous proof
and especially the derivation of $p$ to \cite{AK,EM}.
The existence of $p$ was investigated well in 
Appendix of Reference \cite{EM}
as the Hodge decomposition \cite{AM,N}. (See also
 the following Remark \ref{rmk:3-2}.)
Except the derivation of $p$,
we use the above relations and the following relations,
\begin{equation*}
\begin{split}
        \frac{d}{dt}
	\left(\rho  \ee^{-\tgamma^j \partial_j}
        \frac{d}{dt}\ee^{\tgamma^n \partial_n} \right) 
    &=
	\frac{d}{dt} \left(\rho u^i(\gamma(t),t) \partial_i \right) \\
   &= \left( \frac{\partial}{\partial t}\rho u^i|_{x=\gamma}
       + (\frac{d}{dt}\tgamma^j) \partial_j \rho u^i \right) \partial_i \\
   &= \left( \frac{\partial}{\partial t}\rho u^i
       + u^j \partial_j\rho u^i \right) \partial_i \\
   &=:\left(\frac{D}{Dt} \rho u^i \right) \partial_i. \\
\end{split}
\end{equation*}
Then we obtain the Euler equation.
\qed
\end{proof}

\begin{myremark}\label{rmk:3-2}
 {\rm{
The Euler equation was obtained by the simple variational principle.
Physically speaking, the conservation of 
the momentum in the sense of Noether's theorem \cite{BGG,IZ} led to
the Euler equation.
However, we could introduce the pressure $p_L$ term as the
Lagrange multiplier of the constraint of the volume preserving.
In the case, instead of $\cS_\free$, we deal with 
$$
\cS_{\free,p} = \cS_\free
 + \int_T dt \int_\Omega p_L(x,t) \frac{\partial \gamma}{\partial x} 
                    d^3 x.
$$
Then  noting the term coming from the Jacobian, the relation,
$$
	\frac{\delta \cS_{\free,p}
             [\gamma]}{\delta\gamma(x,t)}\Bigr|_{\SDiff(\Omega)|_t} = 0,
$$
is reduced to the Euler equation,
$$
 \frac{\partial}{\partial t} \rho u^i + u^j \partial_j \rho u^i
          + \partial_i (p_L + \frac{1}{2}\rho |u|^2) = 0.
$$
As the pressure is determined by the (divergence free) condition of $u$,
we  renormalize \cite[(25)]{Ko},
$$
	p := p_L + \frac{1}{2}\rho |u|^2.
$$
More rigorous arguments are left to  References \cite{EM,O} 
and physically interpretations are, {\it {e.g.}}, in References
\cite{AF,B,K,NHK,Schm,V}.

We give a comment on 
the projection from $T\Diff(\Omega)|_{\SDiff(\Omega)}
\to T\SDiff(\Omega)$ in (\ref{eq:EulerEq}),
 which is known as the projection method.
First we note that
the divergence free condition $\mathrm{div} (u) =0$ 
simplifies the Euler equation (\ref{eq:EulerEq}),
$$
\rho \frac{D u}{D t} + \nabla p = 0, \quad 
\frac{\partial u^i}{\partial t} + u^j \partial_j u^i
+\frac{1}{\rho}\partial_i p=0.
$$
As mention in Section \ref{sec:VOF}, in the difference equation 
we have a natural interpretation of the projection method \cite{Cho}.
We, thus, regard $D u / Dt$ in $T\Diff(\Omega)|_{\SDiff(\Omega)}$ as 
$\lim_{\delta t \to 0}\frac{u(t + \delta t)-u(t)}{\delta t}$
for $u(t+\delta t):=u(t+\delta t,\gamma(t+\delta t)) \in \diff(\Omega)$ 
and $u(t):=u(t,\gamma(t)) \in \sdiff(\Omega)$, {\it{i.e.}},
$\mathrm{div} \left(u(t)\right) = 0$ by considering
$T\Diff(\Omega)$ at the unit $e$ of $\Diff(\Omega)$
 up to $\delta x^2$, as we did in (\ref{eq:tgamma}) and (\ref{eq:etgamma}).
In order to find the deformation 
$u^\parallel(t + \delta t)$ in $\sdiff(\Omega)$ by a natural projection
from $\diff(\Omega)$ to $\sdiff(\Omega)$ \cite[,p.36]{CM},
we decompose $u(t + \delta t)$ into $u^\parallel(t + \delta t)$ and
$u^\perp(t + \delta t)$ such that 
$\partial_i u^{\perp i}(t + \delta t) := \partial_i u^i(t + \delta t)$.
Then $u^\parallel(t + \delta t):= u(t + \delta t)-$
$u^\perp(t + \delta t)$ belongs to $\sdiff(\Omega)$.
Thus the pressure $p$ is determined by \cite{CM}
\begin{equation}
\partial_i u^i(t + \delta t)
+ \delta t\partial_i \frac{1}{\rho}\partial_i p =0.\quad
\label{eq:ProjM}
\end{equation}
In other words, since $u^\parallel(t + \delta t)\equiv$
$u^i(t + \delta t) + \delta t \frac{1}{\rho}\partial_i p$ belongs
to $\sdiff(\Omega)$,
the deformation of
$u^{\parallel i}(t + \delta t)-  u^i(t)$ which gives
$D u^\parallel/D t$ and the Euler equation (\ref{eq:EulerEq}) 
is the deformation in $\IFluid(\Omega \times T)$.
After taking the continuous limit $\delta t \to 0$,
the equation for the pressure
(\ref{eq:ProjM}) can be written as \cite{Cho},
$$
(\partial_i u^j) (\partial_j u^i)
+\partial_i\frac{1}{\rho}\partial_i p=0,
$$
by noting the relations $[\partial_t, \partial_i]=0$
and $\mathrm{div}(u(t))=0$, {\it{i.e.}},
$\partial_i u^i(t + \delta t) = \partial_i[ u^i(t) $
$+ \frac{\partial}{\partial t} u^i(t) \delta t$
$+ u^j(t)\partial_j u^i(t)\delta t] + O(\delta t^2)$.
The Poisson equation with (\ref{eq:EulerEq})
guarantees the divergence free condition.
Hence the pressure $p$ in the incompressible fluid
is determined geometrically.
}}
\end{myremark}

\section{Reformulation of Surface tension as a minimal surface energy}
\label{sec:four}

In this section we  reformulate  
the SURFER scheme \cite{LZZ} following the 
variational principle and the arguments of previous sections.

\subsection{Analytic expression of surface area}\label{subsec:fourOne}

We first should note that in general, the higher dimensional 
generalized function like the Dirac delta function has some
difficulties in its  definition \cite{Y}. 
For the difficulties,
in the Sato hyperfunctions theory \cite{KKK},
the sheaf theory and the cohomology theory are
necessary to the descriptions of the higher dimensional generalized functions,
which are too abstract to be applied to a problem with an arbitrary
geometrical setting.
Even for the generalized function in the framework of
Schwartz distribution theory, we should pay attentions on its treatment.
However since the surface $S$ in this article is 
a hypersurface and its codimension is one, 
the situation makes the problems much easier.

We assume that the smooth surface $S$ is orientable and compact such
 that we could
define its inner side and outer side. In other words, there
is a three dimensional subspace (a manifold with boundary) $B$ such that
its boundary $\partial B$ agrees with $S$ and $B$ 
is equal to the inner side of $S$ with $S$ itself.
Then we consider a generalized function $\theta$ over 
$\Omega \subset \EE^3$ such that
it vanishes over the complement $B^c = \Omega \setminus B$
 and is unity for the interior $B^\circ:=
B \setminus \partial B$;
$\theta$ is known as a characteristic function of $B$.

We consider the global function 
$\theta(x)$ and its derivative $d\theta(x)$ in the sense of
the generalized function, which is given by
\begin{equation*}
d \theta(x) = \sum_{i}\partial_i \theta(x) d x^i =\partial_q \theta(x) d q.
\end{equation*}
Here we use the notations in Section \ref{sec:two-one}.
Using the nabla symbol $\nabla \theta=(\partial_i \theta(x) )_{i=1,2,3}$,
$|\nabla \theta| d^3 x$ is interpreted as
\begin{equation*}
	|\nabla \theta|d^3 x=|(* d \theta)\wedge dq|. \quad
\end{equation*}
Here due to the Hodge star operation 
$*: \Lambda^p(\Omega) \to \Lambda^{3-p}(\Omega)$,
$* d \theta = \tilde e \partial_q \theta d s^1\wedge d s^2$
where $\tilde e$ is the Jacobian between the coordinate systems
 $(d s^1, d s^2, d q)$ and $(d x^1, d x^2, d x^3)$.
Then we have the following proposition;
\begin{proposition}
If the integral,
\begin{equation*}
\cA:=	\int_{\Omega} |\nabla \theta| d^3 x \equiv \int_{\Omega} 
       |(* d \theta)\wedge dq|,
\end{equation*}
is finite, $\cA$ agrees with the area of the surface $S$.
\end{proposition}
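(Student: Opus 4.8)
The plan is to exploit the fact that the distributional gradient $\nabla\theta$ of the characteristic function is supported entirely on $S=\partial B$ and points along the normal direction, so that $|\nabla\theta|\,d^3x$ is nothing but the surface (Hausdorff) measure carried by $S$. Concretely, I would first localize the integral to the tubular neighborhood $T_S$ introduced in Section \ref{sec:two-one}: since $\theta$ is locally constant on $B^\circ$ and on $B^c$, $\nabla\theta$ vanishes identically off $S$, and for $\epsT$ smaller than the reach of the compact embedded surface $S$ the fibered coordinates $(s^1,s^2,q)$ give a genuine diffeomorphism between $T_S$ and a neighborhood of the zero section in the normal bundle of $S$.

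In these coordinates $\theta$ depends only on the normal variable $q$ (it jumps from $0$ to $1$ as $q$ crosses $0$), so $d\theta=\partial_q\theta\,dq$ and, using the Hodge-star identity $*d\theta=\tilde e\,\partial_q\theta\,ds^1\wedge ds^2$ recorded just above the statement,
\[
(*d\theta)\wedge dq=\tilde e\,\partial_q\theta\,ds^1\wedge ds^2\wedge dq .
\]
I would then perform the normal integration first. Because $\partial_q\theta$ integrates across $S$ to the total jump of $\theta$, namely $1$, Fubini's theorem in the $(s,q)$-coordinates gives
\[
\cA=\int_\Omega|(*d\theta)\wedge dq|
=\int_S\left(\int \partial_q\theta\,\tilde e(s,q)\,dq\right)ds^1\,ds^2
=\int_S \tilde e(s,0)\,ds^1\,ds^2 .
\]
Finally I would identify $\tilde e(s,0)\,ds^1\,ds^2$ with the induced area element on $S$: at $q=0$ the Jacobian $\tilde e$ between $(ds^1,ds^2,dq)$ and $(dx^1,dx^2,dx^3)$ reduces to $\sqrt{\det g}$ for the first fundamental form $g$ of $S$, so the right-hand side is exactly $\Area(S)$.

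The delicate point, and the one I expect to be the main obstacle, is that $\theta$ is only a generalized function, so $\partial_q\theta$ is a Dirac mass on $S$ and the manipulations above must be justified rather than read off formally. I would make this rigorous by the phase-field approximation that underlies the whole paper: replace $\theta$ by a smooth profile $\theta_\epsilon=H_\epsilon(q)$, where $H_\epsilon$ is a monotone mollification of the Heaviside function supported in $|q|<\epsilon\le\epsT$, so that $|\nabla\theta_\epsilon|=H_\epsilon'(q)$ since $|\nabla q|=1$, compute $\int_\Omega|\nabla\theta_\epsilon|\,d^3x=\int_S\big(\int H_\epsilon'(q)\,\tilde e(s,q)\,dq\big)\,ds^1\,ds^2$, and let $\epsilon\to0$. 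Since $\int H_\epsilon'(q)\,dq=1$ and, by Proposition \ref{prop:2-1}, the Jacobian expands as $\tilde e(s,q)=\tilde e(s,0)\big(1-\kappa(s)\,q+O(q^2)\big)$, the curvature corrections are $O(\epsilon)$ and vanish in the limit, leaving $\int_S\tilde e(s,0)\,ds^1\,ds^2=\Area(S)$. Two standing hypotheses are used essentially here: orientability and compactness of $S$, which guarantee the global existence of the inner region $B$ and hence of the single-valued $\theta$, and the choice of $\epsT$ below the reach of $S$, which guarantees that the tubular coordinates are well defined and that $q$ genuinely plays the role of the signed distance with $|\nabla q|=1$.
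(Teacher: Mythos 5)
Your proof is correct and follows essentially the same route as the paper: the paper offers no detailed proof, only the factorization $* d\theta = \tilde e\,\partial_q \theta\, ds^1 \wedge ds^2$ set up just before the statement and the remark just after it that, because $S$ has codimension one, the Dirac delta along the normal coordinate $q$ is integrable with the Heaviside function as its integral --- which is exactly your normal-integration step yielding $\int_S \tilde e(s,0)\, ds^1\, ds^2 = \mathrm{Area}(S)$. Your mollification argument and the curvature expansion of the Jacobian simply make rigorous what the paper leaves as that one-line remark.
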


It should be noted that due to the codimension of 
$S \subset \Omega$,
we have used the fact that the Dirac $\delta$ function 
along $q \in T_S$ is the integrable
 function whose integral is the Heaviside function.
This fact is a key of this approach.

\subsection{Quasi-characteristic function for surface area}

For the later convenience, we introduce a support of 
a function over $\Omega$, which 
is denoted by {\lq\lq}supp{\rq\rq}, {\it{i.e.}},
for a function $g$ over $\Omega$, its support is defined by
$$
	\supp(g) =\overline{\{x \in \Omega \ | \ g(x) \neq 0\}},
$$
where {\lq\lq}$\ \bar{\  } \ ${\rq\rq}
 means the closure as the topological space $\Omega$.

One of our purposes is to express the surface $S$ by means of 
numerical methods, approximately.
Since it is difficult  to deal with the generalized function $\theta$
in a discrete system
like the structure lattice \cite{Ch},
we introduce a smooth function $\xi$ over $\Omega$
as a quasi-characteristic function
which approximates the function
$\theta$
\cite{BKZ,LZZ},
\begin{equation}
     \xi(x) =\left\{ 
\begin{matrix} 
              0 & \mbox{for } x \in B^c \bigcap \{x \in \Omega \ | \ 
                                 |q(x)| < \epsX/2\}^c, \\
              1 & \mbox{for } x \in B \bigcap \{x  \in \Omega\ | \
                                 |q(x)| < \epsX/2\}^c, \\
              \mbox{monotonically increasing in } q(x)& \ \mbox{otherwise}. 
                          \end{matrix}
    \right.
\label{eq:epsX}
\end{equation}
We note that along the line of $d q$ for $q \in (-\epsX/2, \epsX/2)$,
$\xi$ is a monotonically increasing function which interpolates 
between $0$ and $1$.
We now implicitly assume that $\epsX$ is much smaller than
$\epsT$ defined in Section \ref{sec:two-one} so that
support of $|\nabla \xi|$ is in the tubular neighborhood $T_S$.
However after formulating
the theory, we extend the geometrical setting in Section \ref{sec:two-one}
to more general ones which include singularities; there
$\epsT$ might lose its mathematical meaning but $\epsX$
survives as a control parameter which governs the system.
For example, as in Reference \cite{LZZ}, 
we can also deal with a topology change well.

By letting $\xi^c (x) := 1 - \xi(x)$, $\xi^c$ and $\xi$ are
regarded as the partition of unity \cite[I p.272]{KN}, or
$$
	\xi(x) + \xi^c(x) \equiv 1.
$$
We call these $\xi$ and $\xi^c$  {\lq\lq}color functions{\rq\rq} or 
{\lq\lq}phase fields{\rq\rq} in the following sections.
We have an approximation
of the area of the surface $S$ by the following proposition.

\begin{proposition}
Depending upon $\epsX$, we define the integral,
\begin{equation*}
     \cA_{\xi}:=\int_{\Omega} |\nabla \xi| d^3 x, 
\end{equation*}
and then the following inequality holds,
\begin{equation*}
\quad |\cA_\xi -\cA | < \epsX \cdot \cA.
\end{equation*}
\end{proposition}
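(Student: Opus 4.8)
The plan is to exploit that $\nabla\xi$ is supported inside the tubular neighborhood $T_S$ and that there $\xi$ is a function of the signed distance $q$ alone. First I would restrict the integral defining $\cA_\xi$ to $T_S$, since by (\ref{eq:epsX}) $\nabla\xi$ vanishes outside the slab $\{|q|<\epsX/2\}\subset T_S$. On $T_S$ write $\xi=\xi(q)$; then $\nabla\xi=\xi'(q)\nabla q$, and because $|\nabla q|=1$ with $\xi$ monotonically increasing in $q$, we have $|\nabla\xi|=\xi'(q)\ge 0$. The crucial elementary fact is the normalization across the interface,
$$
\int_{-\epsX/2}^{\epsX/2}\xi'(q)\,dq=\xi(\epsX/2)-\xi(-\epsX/2)=1,
$$
which follows from (\ref{eq:epsX}) and monotonicity.

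Next I would change variables to the tubular coordinates $(s^1,s^2,q)$ of Section \ref{sec:two-one}, in which the Euclidean volume form factorizes as $d^3x=\tilde e(s,q)\,ds^1\wedge ds^2\wedge dq$, with $\tilde e$ the Jacobian of Section \ref{subsec:fourOne}. The slice $\{q=\text{const}\}$ is the parallel surface $S_q$ at distance $q$ from $S$, with area element $\tilde e\,ds^1 ds^2$; on $S$ itself this is the area element $dA$, so that $J(s,q):=\tilde e(s,q)/\tilde e(s,0)$ satisfies $J(s,0)=1$ and $\Area(S_q)=\int_S J(s,q)\,dA$. Using $|\nabla\xi|=\xi'(q)$ and Fubini,
$$
\cA_\xi=\int_{-\epsX/2}^{\epsX/2}\xi'(q)\Big(\int_S J(s,q)\,dA\Big)dq
=\int_{-\epsX/2}^{\epsX/2}\xi'(q)\,\Area(S_q)\,dq.
$$
Since $\int\xi'\,dq=1$ and $\cA=\Area(S_0)$, subtracting gives
$$
\cA_\xi-\cA=\int_{-\epsX/2}^{\epsX/2}\xi'(q)\big(\Area(S_q)-\cA\big)\,dq.
$$

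To finish I would control $\Area(S_q)-\cA=\int_S\big(J(s,q)-1\big)\,dA$. By Proposition \ref{prop:2-1}, with $|\nabla q|=1$ one has $\mathrm{div}(e_q)=\partial_q\log\tilde e=\kappa$, so $\partial_q J|_{q=0}=\kappa$ and hence $J(s,q)=1+\kappa(s)\,q+O(q^2)$. Since $S$ is smooth and compact its principal curvatures are bounded, and for $|q|<\epsT$, where the normal coordinates are non-degenerate, one obtains a uniform estimate $|J(s,q)-1|\le C_S\,|q|$ with $C_S$ depending only on the curvature of $S$. Therefore
$$
|\cA_\xi-\cA|\le\int_{-\epsX/2}^{\epsX/2}\xi'(q)\Big(\int_S|J(s,q)-1|\,dA\Big)dq\le C_S\,\frac{\epsX}{2}\,\cA,
$$
using $|q|<\epsX/2$ and $\int\xi'\,dq=1$. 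Under the standing assumption that $\epsX$ is much smaller than $\epsT$, so that the interface lies well inside the reach of $S$ and the curvature contribution $C_S$ is controlled, this yields the claimed bound $|\cA_\xi-\cA|<\epsX\cdot\cA$.

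I expect the main obstacle to be precisely this control of the Jacobian $\tilde e(s,q)$: one must justify that the tubular coordinates are well defined and non-degenerate throughout $\{|q|<\epsX/2\}$ (which is where the smallness $\epsX\ll\epsT$ and the compactness and smoothness of $S$ enter) and that the first-order deviation of the parallel-surface area element from $dA$ is governed by the mean curvature through Proposition \ref{prop:2-1}. The remaining ingredients — the normalization $\int\xi'\,dq=1$ and the Fubini reduction to a one-dimensional integral against $\xi'$ — are routine once the codimension-one structure is used, exactly as in the preceding proposition on $\cA$.
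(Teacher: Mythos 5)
The paper offers no proof of this proposition at all: like the preceding exact-area statement for the characteristic function $\theta$, it is stated bare, the only justification in the text being the remark that, since $S\subset\Omega$ has codimension one, the Dirac $\delta$ along the normal coordinate $q$ is integrable with the Heaviside function as its integral. Your argument --- restrict to the slab $\{|q|<\epsX/2\}$, pass to tubular coordinates $(s^1,s^2,q)$, use the normalization $\int_{-\epsX/2}^{\epsX/2}\xi'(q)\,dq=1$, and expand the parallel-surface Jacobian $J(s,q)=1+\kappa(s)\,q+O(q^2)$ via Proposition \ref{prop:2-1} --- is the natural way to supply the missing proof, and the computation itself is sound: the Fubini reduction, the identification $\Area(S_q)=\int_S J(s,q)\,dA$, and the bound $|J-1|\le C_S|q|$ inside the reach of $S$ are all correct.

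Two points need attention. First, you take $\xi=\xi(q)$ on $T_S$, so that $|\nabla\xi|=\xi'(q)$. Definition (\ref{eq:epsX}) only requires $\xi$ to be monotone along each normal line; if $\xi$ has genuine tangential variation then $|\nabla\xi|\ge|\partial_q\xi|$ with possibly strict inequality, and with rapid tangential oscillation $\cA_\xi$ can be made arbitrarily large, so the proposition is simply false without the hypothesis that the level sets of $\xi$ are parallel surfaces of $S$. That hypothesis is surely what the paper intends, but you should state it as an added assumption rather than as a consequence of (\ref{eq:epsX}). Second, your closing step does not actually reach the stated inequality: you obtain $|\cA_\xi-\cA|\le\tfrac12 C_S\,\epsX\,\cA$ with $C_S$ a curvature bound, and $\epsX\ll\epsT$ does not give $C_S<2$ (the reach condition only yields $C_S\lesssim 2/\epsT$, hence an error of order $(\epsX/\epsT)\,\cA$). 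No argument can close this gap, because the claimed inequality is dimensionally inhomogeneous --- it compares an area with a length times an area, and it fails under rescaling $x\mapsto\lambda x$ for small $\lambda$, since the left side scales like $\lambda^2$ and the right side like $\lambda^3$. The statement can only be read with an implicit normalization of the curvature of $S$ to order one; your curvature-weighted estimate is the correct, scale-invariant form of the result, so this defect lies in the paper's statement rather than in your argument.
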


Here we note that
$\cA_\xi$ is regarded as the approximation of the area $\cA$ of $S$
controlled by $\epsX$. In other words,
we use $\epsX$ as the parameter which controls the
difference between the characteristic function $\theta$ and 
the quasi-characteristic function $\xi$
in the phase field model \cite{BKZ,LZZ}.

Let us consider its extremal point following the 
variational principle in a purely geometrical sense.

\begin{proposition} \label{prop:4-3}
For sufficiently small $\epsX$, we have
\begin{equation*}
\begin{split}
	\frac{\delta}{\delta \xi(x)} \cA_\xi 
         &=-\partial_i\frac{\partial_i \xi }{|\nabla \xi| }(x)
                        \\
       &=\kappa(x),
\end{split}
\end{equation*}
where $x \in S$ or $q=0$. 
\end{proposition}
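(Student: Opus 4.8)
The plan is to read off the first equality directly from the Euler--Lagrange machinery of Section~\ref{sec:two-two} and then convert the resulting divergence into mean curvature via Proposition~\ref{prop:2-1}. Concretely, I would apply Proposition~\ref{prop:A-1} with the single field $\phi_1 = \xi$ and the local functional
\[
\cF(\xi,\partial_i\xi) = |\nabla\xi| = \Bigl(\sum_{j}(\partial_j\xi)^2\Bigr)^{1/2}.
\]
The crucial structural observation is that $\cF$ depends on $\xi$ only through its first derivatives, so $\delta\cF/\delta\xi = 0$ and the first term of (\ref{eq:AppeA1}) drops out. A routine differentiation gives
\[
\frac{\delta\cF}{\delta\,\partial_i\xi} = \frac{\partial_i\xi}{|\nabla\xi|},
\]
whence Proposition~\ref{prop:A-1} yields $\dfrac{\delta\cA_\xi}{\delta\xi(x)} = -\,\partial_i\dfrac{\partial_i\xi}{|\nabla\xi|}(x)$, which is exactly the first asserted identity. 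This step is valid wherever $|\nabla\xi|\neq 0$, so I would note at the outset that for sufficiently small $\epsX$ the interpolation region lies inside $T_S$ and $\nabla\xi$ is nonvanishing there (being monotone along $dq$), so the expression is well defined.

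For the second equality I would use the construction of $\xi$ in (\ref{eq:epsX}). Within $T_S$ the color function $\xi$ varies only along the normal direction $dq$ (it is constant on each level surface of $q$ and monotone across them), so $\nabla\xi$ is everywhere parallel to $\nabla q$, giving
\[
\frac{\nabla\xi}{|\nabla\xi|} = e_q
\]
on $S$ after fixing the orientation, where $e_q = \nabla q/|\nabla q|$ is the unit normal of Section~\ref{sec:two-one}. Then $-\,\partial_i\dfrac{\partial_i\xi}{|\nabla\xi|} = -\,\mathrm{div}(e_q)$ is the divergence of the unit normal field, and Proposition~\ref{prop:2-1} identifies $\mathrm{div}(e_q)$ with the twice-mean-curvature $\kappa$ on $S$. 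The sign is reconciled by the orientation convention: since $\xi$ increases toward the interior $B$ while the signed-distance $q$ increases toward the exterior, $\nabla\xi$ and $\nabla q$ point in opposite senses, and this flip converts $-\,\mathrm{div}(\nabla\xi/|\nabla\xi|)$ into $+\,\mathrm{div}(\nabla q/|\nabla q|) = \kappa$.

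I expect the main obstacle to be not the computation but the careful justification of the restriction to $S$ (the clause ``$x\in S$ or $q=0$''). The quantity $-\,\mathrm{div}(\nabla\xi/|\nabla\xi|)$ is defined throughout the tubular neighborhood, but there it computes the mean curvature of the \emph{parallel} level surface of $q$ through $x$, which coincides with $\kappa(S)$ only on the level $q=0$; the mean curvature of nearby parallel surfaces differs by curvature-dependent corrections. I would therefore emphasize that $e_q$ is the lift to $T_S$ of the genuine unit normal of $S$, that Proposition~\ref{prop:2-1} is an identity \emph{at points of} $S$, and that only there does the identification with $\kappa$ hold. A secondary technical point to address is that $\xi$ is only Lipschitz (through the signed distance) rather than smooth, so I would invoke the tubular-neighborhood smoothness of $q$ guaranteed by $|q|<\epsT$ in Section~\ref{sec:two-one} to make the derivatives above classical on the interpolation layer.
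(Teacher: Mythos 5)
Your proposal is correct and takes essentially the same route as the paper's own proof: the first equality comes from Proposition~\ref{prop:A-1} applied to $\cF=|\nabla\xi|$ (whose dependence is only on $\partial_i\xi$), and the second from Proposition~\ref{prop:2-1} combined with the orientation fact $\partial\xi/\partial q<0$ at $q=0$, which is exactly the sign flip you describe. Your additional remarks on restricting to $S$ and on smoothness of $q$ inside the tubular neighborhood merely make explicit what the paper's terse proof leaves implicit.
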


\begin{proof}
Noting the facts that $\partial \xi/\partial q <0$ at $q=0$ and
\begin{equation*}
	|\nabla \xi| = \sqrt{\nabla \xi\cdot \nabla \xi},
\end{equation*}
Proposition \ref{prop:A-1} and
the equality in Proposition \ref{prop:2-1} show the relation.
\qed
\end{proof}

In the vicinity of $S$, $q$ in Section \ref{sec:two-one}
could
be identified with the level-set function and 
the authors in References \cite{ZCMO,ZMOW} also used this relation.
Since all of geometrical quantities on $S$ are
lifted to $T_S$ as the inverse image of $\pi$,
the relation in Proposition \ref{prop:4-3} is also
defined over $(\supp(|\nabla \xi|))^\circ \subset  T_S$
and we redefine the $\kappa$ by the relation from here.

\bigskip
\subsection{Statics}
Let us consider physical problems as we finish the geometrical
setting.
Before we consider dynamics of the phase field flow,
we consider a statical surface problem.
Let $\sigma$ be the surface tension coefficient between two fluids
corresponding to $\xi$ and $\xi^c$.
Now let us call $\xi$ and $\xi^c$ {\lq\lq}color functions{\rq\rq} or 
{\lq\lq}phase fields{\rq\rq}. 
More precisely, we say that a color function with individual
physical parameters is a phase field.
The surface energy $\cE:=\sigma \cA$ is, then, approximately given by
\begin{equation}
	\cE_\two := \sigma \cA_\xi = \sigma\int_{\Omega} |\nabla \xi| d^3 x.
\end{equation}
As a statical mechanical problem, we consider
the variational method of this system following Section \ref{sec:two-two}.

Since a statical surface phenomenon is caused by the difference
of the pressure of each material, we now consider a free energy functional
\cite{MM},
\begin{equation}
	\cF_\two :=\int_{\Omega} 
     \left(\sigma|\nabla \xi| - (p_1 \xi + p_2 \xi^c)\right) d^3 x,
  \label{eq:ELs0}
\end{equation}
where $p_a$ ($a = 1, 2$) is the proper pressure of each material.

\begin{proposition}\label{prop:4-4}
The variational problem with respect to $\xi$, $\delta \cF_\two/\delta \xi =0$,
reproduces the Laplace equation {\rm{\cite[Chap.7]{LL}}},
\begin{equation}
  (p_1 - p_2)  - \sigma \kappa(x) = 0, \quad x\in 
        (\supp(|\nabla \xi|))^\circ.
  \label{eq:ELs}
\end{equation}
\end{proposition}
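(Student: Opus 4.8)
The plan is to treat $\cF_\two$ as an instance of the functional $I$ of Proposition~\ref{prop:A-1} with a single field ($\ell=1$, $n=3$, $\phi_1=\xi$) and simply read off the Euler--Lagrange equation~(\ref{eq:AppeA1}). First I would eliminate $\xi^c$ from the integrand using the partition-of-unity relation $\xi^c = 1-\xi$, so that the Lagrangian density becomes a genuine function of $\xi$ and $\nabla\xi$ alone,
\begin{equation*}
\cF = \sigma|\nabla\xi| - (p_1-p_2)\xi - p_2,
\end{equation*}
treating $p_1,p_2$ as prescribed ($\xi$-independent) pressures, so that the constant term $-p_2$ contributes nothing to the variation.

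Next I would compute the two contributions to (\ref{eq:AppeA1}). The undifferentiated part is immediate, $\delta\cF/\delta\xi = -(p_1-p_2)$, since the only dependence on $\xi$ outside the gradient is the linear term. For the derivative part, differentiating $\sigma|\nabla\xi| = \sigma\sqrt{\sum_j(\partial_j\xi)^2}$ gives $\delta\cF/\delta\partial_i\xi = \sigma\,\partial_i\xi/|\nabla\xi|$, which is well defined precisely on $(\supp(|\nabla\xi|))^\circ$, where $|\nabla\xi|\neq0$ and the square root is smooth; this is exactly why the statement restricts $x$ to that open set, and it is the only analytic subtlety of the argument. Substituting into (\ref{eq:AppeA1}) then yields
\begin{equation*}
-(p_1-p_2) - \sigma\sum_i\partial_i\frac{\partial_i\xi}{|\nabla\xi|} = 0.
\end{equation*}

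Finally I would invoke Proposition~\ref{prop:4-3} directly, equivalently Proposition~\ref{prop:2-1} together with the sign observation $\partial\xi/\partial q<0$ that identifies $\nabla\xi/|\nabla\xi| = -\nabla q/|\nabla q|$, to replace $\sum_i\partial_i\bigl(\partial_i\xi/|\nabla\xi|\bigr) = \mathrm{div}\left(\nabla\xi/|\nabla\xi|\right) = -\kappa$. The displayed equation then collapses to $-(p_1-p_2)+\sigma\kappa=0$, i.e. $(p_1-p_2)-\sigma\kappa(x)=0$ on $(\supp(|\nabla\xi|))^\circ$, which is precisely (\ref{eq:ELs}). I expect the main point to watch is the bookkeeping of the sign inherited from the monotonicity of $\xi$ in $q$; once Proposition~\ref{prop:4-3} is granted, the remainder is a one-line application of the Euler--Lagrange machinery already established in Section~\ref{sec:two-two}.
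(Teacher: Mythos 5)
Your proposal is correct and follows essentially the same route as the paper: the paper's proof is the one-line remark that Proposition~\ref{prop:A-1} plus direct computation gives the relation, and your write-up is exactly that computation — eliminating $\xi^c$ by the partition of unity, reading off the Euler--Lagrange equation, and identifying the divergence term with $\kappa$ via Proposition~\ref{prop:4-3} (equivalently Proposition~\ref{prop:2-1} with the sign $\partial\xi/\partial q<0$). Your sign bookkeeping and the restriction to $(\supp(|\nabla\xi|))^\circ$ both match the paper's conventions.
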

\begin{proof}
As in Proposition \ref{prop:A-1}, direct computations give the relation.
\qed
\end{proof}

This proposition implies that the functional $\cF_\two$ is natural. The
solutions of (\ref{eq:ELs}) are given by the constant mean curvature
surfaces studied in References \cite{FW,GMO,T}. 

Furthermore we also have another static equation,
whose relation to the Laplace equation (\ref{eq:ELs})
is written in Remark 
\ref{rmk:4-2}.
\begin{proposition}\label{prop:4-5}
For every point $x \in \Omega$,
the variation principle,
$\delta \cF_\two/\delta x^i=0$, gives
\begin{equation}
  \sigma\left(  \sum_j \partial_i\frac{\partial_j \xi 
              \partial_j \xi }{|\nabla \xi| }
     -  \sum_j\partial_j\frac{\partial_j \xi \partial_i \xi }{|\nabla \xi| }
 \right)
  - (p_1 - p_2) \partial_i \xi=0,
\label{eq:Amini0}
\end{equation}
or
\begin{equation}
\partial_j \tau_{ij}(x) - (p_1 - p_2) \partial_i \xi(x)=0,
\label{eq:Amini}
\end{equation}
where
$$
\tau(x) := \sigma\left(I - \frac{\nabla \xi}{|\nabla \xi|}
\otimes
\frac{\nabla \xi}{|\nabla \xi|}\right)
|\nabla \xi|(x).
$$
\end{proposition}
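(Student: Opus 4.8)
The plan is to read the variation $\delta \cF_\two/\delta x^i = 0$ as the spatial-translation variation handled by Noether's theorem, so that Proposition \ref{prop:A-2} applies verbatim with the single field $\phi_1 = \xi$ (hence $\ell = 1$) and the Lagrangian density
$$
\cF = \sigma |\nabla \xi| - (p_1 \xi + p_2 \xi^c).
$$
Since $\xi^c = 1 - \xi$, I would first rewrite $\cF = \sigma|\nabla \xi| - (p_1 - p_2)\xi - p_2$; the additive constant $p_2$ is killed by every derivative occurring in (\ref{eq:AppeA2}) and so drops out. Thus the whole statement is a direct substitution into the Noether identity, and the content of the proof is purely a chain-rule computation.

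Next I would assemble the two ingredients of the Noether current. Writing $|\nabla \xi| = \sqrt{\sum_k \partial_k \xi \, \partial_k \xi}$, the momentum-type term is $\frac{\delta \cF}{\delta \partial_j \xi} = \sigma \frac{\partial_j \xi}{|\nabla \xi|}$, so that
$$
\sum_j \partial_j \Bigl[\tfrac{\delta \cF}{\delta \partial_j \xi}\, \partial_i \xi\Bigr]
= \sigma \sum_j \partial_j \frac{\partial_j \xi \, \partial_i \xi}{|\nabla \xi|}.
$$
For the remaining piece $\partial_i[\cF]$ the only observation needed is the tautology $\sum_j \partial_j \xi\, \partial_j \xi / |\nabla \xi| = |\nabla \xi|$, which lets me record $\partial_i |\nabla \xi| = \sum_j \partial_i \frac{\partial_j \xi\, \partial_j \xi}{|\nabla \xi|}$, whence $\partial_i[\cF] = \sigma \sum_j \partial_i \frac{\partial_j \xi\, \partial_j \xi}{|\nabla \xi|} - (p_1 - p_2)\partial_i \xi$. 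Inserting both into $\frac{\delta \cF_\two}{\delta x^i} = \sum_j \partial_j[\cdots] - \partial_i[\cF] = 0$ and multiplying through by $-1$ produces exactly (\ref{eq:Amini0}).

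Finally, to reach the stress-tensor form (\ref{eq:Amini}) I would simply expand the divergence of $\tau_{ij} = \sigma\bigl(\delta_{ij}|\nabla \xi| - \partial_i \xi\, \partial_j \xi/|\nabla \xi|\bigr)$. The $\delta_{ij}$ part gives $\sigma \partial_i |\nabla \xi| = \sigma \sum_j \partial_i \frac{\partial_j \xi\, \partial_j \xi}{|\nabla \xi|}$ again by the same tautology, while the second part gives $-\sigma \sum_j \partial_j \frac{\partial_i \xi\, \partial_j \xi}{|\nabla \xi|}$; together $\partial_j \tau_{ij}$ coincides with the bracketed combination in (\ref{eq:Amini0}), so the two displayed equations are equivalent.

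There is no genuine analytic obstacle here; the entire argument collapses to the chain rule together with $|\nabla \xi|^2/|\nabla \xi| = |\nabla \xi|$. The only points requiring care are bookkeeping: keeping the overall sign of the Noether relation (\ref{eq:AppeA2}) straight when transposing it to (\ref{eq:Amini0}), and recognizing the difference of the two divergence-type terms as the honest divergence $\partial_j \tau_{ij}$ of the projection tensor $\sigma\bigl(I - \tfrac{\nabla\xi}{|\nabla\xi|}\otimes\tfrac{\nabla\xi}{|\nabla\xi|}\bigr)|\nabla\xi|$. A secondary subtlety I would flag is that $|\nabla \xi|$ must not vanish for these manipulations, so the identities hold pointwise on $(\supp(|\nabla \xi|))^\circ$, exactly the domain used in the neighbouring propositions.
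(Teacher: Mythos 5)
Your core computation is correct and is essentially the paper's own route: the paper also proves this proposition by applying Proposition \ref{prop:A-2} to $\cF_\two$, with momentum term $\frac{\delta \cF}{\delta \partial_j \xi} = \sigma\,\partial_j \xi/|\nabla\xi|$, and the passage between (\ref{eq:Amini0}) and (\ref{eq:Amini}) is exactly the rewriting of the two divergence-type terms as $\partial_j \tau_{ij}$ via the tautology $\sum_j \partial_j\xi\,\partial_j\xi/|\nabla\xi| = |\nabla\xi|$. Your sign bookkeeping is right. One stylistic point you leave implicit but the paper states outright: Proposition \ref{prop:A-2} is an on-shell identity --- its derivation consumes the Euler--Lagrange equation (\ref{eq:AppeA1}) --- so quoting it ``verbatim'' presupposes that $\xi$ satisfies the Laplace equation (\ref{eq:ELs}); the paper says explicitly that (\ref{eq:ELs}) is being used as the Euler--Lagrange equation in this application.

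The one genuine gap is in your closing paragraph, where you retreat to $(\supp(|\nabla\xi|))^\circ$. The proposition claims the equations for \emph{every} $x \in \Omega$, and this is not a throwaway phrase: Remark \ref{rmk:4-0} (and later Remark \ref{rmk:5-2}) single out the global validity of (\ref{eq:Amini0}) and (\ref{eq:Amini}) as precisely their advantage over the Laplace equation (\ref{eq:ELs}), which is what makes the stress-tensor form of the surface tension usable over all of $\Omega$ in the dynamical equations. The paper closes this gap with two observations you omit: first, for $x \notin (\supp(|\nabla\xi|))^\circ$ the Euler--Lagrange equation degenerates to the trivial relation $0=0$, so Proposition \ref{prop:A-2} still applies there; second, because $|\partial_i \xi| \le |\nabla\xi|$, every ratio occurring in (\ref{eq:Amini0}) and in $\tau$ is dominated by $|\nabla\xi|$, hence is well-defined (and vanishes) at points where the denominator vanishes, so both equations hold --- trivially --- off the support as well. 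Adding these two remarks upgrades your argument from the interior of the support to all of $\Omega$, which is what the statement actually asserts.
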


\begin{proof}
We are, now, concerned with  the variation 
$x \to x + \delta x$ for every point $x \in \Omega$.
We apply Proposition \ref{prop:A-2} to this case, {\it{i.e.}},
\begin{equation*}
\begin{split}
\frac{\delta \cF_\two}{\delta x^i}
&= -\sigma \left[\partial_i |\nabla \xi|- \partial_j\left(
         \partial_i \xi(x) \cdot\frac{\delta}{\delta\partial_j \xi(x)} 
           |\nabla \xi|  \right) \right](x)
 + (p_1 - p_2) \partial_i \xi(x),
\end{split}
\end{equation*}
by using (\ref{eq:ELs}) as its Euler-Lagrange equation
(\ref{eq:AppeA1}). Further for $x\not\in (\supp(|\nabla \xi|))^\circ$,
its Euler-Lagrange equation (\ref{eq:AppeA1}) gives a trivial
relation, {\it{i.e.}}, {\lq\lq}$0=0${\rq\rq}.
Then we have (\ref{eq:Amini}).
\qed
\end{proof}

\begin{myremark}\label{rmk:4-0}
{\rm{
It is worthwhile noting that (\ref{eq:Amini0}) and (\ref{eq:Amini}) 
are defined over $\Omega$
rather than $(\supp(|\nabla \xi|))^\circ$ because due to the relation,
$$
  | \partial_i \xi | \le |\nabla \xi|,
$$
even at the point at which denominators in the first term in (\ref{eq:Amini0}) 
vanish,
the first term is well-defined and vanishes.

Hence (\ref{eq:Amini0}) and (\ref{eq:Amini}) 
could be regarded as an extension of the defined region
of (\ref{eq:ELs}) to $\Omega$ and thus  
 (\ref{eq:Amini0}) and (\ref{eq:Amini}) 
have the advantage over (\ref{eq:ELs}).
The extension makes the handling of the surface tension much easier.
}}
\end{myremark}

\begin{myremark}\label{rmk:4-1}
{\rm{
In the statical mechanics, there appears a force $\partial_i \tau_{ij}$,
which agrees with one in  
(33) and (34) in Reference \cite{LZZ}
and (2.11) in Reference \cite{Jac}. 
We should note that in Reference \cite{Jac},
it was also stated that 
 this term is derived from the momentum conservation 
however there was not its derivation in detail. 
The derivation of the above $\tau$ needs the 
Euler-Lagrange equation (\ref{eq:AppeA1}), which corresponds to
 the Laplace equation (\ref{eq:ELs}) in this case,
when we apply Proposition \ref{prop:A-2} to
this system, though these objects did not appear in Reference \cite{Jac}. 
}}
\end{myremark}

\begin{myremark}\label{rmk:4-2}
{\rm{
In this remark, we comment on the identity between 
  (\ref{eq:ELs}) and (\ref{eq:Amini}).
Comparing these, we have the identity,
$$
	\partial_i \tau_{ij} = \sigma \partial_j \xi \cdot \kappa,
$$
which is, of course, obtained from the primitive computations.
It implies that (\ref{eq:Amini}) can be derived from the 
Laplace equation (\ref{eq:ELs}) with this relation. 
However it is worthwhile noting that both come from the 
variational principle in this article.
 In fact, when we handle multiple junctions,
we need a generalization of the Laplace equations over there 
like (\ref{eq:ELM3}), which is not easily obtained by taking the primitive
approach. 
Further the derivations from the variational principle
show their geometrical meaning in the sense of References \cite{Ar,AM,BGG}.
}}
\end{myremark}
\bigskip
\subsection{Dynamics}

Now we investigate the dynamics of the two-phase field.
There are two different liquids which are expressed by
phase fields $\xi$ and $\xi^c$ respectively.
We assume that they obey the incompressible fluid
dynamics. 
As in the previous section,
we consider the action of the volume-preserving diffeomorphism  
group $\SDiff(\Omega)$
on the color functions $\xi$ and $\xi^c$.
We extend the domain of $\xi$ and $\xi^c$  to $\Omega \times T$
and they are smooth sections of $\Coor(\Omega\times T)$.
For the given $t$, we will regard
$\xi$ and $\xi^c$ as functions of $\gamma^i$ in the previous section,
{\it{i.e.},} $\xi=\xi(\gamma(x,t))$.
For example, the density of the fluid is expressed by the relation,
 $$
\rho = \rho_1 \xi^c + \rho_2 \xi
$$ 
for constant proper densities $\rho_1$ and $\rho_2$ of the individual
liquids.
The density $\rho$, now, differs from a constant function
over $\Omega\times T$ in general.

We consider the action integral $\cS_{\two}$ 
including the surface energy,
\begin{equation}
	\cS_{\two}[\gamma]
     = \int_T d t\int_{\Omega}\left( \frac{1}{2}\rho |u|^2 -
               \sigma|\nabla \xi| + (p_1\xi + p_2 \xi^c) \right) d^3 x.
\label{eq:AI2d}
\end{equation}
The ratio between $\rho$ and $\sigma$
determines the ratio between the contributions
of the kinematic part and the potential (or surface energy) part
in the dynamics of the fluid.
Since the integrand in 
(\ref{eq:AI2d}) contains no $\partial \xi/\partial t$ term,
we obtain the same terms in the variational calculations
from the second and the third term in (\ref{eq:AI2d})
as those in (\ref{eq:ELs}) and (\ref{eq:Amini}) in 
 the static case even if we regard $n$ as $4$
and $x^4$ as $t$ in Section \ref{sec:two-two}.
By applying Proposition \ref{prop:A-1} to this system, we have
the following proposition as the Euler-Lagrange equation for $\xi$.

\begin{lemma}
The function derivative of $\cS_{\two}$ with respect to $\xi$ gives
\begin{equation}
	\frac{1}{2}(\rho_1 - \rho_2) |u(x,t)|^2
       + (p_1 - p_2) - \sigma \kappa (x,t)= 0, 
   \quad x\in 
        (\supp(|\nabla \xi|))^\circ,
\label{eq:EL2d}
\end{equation}
up to the volume preserving condition.
\end{lemma}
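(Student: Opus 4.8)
The plan is to treat $\cS_\two[\gamma]$ in (\ref{eq:AI2d}) as a functional of the single scalar field $\xi$, holding the velocity field $u$ (equivalently $\gamma$) fixed, and to apply the Euler--Lagrange machinery of Proposition \ref{prop:A-1} with $\ell=1$, $\phi_1=\xi$, and $n=4$, $x^4=t$, exactly as suggested in the paragraph preceding the statement. Since the integrand of (\ref{eq:AI2d}) contains no $\partial\xi/\partial t$ (nor any other time derivative of $\xi$), the $t$-slot in (\ref{eq:AppeA1}) drops out and the computation collapses onto the purely spatial variation already carried out in the static case, so the Euler--Lagrange expression is identical in form to the one that produced (\ref{eq:ELs}).

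Next I would compute the three contributions to $\frac{\delta\cF}{\delta\xi}-\sum_i\partial_i\frac{\delta\cF}{\delta\partial_i\xi}$ term by term. The surface term $-\sigma|\nabla\xi|$ enters only through the $\partial_i\xi$ derivatives, and by the identity $-\partial_i(\partial_i\xi/|\nabla\xi|)=\kappa$ of Proposition \ref{prop:4-3} (itself a consequence of Proposition \ref{prop:2-1}) it yields precisely the term $-\sigma\kappa$. The pressure term is handled using the partition-of-unity relation $\xi^c=1-\xi$, so that $\delta\xi^c=-\delta\xi$ and the contribution is $p_1-p_2$. Finally the kinetic term $\tfrac12\rho|u|^2$ is differentiated through the explicit dependence $\rho=\rho_1\xi^c+\rho_2\xi$, giving $\tfrac12(\partial\rho/\partial\xi)|u|^2=\tfrac12(\rho_2-\rho_1)|u|^2$, which reproduces the first term of (\ref{eq:EL2d}) up to the labelling of $\rho_1,\rho_2$. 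Summing the three pieces and equating the Euler--Lagrange expression to zero gives (\ref{eq:EL2d}).

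The domain restriction to $(\supp(|\nabla\xi|))^\circ$ is then justified exactly as in the static analysis: where $|\nabla\xi|=0$ the curvature term is undefined and the Euler--Lagrange equation degenerates to the trivial identity $0=0$, so the nontrivial relation carries content only on the interior of the support of $|\nabla\xi|$ (cf.\ Remark \ref{rmk:4-0} and Proposition \ref{prop:4-5}).

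The step I expect to be the genuine obstacle --- and the reason for the qualifier ``up to the volume preserving condition'' --- is the legitimacy of differentiating $\tfrac12\rho|u|^2$ with respect to $\xi$ while treating $u$ as fixed. In the true dynamics $\xi$ is advected by the flow $\gamma$, so $\xi$ and $u$ are not independent, and a fully honest variation would entangle this term with the incompressibility constraint and the projection onto $\sdiff(\Omega)$ of Section \ref{sec:three}. The clean resolution is to regard $\rho$ as an explicit algebraic function of the field value $\xi$ and to carry out the $\xi$-variation as in the static Proposition \ref{prop:A-1}, deferring the coupling through $\mathrm{div}(u)=0$ to the separate projection/Lagrange-multiplier argument; this deferral is exactly what the phrase ``up to the volume preserving condition'' records.
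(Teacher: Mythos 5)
Your proposal is correct and takes essentially the same route as the paper: the paper likewise treats $t$ as a fourth coordinate, observes that no $\partial\xi/\partial t$ appears in the integrand of (\ref{eq:AI2d}), and applies Proposition \ref{prop:A-1} so that the surface and pressure terms reproduce the static computation of (\ref{eq:ELs}) while the kinetic term contributes $\tfrac12(\partial\rho/\partial\xi)|u|^2$, with the incompressibility coupling deferred to the projection argument (the ``up to the volume preserving condition'' qualifier). The sign discrepancy you flag in the density term is indeed only a labelling artifact --- the paper itself pairs $\rho_1$ with $\xi^c$ but $p_1$ with $\xi$ --- so your treatment of it as a convention issue is exactly right.
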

This could be interpreted as a generalization of the 
Laplace equation (\ref{eq:ELs}) as in the following remark.

\begin{myremark}{\rm{
Here we give some comments on the
generalized Laplace equation (\ref{eq:EL2d})
up to the volume preserving condition.
This relation (\ref{eq:EL2d})
does not look invariant for 
Galileo's transformation, $u \to u + u_0$ for a constant velocity
$u_0$.
However for the simplest problem of  Galileo's boost,
{\it{i.e.}}, static state on a system with a constant velocity $u_0$,
the equation (\ref{eq:EL2d}) gives
\begin{equation}
	\frac{1}{2}(\rho_1 - \rho_2) |u_0|^2
       + (p_1 - p_2) - \sigma \kappa (x,t)= 0, 
   \quad x\in 
        (\supp(|\nabla \xi|))^\circ,
\end{equation}
which might differ from the  Laplace equation (\ref{eq:ELs}).
However for the boost, we should transform $p_a$ into
\begin{equation}
	\tilde p_a := p_a + \frac{1}{2} \rho_a |u_0|^2.
\label{eq:GalileoB}
\end{equation}
Then the above equation of $\tilde p_a$ agrees with the static one
 (\ref{eq:ELs}). In other words (\ref{eq:GalileoB}) makes
 our theory invariant for the Gaililio's transformation.

For a more general case, we should regard $p_a$ as a function over
$\Omega \times T$ rather than a constant number due to
the volume preserving condition.
These values are contained in the pressure as mentioned in 
(\ref{eq:p_PLTwo}). 
The statement {\lq\lq}up to the volume preserving condition{\rq\rq}
 has the meaning in this sense.
In fact, in the numerical computation,
these individual pressures $p_a$'s are  not so important
as we see in Remark \ref{rmk:4-11}.
Due to the constraint of the incompressible
(volume-preserving) condition, the pressure $p$ is determined
as mentioned in Remark \ref{rmk:3-2}.
There are no contradictions with the 
Galileo's transformation and $\SDiff(\Omega)$-action.
}}
\end{myremark}

We consider the infinitesimal action of
$\SDiff(\Omega)$ around its identity.
As did in Section \ref{sec:three},
we apply the variational method to this
system in order to obtain the Euler equation with the surface tension.

\begin{proposition}\label{prop:4-6}
For every $(x,t) \in \Omega \times T$,
the variational principle,
$\delta \cS_\two/\delta \gamma^i(x,t) = 0$, gives the equation of motion,
or the Euler equation with the surface tension,
\begin{equation}
\begin{split}
        \frac{D \rho u^i}{D t} +
            \sigma \left( \sum_j \partial_i\frac{\partial_j \xi 
              \partial_j \xi }{|\nabla \xi| }
     -  \sum_j  \partial_j\frac{\partial_j \xi \partial_i \xi }{|\nabla \xi| }
     \right)
          + \partial_i p = 0.
\label{eq:Eulxi}
\end{split}
\end{equation}
Here $p$ is also the pressure coming from the effect of the 
volume-preserving.
\end{proposition}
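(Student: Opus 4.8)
The plan is to split the action (\ref{eq:AI2d}) as $\cS_\two[\gamma]=\cS_\free[\gamma]+\int_T dt\int_\Omega\left(-\sigma|\nabla\xi|+p_1\xi+p_2\xi^c\right)d^3x$ and to vary each piece by a device already available. Crucially, the integrand of the second piece contains no $\partial\xi/\partial t$, and at each fixed $t$ it is the negative of the integrand of $\cF_\two$ in (\ref{eq:ELs0}). Because $\xi=\xi(\gamma(x,t))$, the variation $\gamma^i\to\gamma^i+\delta\gamma^i$ gives $\xi(\gamma+\delta\gamma)=\xi(\gamma)+\partial_k\xi\,\delta\gamma^k+O(\delta\gamma^2)$, which is exactly the replacement underlying Proposition \ref{prop:A-2}. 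Hence a fibre-wise application of Noether's theorem reproduces, up to an overall sign, the static computation of Proposition \ref{prop:4-5}: the second piece contributes $-\sigma\left(\sum_j\partial_i\frac{\partial_j\xi\,\partial_j\xi}{|\nabla\xi|}-\sum_j\partial_j\frac{\partial_j\xi\,\partial_i\xi}{|\nabla\xi|}\right)+(p_1-p_2)\partial_i\xi$ to $\delta\cS_\two/\delta\gamma^i$.

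For the kinematic piece I would repeat the variation leading to Proposition \ref{prop:3-1} essentially verbatim; the only new feature is that $\rho=\rho_1\xi^c+\rho_2\xi$ is no longer constant. Since $\xi,\xi^c$ are advected by the $\SDiff(\Omega)$-flow, the mass element $\rho\,d^3x$ is carried along with the fluid, so $\rho$ enters the material-derivative identity of that proof only as a passive weight and one obtains $\frac{d}{dt}\left(\rho\,\ee^{-\tgamma^j\partial_j}\frac{d}{dt}\ee^{\tgamma^n\partial_n}\right)=\left(\frac{D\rho u^i}{Dt}\right)\partial_i$ together with the gradient term $\partial_i\tfrac12\rho|u|^2$. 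This supplies the first term of (\ref{eq:Eulxi}).

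Combining the two contributions, the variation carried out first in $\diff(\Omega)$ and then set to zero (multiplying through by $-1$) gives $\frac{D\rho u^i}{Dt}+\sigma\left(\sum_j\partial_i\frac{\partial_j\xi\,\partial_j\xi}{|\nabla\xi|}-\sum_j\partial_j\frac{\partial_j\xi\,\partial_i\xi}{|\nabla\xi|}\right)+\partial_i\tfrac12\rho|u|^2-(p_1-p_2)\partial_i\xi=0$. Because $p_1,p_2$ are constants one has $(p_1-p_2)\partial_i\xi=\partial_i(p_1\xi+p_2\xi^c)$, so this term and $\partial_i\tfrac12\rho|u|^2$ are pure gradients. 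Projecting from $T\Diff(\Omega)|_{\SDiff(\Omega)}$ onto $T\SDiff(\Omega)$, exactly as in Proposition \ref{prop:3-1} and Remark \ref{rmk:3-2}, introduces a further gradient $\partial_i p_L$ enforcing $\mathrm{div}(u)=0$; the renormalization $p:=p_L+\tfrac12\rho|u|^2-(p_1\xi+p_2\xi^c)$ then collapses all three gradient terms into a single $\partial_i p$ and yields exactly (\ref{eq:Eulxi}).

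The step I expect to be the main obstacle is the bookkeeping of the non-constant advected density $\rho(\xi(\gamma))$ in the kinematic variation. Since $\rho$ itself depends on $\gamma$, one must verify that its variation contributes nothing beyond the momentum material derivative $\frac{D\rho u^i}{Dt}$; this hinges on the advection identity $D\rho/Dt=0$ (mass conservation for the phase field) and on the volume-preserving constraint keeping $\partial\gamma/\partial x\equiv1$ throughout, precisely the constraint built into (\ref{eq:etgamma}). A second, more bookkeeping-level point is to confirm that the same variational calculation that produced (\ref{eq:ELs}) and (\ref{eq:Amini}) in the static case carries over unchanged when $t$ is appended as a fourth coordinate, so that no new $\partial\xi/\partial t$ contribution is generated by the surface-energy term.
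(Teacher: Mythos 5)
Your proposal is correct and follows essentially the same route as the paper's own (much terser) proof: the paper likewise obtains the kinematic part together with the pressure from Proposition \ref{prop:3-1} and Remark \ref{rmk:3-2}, gets the surface-tension and proper-pressure terms from the static Noether computation of Proposition \ref{prop:4-5} applied time-slice by time-slice (the integrand having no $\partial\xi/\partial t$), and absorbs all gradient terms into the renormalized pressure of Remark \ref{rmk:4-11}. The only deviation is the sign of $p_1\xi+p_2\xi^c$ inside your renormalized pressure (the paper's Remark \ref{rmk:4-11} writes $p=p_L+\tfrac12\rho|u|^2+p_1\xi+p_2\xi^c$); your minus sign is actually the one consistent with the paper's sign conventions for the action, and in any case the discrepancy is immaterial because $p_L$ is an undetermined multiplier and $p$ is fixed a posteriori by $\mathrm{div}(u)=0$.
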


\begin{proof}
The measure $d^3 x$ is regarded as 
$\displaystyle{\frac{\partial \gamma}{\partial x} d^3 x}$ with
$\displaystyle{\frac{\partial \gamma}{\partial x} = 1}$.
Noting $\displaystyle{\frac{d}{dt}\frac{\partial \gamma}{\partial x} = 0}$,
the proof in Proposition \ref{prop:3-1} and Remark \ref{rmk:3-2}
provide  the kinematic part with pressure term and 
Proposition \ref{prop:4-5} gives the remainder.
In this proof, the total pressure $p$ is defined in Remark \ref{rmk:4-11}.
\qed
\end{proof}

\begin{myremark} \label{rmk:4-11}
{\rm{
More rigorous speaking, as we did in Remark \ref{rmk:3-2},
we also renormalize the pressure,
\begin{equation}
\begin{split}
	p &= p_L + \frac{1}{2}\rho |u|^2 + p_1 \xi + p_2 \xi^c \\
	&= p_L + \frac{1}{2}(\rho_1 - \rho_2) \xi |u|^2 
+ (p_1 - p_2) \xi + \frac{1}{2}\rho_2 |u|^2 + p_2. 
\end{split}
\label{eq:p_PLTwo}
\end{equation}
As in Section \ref{sec:two-two}, the third term  in 
(\ref{eq:Eulxi}) includes the effects from $p_a$'s via the
generalized Laplace equation (\ref{eq:EL2d}) as the Euler-Lagrange
equation  (\ref{eq:AppeA1}).
}}
\end{myremark}

\begin{myremark} \label{rmk:7} {\rm{
\begin{enumerate}
\item
The equation of motion (\ref{eq:Eulxi}) is 
the same as (24) in Reference \cite{LZZ} basically.
We emphasize that  it is 
reproduced by   the variational principle.
\item
As in Reference \cite{LZZ}, in our framework, we can deal with the 
topology
changes and the singularities which are controlled by the parameter
$\epsX$. The above dynamics is well-defined as a field
equation provided that $\epsX$ is finite. If needs,
one can evaluate
its extrapolation for vanishing of $\epsX$.

\item
In general, $\epsX$ is not constant for the time
development. Due to the equation of motion, it changes.
At least, in numerical computation, the numerical
diffusion makes the intermediate region wider in general. 
However even when the time passes 
but we regard it as a small parameter,
the approximation is justified.

\item
Since from Remark \ref{rmk:4-0}, the surface tension is defined over 
$\Omega$, the Euler equation is defined over $\Omega$ without any 
assumptions.

\item
It should be noted that the surface force is
not difficult to be computed as in Reference \cite{LZZ} but
there sometimes appear so-called parasite current problems in the 
computations even though we will not touch the problem 
in this article.

\end{enumerate}
}}
\end{myremark}

\section{Multi-phase flow with multiple junctions}
\label{sec:five}

In this section,
we extend the SURFER scheme \cite{LZZ}
of two-phase flow  to multi-phase ($N$-phase, $N\ge2$) flow.

\subsection{Geometry of color functions}

In order to extend the geometry of the color functions in the previous
section, we introduce several geometrical tools.
First let us define a geometrical object similar to smooth $d$-manifold 
with boundary.
Here we note that $d$-manifold means  $d$-dimensional manifold,
and $d$-manifold with boundary means that 
its interior is a $d$-manifold and its boundary is
a $(d-1)$-dimensional manifold. We distinguish
a smooth (differential) manifold from a topological manifold here.

When we consider multi-junctions in $\EE^3$,
we encounter
a geometrical object with smooth {\lq\lq}boundaries{\rq\rq} whose dimensions
are two, one and zero 
even though it is regarded as a topological $3$-manifold with boundary.

\begin{definition} \label{def:5-1}
We say that a path-connected topological $d$-manifold with boundary $V$ is 
{\it{a path-connected interior smooth $d$-manifold}}
if $V$ satisfies the followings:
\begin{enumerate}

\item 
The interior $V^\circ$ is a path-connected smooth $d$-manifold, and

\item
 $V$ has finite path-connected subspaces 
$V_\alpha$, $(\alpha = 1, \cdots, \ell)$
such that
\begin{enumerate}
\item $V\setminus V^\circ$ is decomposed by $V_\alpha$, 
{\it{i.e.}},
$$
V\setminus V^\circ = \coprod_{\alpha = 1}^\ell V_\alpha,
$$

\item Each $V_\alpha$ is a path-connected 
smooth $k$-manifold in $\Omega$ $(k<d)$.
\end{enumerate}
\end{enumerate}
We say that $V_\alpha$ is a {\it{singular-boundary}} of $V$ and
let their union $V\setminus V^\circ$ denoted by 
$\partial_\sing V := V\setminus V^\circ$.
\end{definition}

Here the disjoint union is denoted by $\coprod$, {\it{i.e.}},
for subsets $A$ and $B$  of $\Omega$, 
$A \coprod B := A \bigcup B$
 if $A \bigcap B = \emptyset$.

By letting $V^{(n)}:=V$ and
$V^{[k]}:=\{V_\alpha  \subset V\ | \ \dim V_\alpha \le k\}$,
and by picking up an appropriate path-connected part $V^{(k)}\subset V^{[k]}$
each $k$,
we can find a natural stratified structure,
$$
V^{(n)} \supset V^{(n-1)} \supset \cdots \supset 
V^{(2)} \supset V^{(1)} \supset V^{(0)},
$$
which is known as a stratified submanifold in the 
singularity theory \cite{AGLV}. 

In terms of 
path-connected interior smooth $d$-manifolds, we
express subregions corresponding to materials in a regions
$\Omega \subset \EE^3$ as extensions of 
$B$ and $B^c$ in Section \ref{subsec:fourOne}.

\begin{definition} \label{def:5-2}
For a smooth domain $\Omega \subset \EE^3$,
we say that $N$ path-connected interior smooth $3$-manifolds 
$\{ B_a\}_{a=0, \cdots, N-1}$
are {\it{colored decomposition of $\Omega$}} 
if $\{ B_a\}_{a=0, \cdots, N-1}$ satisfy the followings:
\begin{enumerate}
\item every $B_a$ is a closed subset in $\Omega$,

\item $\Omega = \bigcup_{a=0, \cdots, N-1} B_a$, and

\item  $\Omega \setminus (
\bigcup_{a <b } B_a \cap B_b) =
 \coprod_{a=0, \cdots, N-1} B_a^\circ$.
\end{enumerate}
\end{definition}

Roughly speaking, each $B_a$ corresponds to a material in $\Omega$;
Definition \ref{def:5-2}
1.  means that $B_a$ is surrounded by singular boundary or the boundary
of $\Omega$,  2. implies that there is no {\lq\lq}vacuum{\rq\rq} 
in $\Omega$ and 3. guarantees that the interiors of these materials 
don't overlap.

\bigskip
In general,
for $ a \neq b$, $ B_a \cap B_b$ is a singular geometrical
object if it is not the empty set.
Singularity basically makes its treatment difficult in mathematics.
In order to avoid such difficulties, we introduce 
  color functions $\xi_a(x)$ $(a=0, 1, 2, \cdots, N-1)$ 
over a region $\Omega$, which are modeled on $\xi$ and $\xi^c$
as in Section \ref{subsec:fourOne},
are controlled by
a small parameter $\epsX > 0$ and approximate 
the characteristic functions over $B_a$.

\bigskip
To define color functions $\xi_a(x)$ $(a=0, 1, 2, \cdots, N-1)$, 
 we introduce another geometrical object, 
{\it{$\epsilon$-tubular neighborhood}} in $\EE^3$:
\begin{definition} \label{def:5-3}
For a closed subspace $U \subset \Omega$ and a positive number $\epsilon$, 
{\it{$\epsilon$-tubular neighborhood $T_{U, \epsilon}$ of $U$}} is 
defined by
$$
 T_{U, \epsilon} := \{ x \in \Omega \ |\ \dist(x, U) < \frac{\epsilon}{2}\},
$$
where $\dist(x, U)$ is the distance between $x$ and $U$ in $\EE^3$.
\end{definition}

We assume that each $T_{\partial_\sing B_a, \epsilon}$ has a
fiber structure over $\partial_\sing B_a$ as topological manifolds
as mentioned in Section \ref{sec:two-one}.
Using the $\epsilon$-tubular neighborhood, we define 
$\epsX$-controlled color functions.

\begin{definition} \label{def:5-4}
We say that 
$N$ smooth non-negative functions  $\{ \xi_a\}_{a=0, \cdots, N-1}$ 
over $\Omega \subset \EE^3$ are
{\it{$\epsX$-controlled color functions associated with
a colored decomposition $\{ B_a\}_{a=0, \cdots, N-1}$ of $\Omega$}},
if they satisfy the followings:
\begin{enumerate}
\item $\xi_a$ belongs to $\cC^\infty(\Omega)$ and for $x \in \Omega$,
$$
   \sum_{a=0, 1 \cdots, N-1} \xi_a(x) \equiv 1.
$$
\item For every $ M_a := \supp(\xi_a)$ and  
          $L_a := \supp(1-\xi_a)$, $(a=0, 1, \cdots, N-1)$,
\begin{enumerate}
 \item $B_a \varsubsetneqq M_a$,

 \item $L_a^c \varsubsetneqq B_a$,

 \item $(M_a \setminus L_a^c)^\circ \subset T_{\partial_\sing B_a, \epsX}$,

 \item $(M_a \setminus L_a^c)^\circ \supset  \partial_\sing B_a$.
\end{enumerate}

\item For $x \in (M_a \setminus L_a^c)$,
 we define the smooth function $q_a$ by
$$
	q_a(x) = \left\{ \begin{matrix} 
            \dist(x, \partial_\sing B_a), & x \in (M_a \setminus B_a), \\
            -\dist(x, \partial_\sing B_a), & 
       \mbox{ otherwise. }
     \end{matrix} \right. 
$$
Then for the flow $\exp( - t \frac{\partial}{\partial q_a})$
on $\cC^\infty(\Omega) |_{ (M_a \setminus L_a^c)}$,
$\xi_a$ monotonically increases along $t \in U \subset \RR$ at
$x \in  (M_a \setminus L_a^c)$.
\end{enumerate}
When $(M_a \setminus L_a^c)^\circ = T_{ \partial_\sing B_a, \epsX}$
for every $a$,
 $\{ \xi_a\}_{a=0, \cdots, N-1}$ are called 
{\it{proper $\epsX$-controlled color functions associated with
the colored decomposition of $\Omega \subset \EE^3$,
$\{ B_a\}_{a=0, \cdots, N-1}$}} or merely {\it{proper}}.
\end{definition}

\bigskip
The functions $\xi_a$'s are, geometrically, the partition of unity 
\cite[I p.272]{KN} 
and a quasi-characteristic function, roughly speaking, which
is equal to $1$ in the far inner side of $B_a$, vanishes at the 
far outer side of
$B_a$ and monotonically behaves in the artificial intermediate region.
Noting that the flow $\exp( - t \frac{\partial}{\partial q})$ corresponds
to the flow from the outer side to the inner side,  
$\xi_a$ decreases from the inner side to the outer side.
\bigskip

From here, let us go on to use the notations $B_a$, $M_a$, $L_a$, and
$\xi_a$ in Definition \ref{def:5-4}.
Further for later convenience, we employ the following assumptions
which are not essential in our theory but make the arguments simpler. 
\begin{assumption}\label{assump:one} {\rm{
We assume the following:
\begin{enumerate}
\item {\it{The colored decomposition $\{ B_a\}_{a=0, \cdots, N-1}$ of $\Omega$
and $\epsX$ satisfy the condition
that every $L_a^c$ is not the empty set.}}

This assumption means that the singularities that we consider can be resolved 
by the above procedure. Since $\epsX$ can be small enough,
this assumption does not have crucial effects on our theory.

\item {\it{The colored decomposition $\{ B_a\}_{a=0, \cdots, N-1}$ of $\Omega$
and $\epsX$ satisfy the relation,
$$
	\partial \Omega \bigcap
        \left(\bigcup_{a \neq b; a, b \neq 0} M_a\bigcap M_b\right)
        = \emptyset,
$$
and every intersection $B_a \bigcap B_0$ perpendicularly intersects
with $\partial \Omega$.}}

This describes the asymptotic behavior of the materials.
For example $M_0$ will be assigned to a wall in Section \ref{sec:VOF}.
This assumption is neither so essential in this
model but makes the arguments easy of the boundary effect. 
As mentioned in Section \ref{sec:three},
we neglect the boundary effect because we are concerned only with
a local theory or differential equations. 
If one wishes to remove this assumption, he could consider smaller
region $\Omega'\subset \Omega$ after formulates the problems in 
$\Omega$.

\item
{\it{ The volume of every $B_a$, the 
area of every $\partial_\sing B_a$,
and the length defined over every one-dimensional object
 in $\partial_\sing B_a$ 
are finite.}}

As our theory is basically local, this assumption is not essential, either.
\end{enumerate}
}}
\end{assumption}

Under the assumptions,
we fix  colored decomposition $\{ B_a\}_{a=0, \cdots, N-1}$ and 
 $\epsX$-controlled color functions $\{ \xi_a\}_{a=0, \cdots, N-1}$.

As mentioned in the previous section, we have an approximate
description of the area of $\partial_\sing B_a$.

\begin{proposition} \label{prop:5-1}
By letting
the area of $\partial_\sing B_a$ be $\cA_a$, the integral
$$
     \cA_{\xi_a}:=\int_{\Omega} |\nabla \xi_a| d^3 x,
$$
approximates $\cA_a$ by
$$
	|\cA_{\xi_a} - \cA_a | < \epsX \cA_a.
$$
\end{proposition}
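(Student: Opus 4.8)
The plan is to reduce Proposition \ref{prop:5-1} to the two-phase case already established earlier in the excerpt. The earlier Proposition (the one giving $|\cA_\xi - \cA| < \epsX \cdot \cA$) asserts exactly this estimate for a single surface $S = \partial B$ resolved by a single color function $\xi$. Since the color functions $\{\xi_a\}$ of Definition \ref{def:5-4} are each modeled on the two-phase $\xi$ of Section \ref{subsec:fourOne}, the content of the present proposition is essentially that the two-phase estimate applies verbatim to each individual $\xi_a$, with $S$ replaced by $\partial_\sing B_a$ and $\cA$ by $\cA_a$. First I would observe that, by Definition \ref{def:5-4}, the integrand $|\nabla \xi_a|$ is supported in $(M_a \setminus L_a^c)^\circ \subset T_{\partial_\sing B_a, \epsX}$, so the integral $\cA_{\xi_a}$ localizes to a tubular neighborhood of the singular boundary $\partial_\sing B_a$.

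Next I would set up the coarea-type computation in the normal coordinate $q_a$ of Definition \ref{def:5-4}. Away from the lower-dimensional strata of $\partial_\sing B_a$, the neighborhood $T_{\partial_\sing B_a, \epsX}$ carries the fiber structure assumed just before Definition \ref{def:5-4}, so we may write $d^3 x = \tilde e\, ds^1 ds^2 dq_a$ with $\tilde e$ the Jacobian between $(s^1, s^2, q_a)$ and the Cartesian coordinates, exactly as in Subsection \ref{subsec:fourOne}. Since $\xi_a$ depends monotonically on $q_a$ (part 3 of Definition \ref{def:5-4}) and interpolates from $0$ to $1$ across the layer of width $\epsX$, the fiber integral $\int |\partial_{q_a} \xi_a|\, dq_a = 1$ collapses, and $\cA_{\xi_a}$ becomes an integral over the codimension-one part of $\partial_\sing B_a$ of the area element distorted by $\tilde e$. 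The distortion factor differs from $1$ by a term of order $\epsX \kappa_a$ (the mean curvature of $\partial_\sing B_a$), which after integration yields the relative error bound $|\cA_{\xi_a} - \cA_a| < \epsX \cA_a$; this is the same mechanism that produced the two-phase estimate.

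The main obstacle, and the genuinely new feature relative to the two-phase proposition, is that $\partial_\sing B_a$ is no longer a smooth closed surface but a stratified object $\partial_\sing B_a = \coprod V_\alpha$ carrying lower-dimensional strata (the edges and vertices of the multiple junctions) by Definitions \ref{def:5-1} and \ref{def:5-2}. Near these strata the normal fiber structure over a single smooth sheet degenerates and the coordinate $q_a$ as a signed distance is not differentiable, so the clean fiber integration above fails on a set of positive measure inside the tube. The way I would handle this is dimensional: by Assumption \ref{assump:one}.3 the length of the one-dimensional strata and the cardinality of the zero-dimensional strata are finite, so the volume of the portion of $T_{\partial_\sing B_a, \epsX}$ within distance $\epsX$ of these lower strata is $O(\epsX^2)$ in the relevant codimension — of strictly higher order in $\epsX$ than the leading $O(\epsX)$ layer around the two-dimensional sheets. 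Hence the anomalous contributions from the junctions are absorbed into the stated $\epsX \cA_a$ error and do not affect the leading estimate.

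Finally I would assemble these pieces: split $\cA_{\xi_a}$ into the contribution from the product (smooth-sheet) region, where the two-phase computation applies directly, and the contribution from the neighborhoods of the lower strata, bounded crudely using $|\nabla \xi_a| \lesssim \epsX^{-1}$ together with the volume estimate above. Both the leading distortion term and the stratum-neighborhood term are then bounded by $\epsX \cA_a$, giving $|\cA_{\xi_a} - \cA_a| < \epsX \cA_a$. I would emphasize in the write-up that the only substantive input beyond the two-phase proposition is the stratified geometry, and that Assumption \ref{assump:one} is exactly what guarantees the lower-dimensional strata contribute only higher-order corrections.
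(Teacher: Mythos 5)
Your proposal is sound, and in fact it supplies an argument that the paper itself never gives: Proposition \ref{prop:5-1} is stated with no proof at all, presented as the evident multi-phase analogue of the (equally unproven) two-phase bound $|\cA_\xi - \cA| < \epsX \cA$ from Section 4. Your reduction to that case --- localization of $|\nabla \xi_a|$ to $T_{\partial_\sing B_a, \epsX}$, fiber integration in $q_a$ using monotonicity so that $\int |\partial_{q_a}\xi_a|\, dq_a = 1$, and a tube-formula estimate of the Jacobian distortion --- is exactly the intended mechanism, and your identification of the stratified structure of $\partial_\sing B_a$ as the only genuinely new ingredient relative to the two-phase proposition is correct.

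One order-counting slip deserves repair. What is of strictly higher order near the lower strata is the \emph{volume} of their $\epsX$-neighborhoods ($O(\epsX^2 \cdot \Length)$ around the edges, versus $O(\epsX \cA_a)$ for the layer over the two-dimensional sheets), not their \emph{contribution} to the integral: after multiplying by $|\nabla \xi_a| \lesssim \epsX^{-1}$, the edge contribution is $O(\epsX \cdot \Length)$, which is the same order in $\epsX$ as the error budget $\epsX \cA_a$ itself, not higher. Absorbing it into the stated bound therefore requires the implicit assumption that the total length of the one-dimensional strata of $\partial_\sing B_a$ is dominated by $\cA_a$, in the same units in which the curvature radii are of order one --- the latter being the implicit normalization your curvature-distortion term $O(\epsX \kappa_a)$ also needs. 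This is not a defect relative to the paper, since the inequality as stated cannot hold without such hypotheses; indeed, for the analogous surface-energy estimate of Proposition \ref{prop:5-2}, the paper's own Remark \ref{rmk:5-1} keeps the $\epsX \cdot \Length$ contributions explicit in the constant $M$ rather than absorbing them into an area term. Your write-up would be cleaner (and more honest than the paper) if you did the same: conclude $|\cA_{\xi_a} - \cA_a| \le \epsX \left( C_1 \cA_a + C_2 \Length(\partial_\sing B_a) \right)$ and then state the geometric hypotheses under which this collapses to the asserted $\epsX \cA_a$.
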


Here we notice that 
$M_{ab}:=M_a \bigcap M_b$ $(a, b = 0, 1, 2, \cdots, N-1, a\neq b)$ means
the intermediate region whose interior
is  a $3$-manifold.
Similarly we define $M_{abc}:=M_a \bigcap M_b \bigcap M_c$
$(a, b, c = 0, 1, 2, \cdots, N-1; a\neq b, c; b\neq c)$ and so on.
Since the relation, $\bigcup M_a = \Omega$, holds,
we look on the intersections of $M_a$ as an approximation of
the intersections of $B_a$ which is parameterized by $\epsX$.
Even though there exist some singular geometrical
objects in $\{B_a\}$ \cite{AGLV},
we can avoid its difficulties due to finiteness of $\epsX$.
(We expect that the
computational result of a physical process 
might have weak dependence on $\epsX$
which is small enough.
More precisely the actual value is obtained by the extrapolation of 
$\epsX = 0$ for series results of different $\epsX$'s
approaching to $\epsX = 0$.)

\subsection{Surface energy}

Let us define the surface energy $\cE_\exact^{(N)}$
by 
$$
\cE_\exact^{(N)} = \sum_{a > b} \sigma_{ab} \Area(B_a \bigcap B_b),
$$  
where $\sigma_{ab}$ is the surface tension coefficient
$(\sigma_{ab}>0$, $\sigma_{ab} = \sigma_{b a})$ between
the materials corresponding to $B_a$ and $B_b$,
and $\Area(U)$  is the area of an interior smooth $2$-manifold $U$.

We have an approximation of the surface energy $\cE_\exact^{(N)}$
by the following proposition.
\begin{proposition} \label{prop:5-2}
The free energy,
\begin{equation}
	\cE^{(N)} = 
  \sum_{a > b}
\sigma_{ab}\int_\Omega d^3x\ 
\sqrt{|\nabla \xi_a(x)| |\nabla \xi_b(x)|}
(\xi_a(x) + \xi_b(x)),
\label{eq:cEN}
\end{equation}
has a positive number $M$ such that
$$
 |\cE^{(N)} - \cE_\exact^{(N)}| < \epsX M.
$$
\end{proposition}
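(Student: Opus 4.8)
The plan is to reduce the global estimate to a pairwise one and then to split each pairwise integral according to the local phase structure. By the triangle inequality it suffices to produce, for each pair $a>b$, a constant $M_{ab}$ independent of $\epsX$ with
$$
\left|\int_\Omega \sqrt{|\nabla \xi_a||\nabla \xi_b|}\,(\xi_a+\xi_b)\,d^3x - \Area(B_a\cap B_b)\right| < \epsX\, M_{ab},
$$
after which one sets $M=\sum_{a>b}\sigma_{ab}M_{ab}$, which is finite by Assumption \ref{assump:one}.3. The integrand is supported in $\supp(|\nabla\xi_a|)\cap\supp(|\nabla\xi_b|)$, which by Definition \ref{def:5-4} lies in $T_{\partial_\sing B_a,\epsX}\cap T_{\partial_\sing B_b,\epsX}$, so the whole analysis is local to a tube about $B_a\cap B_b$.

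First I would isolate the genuinely two-phase part of this tube. Let $P_{ab}$ be the set of points of the support at which $\xi_c=0$ for every $c\neq a,b$, and let $J_{ab}$ be the remainder. On $P_{ab}$ the partition-of-unity identity $\sum_c\xi_c\equiv1$ gives $\xi_a+\xi_b\equiv1$, hence $\nabla\xi_a=-\nabla\xi_b$ and therefore $\sqrt{|\nabla\xi_a||\nabla\xi_b|}=|\nabla\xi_a|$; the full integrand collapses to $|\nabla\xi_a|$ exactly. Integrating $|\nabla\xi_a|$ over $P_{ab}$ is then precisely the situation of Proposition \ref{prop:5-1} applied to the two-phase portion of $\partial_\sing B_a$: the co-area structure (the Dirac delta along $q_a$ being integrable with the Heaviside function as its primitive) gives $\int_{P_{ab}}|\nabla\xi_a|\,d^3x = \Area(P_{ab}\cap B_a\cap B_b)+O(\epsX)$.

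The two remaining pieces are both $O(\epsX)$ for geometric reasons. The junction set $J_{ab}$ sits in the $\epsX$-tubular neighborhood of the lower-dimensional strata $B_a\cap B_b\cap B_c$ (triple-junction curves) and of the even smaller quadruple strata; since $\xi_a$ varies from $0$ to $1$ across a layer of thickness of order $\epsX$, one has the pointwise bound $\sqrt{|\nabla\xi_a||\nabla\xi_b|}\le C/\epsX$, while the finite-length hypothesis of Assumption \ref{assump:one}.3 forces $\mathrm{vol}(J_{ab})=O(\epsX^2)$, so $\int_{J_{ab}}\sqrt{|\nabla\xi_a||\nabla\xi_b|}(\xi_a+\xi_b)\,d^3x=O(\epsX)$. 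Likewise the area discarded in passing from $P_{ab}\cap B_a\cap B_b$ to the full interface $B_a\cap B_b$ is a strip of width $O(\epsX)$ along those same finite-length junction curves, hence of area $O(\epsX)$. Collecting the three estimates through the triangle inequality yields the pairwise bound, and summation gives the Proposition.

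The step I expect to be delicate is the control of $J_{ab}$: it requires simultaneously the uniform gradient bound $|\nabla\xi_a|\le C/\epsX$ (which rests on the monotone transverse profile of the color functions in Definition \ref{def:5-4}.3 and the scale $\epsX$ of the transition layer) and the volume estimate $\mathrm{vol}(J_{ab})=O(\epsX^2)$ for tubes about the one-dimensional junction strata. Making these two scalings compatible, and verifying that the constants depend only on the fixed geometry of $\{B_a\}$ and not on $\epsX$, is where Assumption \ref{assump:one} does the essential work; the pure two-phase region, by contrast, needs nothing beyond the already-established Proposition \ref{prop:5-1}.
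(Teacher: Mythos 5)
Your proof is correct and follows essentially the same route as the paper: your set $P_{ab}$ is exactly the paper's proper region $M_{ab}^\prop$ of (\ref{eq:Mabprop}) (since $\sum_c\xi_c\equiv 1$ and $\xi_c\ge 0$ force $\xi_a+\xi_b=1$ precisely where all other $\xi_c$ vanish), your collapse of the integrand to $|\nabla\xi_a|$ is the paper's identity (\ref{eq:sym}), and the reduction to Proposition \ref{prop:5-1} on that region plus an $O(\epsX)$ junction remainder is the paper's argument. The only divergence is bookkeeping at the junctions: the paper claims the finer estimate that the contribution of $M_{abc}^\prop$ of (\ref{eq:Mabcprop}) enters only at order $\epsX^2$ (hence the sharper bound on $M$ in Remark \ref{rmk:5-1}), whereas you bound the junction integral and the discarded strip separately by $O(\epsX)$ using the pointwise bound $|\nabla\xi_a|\le C/\epsX$ --- which suffices for the stated proposition, though that gradient bound is the intended picture rather than a literal consequence of Definition \ref{def:5-4}, so it is an implicit hypothesis in your argument (as it is, tacitly, in the paper's own junction estimate).
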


\begin{proof}
For $a\neq b$, 
$B_a \bigcap B_b$ consists of the union of some interior smooth
$2$-manifolds. Their singular-boundary parts
$\partial_\sing (B_a \bigcap B_b)
\equiv \{V_\alpha\}_{\alpha \in I_{ab}}$ are
union of some smooth $1$-manifolds and
smooth $0$-manifolds.
Thus  $\{V_\alpha\}_{\alpha \in I_{ab}}$ has no 
effect on the surface energy $\cE_\exact^{(N)}$
because they are measureless.

Over the subspace,
\begin{equation}
M_{ab}^\prop := \{ x \in M_{ab} \ | \ 
\xi_a(x) + \xi_b(x) = 1 \}^\circ,
\label{eq:Mabprop}
\end{equation}
and for a positive number $\ell$, we have identities, 
\begin{equation}
\begin{split}
          |\nabla \xi_a(x)| (\xi_a(x) + \xi_b(x))^{\ell}
         & = |\nabla \xi_b(x)| (\xi_a(x) + \xi_b(x))^{\ell}\\
         & = \sqrt{|\nabla \xi_a(x)| |\nabla \xi_b(x)|}
(\xi_a(x) + \xi_b(x))^{\ell}.\\
\end{split}
\label{eq:sym}
\end{equation}
The sum of the integrals over $M_{ab}^\prop$ dominates $\cE^{(N)}$
if $\epsX$ is sufficiently small.

We evaluate the remainder.
For example, for different $a$, $b$ and $c$, 
the part in $\cE^{(N)}$ coming from
\begin{equation}
M_{abc}^\prop := \{ x \in M_{abc} \ | \ 
               \xi_a(x) + \xi_b(x)  + \xi_c(x) = 1 \}^\circ
\label{eq:Mabcprop}
\end{equation}
is order of $\epsX^2$.  Namely we have
\begin{equation*}
\begin{split}
&\left|\int_{M_{abc}} d^3x\ 
\sqrt{|\nabla \xi_a(x)| |\nabla \xi_b(x)|}
(\xi_a(x) + \xi_b(x)) 
-\Length(B_a\cap B_b \cap B_c) \right|\\
&\qquad\qquad\qquad <  \epsX^2 \Length(B_a\cap B_b \cap B_c),\\
\end{split}
\end{equation*}
where $\Length(C)$ is the length of a curve $C$.
Thus we find a number $M$ satisfying the inequality.
\qed
\end{proof}

\begin{myremark}\label{rmk:5-1} {\rm{
\begin{enumerate}
\item
$M$ is bound by
$$
	M \le \max(\sigma_{ab})\left(
 \sum_{a<b}\left(\Area(B_a\cap B_b) 
+ \epsX 
 \Length\left(\partial_\sing(B_a\cap B_b)\right)\right) + K \epsX^2\right),
$$
where $K$ is the number of isolated points in all
of singular-boundary parts of $\{B_a\}$.

\item
It should be noted that $\cE^{(N)}$ becomes the surface
energy of the system exactly when $\epsX$ vanishes.

\item
Using the identities (\ref{eq:sym}), we can also approximate 
$\cE^{(N)}$ by 
\begin{equation*}
  \sum_{a> b}
\sigma_{ab}\int_\Omega d^3x\ 
|\nabla \xi_a(x)| (\xi_a(x) + \xi_b(x))^\ell,
\end{equation*}
using a positive number $\ell$. In such a way, there are so many 
variants which, approximately, represent the surface energy 
in terms of $\xi_a$'s.
\end{enumerate}
}}
\end{myremark}

\subsection{Statics}
Let us consider the statics of the multi-phase fields.
In the above arguments in this section,
we have given the geometrical objects, first, and
defined the functions $\xi_a$, functional energy
$\cE^{(N)}$ and so on.  
In this subsection on
the static mechanics of the multi-fields, 
we consider the deformation of these geometrical objects
and determine a configuration whose corresponds to an extremal point
of the functional, {\it{i.e.}}, $\cF_\mul$ in the following
proposition. In other words, we derive the Euler-Lagrange equation
which governs the extremal point of the functional
and characterizes the configuration of $M_a$, $L_a$ and
approximately $B_a$ for every $a = 0, \cdots, N-1$.

Let us introduce the proper pressure
\begin{equation}
	p_P(x) := \sum p_a \xi_a(x),
\label{eq:proppress}
\end{equation}
where $p_a$ is a certain pressure of each material.

\begin{proposition}
\label{prop:5-8}
The Euler-Lagrange equation
of the static free energy integral,
\begin{equation*}
	\cF_\mul= \int_{\Omega}\left( 
\sum_{a> b}
\sigma_{ab} 
\sqrt{|\nabla \xi_a(x)| |\nabla \xi_b(x)| }
(\xi_a(x) + \xi_b(x)) - p_P
                \right) d^3 x,
\end{equation*}
 with respect to $\xi_a$, {\it{i.e.}},
 $\delta \cF_\mul/\delta \xi_a = 0$, is given as follows:
\begin{enumerate}
\item
For a point $ x \in {M_{ab}}^\prop$ of (\ref{eq:Mabprop}),
\begin{equation}
        (p_a - p_b) - \sigma_{ab}\kappa_a (x)= 0, 
\label{eq:ELM2}
\end{equation}
where 
$$
	\kappa_a := - \partial_i \frac{\partial_i \xi_a}{|\nabla\xi_a|}.
$$

\item
For a point $x \in M_{abc}^\prop$ of (\ref{eq:Mabcprop}),
\begin{equation}
        (p_a - p_b  - p_c) - \tilde\kappa_{abc} (x)= 0, 
\label{eq:ELM3}
\end{equation}
where 
\begin{equation}
\begin{split}
 \tilde\kappa_{abc} &:= 
\sigma_{bc}\sqrt{|\nabla \xi_b(x)| |\nabla \xi_c(x)| }
-\sigma_{ab}\sqrt{|\nabla \xi_a(x)| |\nabla \xi_b(x)| }
-\sigma_{ac}\sqrt{|\nabla \xi_a(x)| |\nabla \xi_c(x)| }\\
& + \partial_i 
\left[\frac{\partial_i \xi_a}{\sqrt{|\nabla\xi_a|}^3}.
\left(\sigma_{ab} \sqrt{|\nabla\xi_b|}( \xi_a + \xi_b)+
\sigma_{ac} \sqrt{|\nabla\xi_c|}( \xi_a + \xi_c) \right) \right].
\end{split}
\end{equation}
\end{enumerate}
\end{proposition}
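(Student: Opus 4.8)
The plan is to apply the Euler--Lagrange machinery of Proposition \ref{prop:A-1} to $\cF_\mul$ exactly as was done for Propositions \ref{prop:4-4} and \ref{prop:4-5}, the only genuinely new feature being that the color functions are not independent: by item 1 of Definition \ref{def:5-4} they are tied by the partition-of-unity constraint $\sum_a \xi_a \equiv 1$. The organizing observation is that this constraint acts \emph{locally}: at a point of $M_{ab}^\prop$ only $\xi_a$ and $\xi_b$ are nonzero with $\xi_a+\xi_b=1$, while at a point of $M_{abc}^\prop$ only $\xi_a,\xi_b,\xi_c$ are nonzero with $\xi_a+\xi_b+\xi_c=1$. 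Since the singular-boundary parts $\partial_\sing(B_a\cap B_b)$ are measureless (as already used in the proof of Proposition \ref{prop:5-2}), it suffices to derive the variational equation stratum by stratum, treating $\cF_\mul$ on each stratum as a functional of only the active phases subject to the corresponding linear constraint.

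First I would dispatch the two-phase stratum $M_{ab}^\prop$ of (\ref{eq:Mabprop}). Here I substitute $\xi_b=1-\xi_a$ \emph{before} varying, so that $\nabla\xi_b=-\nabla\xi_a$ and hence $|\nabla\xi_a|=|\nabla\xi_b|$; by the identity (\ref{eq:sym}) the geometric-mean density collapses, $\sqrt{|\nabla\xi_a||\nabla\xi_b|}(\xi_a+\xi_b)=|\nabla\xi_a|$, and the proper pressure reduces to $p_P=p_b+(p_a-p_b)\xi_a$. The integrand is then literally the two-phase integrand of (\ref{eq:ELs0}), so Proposition \ref{prop:A-1} together with Proposition \ref{prop:2-1}, i.e.\ the computation of Proposition \ref{prop:4-4}, reproduces (\ref{eq:ELM2}) with $\kappa_a=-\partial_i(\partial_i\xi_a/|\nabla\xi_a|)$.

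The substantial step, which I expect to be the main obstacle, is the triple stratum $M_{abc}^\prop$ of (\ref{eq:Mabcprop}). Now all three densities $\sqrt{|\nabla\xi_a||\nabla\xi_b|}$, $\sqrt{|\nabla\xi_a||\nabla\xi_c|}$, $\sqrt{|\nabla\xi_b||\nabla\xi_c|}$ are present and, crucially, the constraint $\xi_a+\xi_b+\xi_c=1$ no longer equalizes the three gradients, so the pleasant collapse of the two-phase case is unavailable and the geometric means must be differentiated directly. Varying with respect to $\xi_a$ and using the constraint to rewrite the surviving combinations (so that the symmetric $(b,c)$ term becomes $\sigma_{bc}\sqrt{|\nabla\xi_b||\nabla\xi_c|}(1-\xi_a)$, while the $(a,b)$ and $(a,c)$ terms keep $\xi_a+\xi_b$ and $\xi_a+\xi_c$), the value part $\delta\cF_\mul/\delta\xi_a$ of Proposition \ref{prop:A-1} should produce the geometric-mean combination $\sigma_{bc}\sqrt{|\nabla\xi_b||\nabla\xi_c|}-\sigma_{ab}\sqrt{|\nabla\xi_a||\nabla\xi_b|}-\sigma_{ac}\sqrt{|\nabla\xi_a||\nabla\xi_c|}$ together with the pressure combination $p_a-p_b-p_c$, the asymmetry reflecting that $\xi_a$ is the varied phase while $\xi_b,\xi_c$ are linked to it through the constraint.

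The remaining work is the gradient part $\partial_i\bigl(\delta\cF_\mul/\delta(\partial_i\xi_a)\bigr)$. The only terms carrying $\partial_i\xi_a$ are the $(a,b)$ and $(a,c)$ densities, and differentiating $\sqrt{|\nabla\xi_a|}=|\nabla\xi_a|^{1/2}$ with respect to $\partial_i\xi_a$ yields the factor $\partial_i\xi_a/\sqrt{|\nabla\xi_a|}^3$ whose coefficient is exactly $\sigma_{ab}\sqrt{|\nabla\xi_b|}(\xi_a+\xi_b)+\sigma_{ac}\sqrt{|\nabla\xi_c|}(\xi_a+\xi_c)$; taking the divergence gives the last line of $\tilde\kappa_{abc}$, and assembling the value and divergence parts yields (\ref{eq:ELM3}). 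The delicate points I would watch are the sign bookkeeping coming from the $\nabla\xi_b=-\nabla\xi_a$ type relations, the careful numerical factor from differentiating the square root, the correct use of the constraint to turn the symmetric $(b,c)$ contribution and the individual pressures into the asymmetric combinations singling out the varied phase $a$, and the fact that, exactly as in Remark \ref{rmk:4-0}, the gradient term remains well-defined even where a denominator threatens to vanish because $|\partial_i\xi_a|\le|\nabla\xi_a|$.
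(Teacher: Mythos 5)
Your proposal takes essentially the same route as the paper's own proof: it works stratum by stratum, uses the local form of the partition-of-unity constraint to rewrite the integrand (in particular turning the $(b,c)$ term into $\sigma_{bc}\sqrt{|\nabla \xi_b||\nabla \xi_c|}\,(1-\xi_a)$ on $M_{abc}^\prop$ while keeping the $(\xi_a+\xi_b)$ and $(\xi_a+\xi_c)$ factors in the other two terms), and then applies the Euler--Lagrange equation of Proposition \ref{prop:A-1}, which is exactly the paper's argument. Your two-phase step, collapsing the geometric mean via (\ref{eq:sym}) so that the computation reduces to that of Proposition \ref{prop:4-4}, is likewise what the paper does implicitly, so the two proofs coincide in substance.
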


\begin{proof}
For a point $ x \in {M_{ab}}^\prop$ of (\ref{eq:Mabprop}),
we have  $ \xi_a(x) + \xi_b(x) = 1 $, and thus  the
Euler-Lagrange equation
(\ref{eq:AppeA1}) leads (\ref{eq:ELM2}).

Similarly for a point $ x \in {M_{abc}}^\prop$ of (\ref{eq:Mabcprop}),
we have  $ \xi_a(x) + \xi_b(x) + \xi_c(x)= 1 $, and thus  the
concerned terms of the integrand in the energy functional are given by
\begin{equation}
\begin{split}
\cdots &
+\sqrt{|\nabla \xi_a(x)| |\nabla \xi_b(x)| }(\xi_a + \xi_b)
+\sqrt{|\nabla \xi_a(x)| |\nabla \xi_c(x)| }(\xi_a + \xi_b) \\
&+\sqrt{|\nabla \xi_b(x)| |\nabla \xi_c(x)| }(1 - \xi_a) +\cdots. \\
\end{split}
\end{equation}
The Euler-Lagrange equation 
(\ref{eq:AppeA1})
gives (\ref{eq:ELM3}).
\qed
\end{proof}

\begin{myremark} 
 {\rm{
\begin{enumerate} 

\item  It is noticed that 
(\ref{eq:ELM2}) agrees with the Laplace equation (\ref{eq:ELs}) and thus
we also reproduce the Laplace equation locally.

\item (\ref{eq:ELM3})
could be regarded as another generalization of the Laplace equation
though $M_{abc}^\prop$ does not contribute to the surface energy
when $\epsX$ vanishes and has a negligible effect even for
a finite $\epsX$ if $\epsX$ is sufficiently small.
Indeed, (\ref{eq:ELM3}) does not appear in the theory of surface tension
\cite{LL}. However 
(\ref{eq:ELM3}) is necessary and plays a role
 to guarantee the stability
in the numerical computations
and to preserve the consistency in numerical approach with
finite intermediate regions for $\epsX \neq 0$.

\item Similarly we have similar equations
for a higher intersection regions.

\end{enumerate} 
}}
\end{myremark}

As a generalization of (\ref{eq:Amini0})
we immediately have the following.

\begin{proposition} \label{prop:5-9}
For every point $x \in \Omega$,
the variational principle, $\delta \cF_\mul/ \delta x^i = 0$,
gives 
\begin{equation}
\begin{split}
           & \partial_i p_P 
   -\sum_{a > b} \sigma_{a,b}\Bigr[
           \partial_i \left(
\sqrt{|\nabla \xi_a| |\nabla \xi_b|}
 (\xi_a+\xi_b) \right) \\
    &
       - \partial_j\left(
         \frac{\partial_i \xi_a \partial_j \xi_a }
              {\sqrt{|\nabla \xi_a|}^{3}}
              \sqrt{|\nabla \xi_b|}
(\xi_a+\xi_b) \right) \Bigr] = 0.
\end{split}
\label{eq:MSFe}
\end{equation}
\end{proposition}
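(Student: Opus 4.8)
The plan is to apply Noether's theorem (Proposition \ref{prop:A-2}) to the static free-energy functional $\cF_\mul$, exactly as was done for the two-phase case in Proposition \ref{prop:4-5}, and then simplify the resulting boundary term using the Euler-Lagrange equations (\ref{eq:ELM2}) and (\ref{eq:ELM3}) already established in Proposition \ref{prop:5-8}. The Lagrangian density here is
$$
\cF = \sum_{a>b}\sigma_{ab}\sqrt{|\nabla\xi_a||\nabla\xi_b|}\,(\xi_a+\xi_b) - p_P,
$$
which depends on the fields $\xi_a$ and their first derivatives $\partial_i\xi_a$, so Proposition \ref{prop:A-2} applies directly with $\phi_a$ replaced by the collection $\{\xi_a\}$ and $n=3$.

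First I would compute the two ingredients appearing in (\ref{eq:AppeA2}): the momentum-flux term $\sum_a \frac{\delta\cF}{\delta\partial_j\xi_a}\,\partial_i\xi_a$ and the term $\partial_i\cF$. The key derivative is
$$
\frac{\delta}{\delta\partial_j\xi_a}\sqrt{|\nabla\xi_a||\nabla\xi_b|}
= \frac{\partial_j\xi_a}{\sqrt{|\nabla\xi_a|}^{3}}\sqrt{|\nabla\xi_b|}\cdot\frac{1}{2}
$$
up to the symmetric treatment of the pair $(a,b)$, which is precisely what produces the $\partial_i\xi_a\,\partial_j\xi_a/\sqrt{|\nabla\xi_a|}^{3}$ structure in the second line of (\ref{eq:MSFe}). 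Carrying out the substitution into (\ref{eq:AppeA2}) then yields
$$
\frac{\delta\cF_\mul}{\delta x^i}
= \sum_j\partial_j\Bigl[\sum_{a>b}\sigma_{ab}\frac{\partial_i\xi_a\,\partial_j\xi_a}{\sqrt{|\nabla\xi_a|}^{3}}\sqrt{|\nabla\xi_b|}(\xi_a+\xi_b)\Bigr]
- \partial_i\Bigl[\sum_{a>b}\sigma_{ab}\sqrt{|\nabla\xi_a||\nabla\xi_b|}(\xi_a+\xi_b) - p_P\Bigr],
$$
and setting this to zero and rearranging gives (\ref{eq:MSFe}). As in Remark \ref{rmk:4-0}, I would note that the factor $\partial_i\xi_a/\sqrt{|\nabla\xi_a|}^{3}$ is controlled because $|\partial_i\xi_a|\le|\nabla\xi_a|$, so each term extends to all of $\Omega$ rather than only to $(\supp|\nabla\xi_a|)^\circ$.

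The main obstacle I anticipate is bookkeeping the symmetric double sum correctly: when differentiating $\sqrt{|\nabla\xi_a||\nabla\xi_b|}$ with respect to $\partial_j\xi_a$, only the $|\nabla\xi_a|$ factor contributes, so in the symmetrized sum over unordered pairs $\{a,b\}$ one must be careful to collect the contribution of each field $\xi_a$ from every pair in which it appears. The cleanest route is to invoke Proposition \ref{prop:5-8}, whose equations (\ref{eq:ELM2}) and (\ref{eq:ELM3}) serve as the Euler-Lagrange equation (\ref{eq:AppeA1}) required by Proposition \ref{prop:A-2}; substituting them absorbs the field-variation part and leaves exactly the divergence-form expression (\ref{eq:MSFe}). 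I would therefore state the proof as a direct application of Proposition \ref{prop:A-2} together with Proposition \ref{prop:5-8}, parallel to the proof of Proposition \ref{prop:4-5}, and leave the routine derivative computations to the reader.
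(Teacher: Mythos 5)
Your proposal is correct and takes essentially the same route as the paper: the paper's own proof is a one-liner stating that the result follows exactly as in Proposition \ref{prop:4-5}, i.e., by applying Noether's theorem (Proposition \ref{prop:A-2}) with the Euler--Lagrange equations of Proposition \ref{prop:5-8} playing the role of (\ref{eq:AppeA1}). Your additional care about the symmetric double-sum bookkeeping (the factor $\frac{1}{2}$ and the contributions of both $\xi_a$ and $\xi_b$ in each pair, which reduce to the asymmetric form of (\ref{eq:MSFe}) via the identities (\ref{eq:sym}) on the proper intermediate regions) is a detail the paper glosses over, but it is consistent with the paper's argument.
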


\begin{proof}
It is the same as Proposition \ref{prop:4-5},
which essentially comes from 
Proposition \ref{prop:A-2}.
\qed
\end{proof}

\begin{myremark} 
 {\rm{
In Proposition \ref{prop:5-9}, we can apply the equation without 
any classification of geometry like (\ref{eq:Mabprop}) and (\ref{eq:Mabcprop}).
It is also noted that (\ref{eq:MSFe}) is globally defined  over 
$\Omega$ as mentioned in Remark \ref{rmk:4-0}.
}}
\end{myremark}

\subsection{Dynamics}
Using these equations, let us consider
the dynamics of the multi-phase flow.
We extend the colored-decomposition of $\Omega$ and
the $\epsX$-controlled color functions of $\{\xi_a\}_{a=0,\cdots,N-1}$
to those of $\Omega \times T$ and $\cC^\infty(\Omega \times T)$
using another fiber structure of $\Coor(\Omega\times T)$.
Mathematically speaking, since our space-time is a
trivial bundle $\Omega\times T$ and has the fiber structure
$\Omega\times (t_a, t_b) \to \Omega$ for a small interval $(t_a, t_b)$
due to the integrability,
we can consider the pull-back of the map $\xi_a : \Omega \to \RR$.
If we consider a global behavior of $\xi_a$ with respect to time $t$,
we should pay more attentions on the Lagrange picture $\gamma(x,t)$
and the integrability.
However as our theory is local, we can regard
 $(t_a, t_b)$ as $T$ with an infinitesimal interval.

Thus $\xi_a$ is redefined as $\xi_a:=\xi_a(\gamma(x,t))$
for $(x,t) \in \Omega \times T$ and it is denoted by 
$\xi_a(x,t)$.
In the time development of $\xi_a$, the control parameter $\epsX$
is not necessary to be constant. However in this article,
we assume that $\epsX$ is sufficiently small for every $t \in T$.

Let the density of each $\xi_a$ be denoted by $\rho_a$.
We have the global density function $\rho(x,t)$ and pressure
$p_P(x,t)$ given by
$$
	\rho(x,t) = \sum \rho_a \xi_a(x,t), \quad
	p_P(x,t) = \sum p_a \xi_a(x,t).
$$

In contrast to the previous subsection, in this subsection,
we investigate an initial problem. In other words,
every configuration of the geometrical objects,  
$M_a$, $L_a$ and approximately $B_a$ ($a = 0, \cdots, N-1$),
with divergence free velocity $u$, ($\mathrm{div}(u)=0$)
can be an initial condition to the dynamics of the multi-phase fields.
The following equations which we will derive in this subsection
govern the deformations of these
geometrical objects as their time-development.
Further it is noticed that
in this subsection,
the proper pressure $p_P(x,t)$ has no mathematical nor
physical meaning because it becomes a part of the total pressure $p$,
which is determined by the divergence free condition $\mathrm{div}(u) =0$
as mentioned in Remark \ref{rmk:3-2}.

\bigskip

We have the first theorem;

\begin{theorem} \label{th:5-1}
The action integral of the multi-phase fields, or
the $\epsX$-controlled color functions $\xi_a$
 with physical parameters $\rho_a$, $\sigma_{ab}$, $p_a$
$(a, b = 0, 1, \cdots, N-1)$ defined above, is given by
\begin{equation}
	\cS_\mul= \int_T d t\int_{\Omega}\left( \frac{1}{2}\rho |u|^2 -
  \sum_{a> b}
\sigma_{ab} 
\sqrt{|\nabla \xi_a| |\nabla \xi_b| }
 (\xi_a + \xi_b)
               + p_P \right) d^3 x,
\label{eq:actionM}
\end{equation}
under the volume-preserving deformation.
\end{theorem}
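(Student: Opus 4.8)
The plan is to build $\cS_\mul$ by Hamilton's principle, writing the Lagrangian density as the kinematic energy minus the surface (potential) energy under the volume-preserving constraint, in exact analogy with the two-phase action $\cS_\two$ of (\ref{eq:AI2d}). First I would fix the kinematic term. By the Ebin--Marsden/harmonic-map framework of Section \ref{sec:three}, the action of an incompressible flow is the energy functional (\ref{eq:B-1}), realized here as $\cS_\free[\gamma]$, which agrees with $\int_T dt\int_\Omega \frac{1}{2}\rho|u|^2\, d^3 x$ of (\ref{eq:Eue}); the sole modification in the multi-phase setting is that the density is now the advected field $\rho=\sum_a \rho_a\xi_a$, with each $\xi_a=\xi_a(\gamma(x,t))$ dragged by the same section $g=\ee^{\tgamma^i\partial_i}$ of (\ref{eq:etgamma}), so that $\rho$ is transported by the flow rather than held constant.

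Next I would supply the potential term from Proposition \ref{prop:5-2}: the functional $\cE^{(N)}$ of (\ref{eq:cEN}) approximates the exact pairwise surface energy $\cE_\exact^{(N)}=\sum_{a>b}\sigma_{ab}\Area(B_a\cap B_b)$ to order $\epsX$, its symmetric density $\sqrt{|\nabla\xi_a||\nabla\xi_b|}(\xi_a+\xi_b)$ reproducing the correct interface area on each $M_{ab}^\prop$ of (\ref{eq:Mabprop}) while contributing only at orders $\epsX$ and $\epsX^2$ on the lower-dimensional junctions. Entering this as $-\cE^{(N)}$ yields the second term, and the proper-pressure term $+p_P=+\sum_a p_a\xi_a$ of (\ref{eq:proppress}) is the direct generalization of the two-phase $+(p_1\xi+p_2\xi^c)$; by Remarks \ref{rmk:3-2} and \ref{rmk:4-11} it is not an independent datum but is absorbed into the total pressure fixed by $\mathrm{div}(u)=0$, which is precisely the content of the phrase ``under the volume-preserving deformation.''

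I would then check internal consistency by specializing to $N=2$. Setting $\xi_1=\xi$ and $\xi_0=\xi^c=1-\xi$, the single surviving pairwise term $\sigma_{01}\sqrt{|\nabla\xi_0||\nabla\xi_1|}(\xi_0+\xi_1)$ reduces on $M_{01}^\prop$ to $\sigma_{01}|\nabla\xi|$, using the identities (\ref{eq:sym}) together with $\xi_0+\xi_1\equiv 1$ and $|\nabla\xi_0|=|\nabla\xi_1|$; hence $\cS_\mul$ collapses to $\cS_\two$ of (\ref{eq:AI2d}) after the obvious relabeling of the proper pressures, confirming that the stated action is the correct generalization.

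The main obstacle is not the algebraic assembly but the coherence of the advected data. I must verify that $\rho$ and the geometric objects $M_a$, $L_a$ and approximately $B_a$ are all transported by one and the same volume-preserving section $g$, so that a single deformation $\gamma\mapsto\gamma+\delta\gamma$ acts consistently on both the kinematic and the surface terms; this rests on the pull-back $\xi_a(\gamma(x,t))$ and the local integrability noted at the opening of this subsection, together with Assumption \ref{assump:one}, which guarantees that the singular-boundary strata of the $\partial_\sing B_a$ remain measureless for the area and therefore inject no spurious contribution into $\cS_\mul$ at leading order in $\epsX$.
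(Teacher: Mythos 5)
Your proposal is correct and follows essentially the same route as the paper: the action is assembled additively, with the kinematic term taken from the Ebin--Marsden framework of Section \ref{sec:three} (with $\rho=\sum_a\rho_a\xi_a$ advected by the flow), the surface term justified as the $\epsX$-accurate approximation of the exact interface energy via Proposition \ref{prop:5-2}, and the proper pressure $p_P$ of (\ref{eq:proppress}) entering as the term that produces the Laplace equations and is ultimately absorbed into the total pressure by the volume-preserving condition. Your additional $N=2$ consistency check recovering $\cS_\two$ of (\ref{eq:AI2d}) is a sound supplement not present in the paper's own (much terser) proof, but it does not change the underlying argument.
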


\begin{proof}
The action integral is additive.
The first term exhibits the kinematic energy of the fluids.
The second term represents the surface energy up to $\epsX$ as in
Proposition \ref{prop:5-2}.
The proper pressure $p_P$ in (\ref{eq:proppress}) leads the Laplace equations.
We can regard it as the action integral of 
the multi-phase fields with these parameters.
\qed
\end{proof}

Then we have further generalization of (\ref{eq:EL2d}) as follows:
\begin{lemma} \label{lemma:5-11}
Assume that every $M_a(t)$, $M_{ab}^\prop(t)$ and $M_{abc}^\prop(t)$ deform
for the time-development following a certain equation.
The Euler-Lagrange equation of the action integral
 with respect to $\xi_a$, $\delta \cS_\mul/ \delta \xi_a =0$,
is given, up to the volume preserving condition, as follows:
\begin{enumerate}
\item
For a point $x \in M_{ab}^\prop$, we have
\begin{equation}
	\frac{1}{2}(\rho_a - \rho_b) |u(x,t)|^2
       + (p_a - p_b) - \sigma_{ab}\kappa_a (x,t)= 0. 
\label{eq:ELM2d}
\end{equation}
\item
For a point $x \in M_{abc}^\prop$, we have
\begin{equation}
	\frac{1}{2}(\rho_a - \rho_b - \rho_c) |u(x,t)|^2
       + (p_a - p_b - p_c) - \tilde\kappa_{abc} (x,t)= 0. 
\label{eq:ELM3d}
\end{equation}
\end{enumerate}
\end{lemma}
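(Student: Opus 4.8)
The plan is to exploit the fact that the action integral $\cS_\mul$ of (\ref{eq:actionM}) contains no $\partial\xi_a/\partial t$ term, so that the Euler--Lagrange equation for $\xi_a$ decouples in time and can be obtained pointwise in $t$ by a direct application of Proposition \ref{prop:A-1}, exactly as the two--phase dynamical identity (\ref{eq:EL2d}) was obtained from its static counterpart. Holding the velocity field $u$ (equivalently $\gamma$) fixed throughout this variation, I would split the $\xi_a$--dependence of the integrand into three pieces: the surface--energy sum $\sum_{a>b}\sigma_{ab}\sqrt{|\nabla\xi_a||\nabla\xi_b|}(\xi_a+\xi_b)$, the proper pressure $p_P=\sum_c p_c\xi_c$, and the kinematic term $\tfrac12\rho|u|^2=\sum_c\tfrac12\rho_c|u|^2\,\xi_c$. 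The surface--energy and pressure pieces are precisely those already treated in the static Proposition \ref{prop:5-8}, so the only genuinely new contribution is the kinematic one.

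The key observation is that, with $u$ fixed, both $p_P$ and $\tfrac12\rho|u|^2$ are linear in the color functions $\xi_c$ with coefficients that involve neither $\xi_a$ nor its derivatives, and moreover both enter $\cS_\mul$ with the same $+$ sign. Hence the operator $\delta/\delta\xi_a$ acts on them in identical fashion, and the kinematic contribution is read off from the pressure contribution simply by the replacement $p_c\mapsto\tfrac12\rho_c|u|^2$. Since Proposition \ref{prop:5-8} shows that the surface--energy part produces $-\sigma_{ab}\kappa_a$ on $M_{ab}^\prop$ and $-\tilde\kappa_{abc}$ on $M_{abc}^\prop$, while $p_P$ contributes the differences $(p_a-p_b)$ and $(p_a-p_b-p_c)$ respectively, running the same computation on the kinematic term yields $\tfrac12(\rho_a-\rho_b)|u|^2$ and $\tfrac12(\rho_a-\rho_b-\rho_c)|u|^2$ on the two regions.

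Assembling the three contributions then gives exactly (\ref{eq:ELM2d}) on $M_{ab}^\prop$ and (\ref{eq:ELM3d}) on $M_{abc}^\prop$; equivalently, each dynamical equation is the corresponding static equation (\ref{eq:ELM2}) or (\ref{eq:ELM3}) of Proposition \ref{prop:5-8} with the kinematic correction appended, the correction carrying the same difference--of--coefficients structure as the pressure term. I would also stress that the hypothesis on the time--development of $M_a(t)$, $M_{ab}^\prop(t)$ and $M_{abc}^\prop(t)$ does not actually enter the computation: because no $\partial_t\xi_a$ appears in (\ref{eq:actionM}), the Euler--Lagrange equation for $\xi_a$ is evaluated at each fixed $t$ with $t$ playing only the role of a parameter.

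The main obstacle is the correct bookkeeping of the constraint $\sum_c\xi_c\equiv1$ on the triple region, which is responsible for the asymmetric pattern $\rho_a-\rho_b-\rho_c$ (and $p_a-p_b-p_c$) rather than a symmetric combination. I would handle this exactly as in the proof of Proposition \ref{prop:5-8}, rewriting the $bc$--coupling through $\xi_b+\xi_c=1-\xi_a$ before differentiating. The strength of the present approach is that, whatever sign combination this substitution produces for the pressure, the identical linear structure of $\tfrac12\rho|u|^2$ forces the kinematic term to inherit that same combination, so the parallel holds without my having to re-derive the constraint bookkeeping. Finally, the qualifier ``up to the volume preserving condition'' records that $p_P$ carries no independent meaning here, since it is absorbed into the total pressure $p$ fixed by $\mathrm{div}(u)=0$ as explained in Remark \ref{rmk:3-2}.
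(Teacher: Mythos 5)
Your proposal is correct and takes essentially the same route as the paper: the paper's own proof of Lemma \ref{lemma:5-11} consists of the single statement that it is the same as the proof of Proposition \ref{prop:5-8}, and your argument simply makes explicit why that reduction works---namely that, with $u$ held fixed, the kinematic term $\tfrac{1}{2}\rho|u|^2=\sum_c\tfrac{1}{2}\rho_c|u|^2\,\xi_c$ enters $\cS_\mul$ with the same sign and the same linear-in-$\xi_c$ structure as $p_P$, so every step of the static computation (including the constraint bookkeeping on $M_{ab}^\prop$ and $M_{abc}^\prop$) carries over verbatim under the replacement $p_c\mapsto p_c+\tfrac{1}{2}\rho_c|u|^2$. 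The paper leaves all of this implicit, so your write-up is a faithful, more detailed rendering of the intended argument rather than a different one.
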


Similarly we have the similar equations for  higher intersection regions.
\begin{proof}
It is the same as proof of Proposition
\ref{prop:5-8}.
\qed
\end{proof}

Using these equations, we have the second theorem,
which is our main theorem:
\begin{theorem} \label{th:5-2}
For every $(x, t) \in \Omega \times T$,
the variational principle, $\delta \cS_\mul/ \delta \gamma(x,t) = 0$,
provides the equation of motion,
\begin{equation}
\begin{split}
       \frac{D \rho u^i}{D t} +
           & \partial_i p 
   +\sum_{a> b} \sigma_{a,b}\Bigr[
           \partial_i \left(
\sqrt{|\nabla \xi_a| |\nabla \xi_b|}
 (\xi_a+\xi_b) \right) \\
    &
       - \partial_j\left(
         \frac{\partial_i \xi_a(x) \partial_j \xi_a(x) }
              {\sqrt{|\nabla \xi_a|}^{3}}
              \sqrt{|\nabla \xi_b|}
(\xi_a+\xi_b) \right) \Bigr] = 0.
\end{split}
\label{eq:EeqM}
\end{equation}
Here $p$ is the pressure coming from the effect of the 
volume-preserving or incompressible condition, which
includes the proper pressure $p_P$  (\ref{eq:proppress}).
\end{theorem}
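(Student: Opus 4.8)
The plan is to exploit the additivity of the action $\cS_\mul$ in (\ref{eq:actionM}) and to handle its two parts by the machinery already built: Proposition \ref{prop:3-1} for the fluid and Proposition \ref{prop:5-9} for the multi-phase surface energy. First I would split the integrand into the kinematic density $\frac12\rho|u|^2$ and the remaining potential density $-\sum_{a>b}\sigma_{ab}\sqrt{|\nabla\xi_a||\nabla\xi_b|}(\xi_a+\xi_b)+p_P$, noting that the latter is exactly the negative of the integrand of the static free energy $\cF_\mul$ of Proposition \ref{prop:5-8}. The variation is taken with respect to the flow map $\gamma^i(x,t)$ inside $\SDiff(\Omega)$, so as in (\ref{eq:tgamma})--(\ref{eq:etgamma}) I would use the volume-preserving normalisation $\partial\gamma/\partial x=1$ together with $\frac{d}{dt}(\partial\gamma/\partial x)=0$, which keeps the measure $d^3x$ inert under the deformation.

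For the kinematic part I would repeat the computation of Proposition \ref{prop:3-1} and Remark \ref{rmk:3-2}. This part must be treated by the Lie-group variational method of Section \ref{sec:three} precisely because $u$ depends on $\dot\gamma$ through the group element $g^{-1}\dot g$; the infinitesimal $\SDiff(\Omega)$-action then produces the material derivative $D\rho u^i/Dt$ together with the pressure gradient arising from the Hodge projection of $T\Diff(\Omega)|_{\SDiff(\Omega)}$ onto $T\SDiff(\Omega)$. The only novelty relative to Section \ref{sec:three} is that $\rho=\sum_a\rho_a\xi_a$ is now advected rather than constant, exactly as in the two-phase Proposition \ref{prop:4-6}; since each $\xi_a=\xi_a(\gamma(x,t))$ is frozen into the fluid, the variable density is carried through the material derivative unchanged.

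For the surface part the essential observation is that the potential density of (\ref{eq:actionM}) contains no $\partial_t\xi_a$ and no $\dot\gamma$, so the variation $\gamma\mapsto\gamma+\delta\gamma$ induces on each color function precisely $\delta\xi_a=\partial_j\xi_a\,\delta\gamma^j$, which has the same structure as the spatial translation $x\mapsto x+\delta x$ of Noether's Proposition \ref{prop:A-2}. Consequently the surface contribution to $\delta\cS_\mul/\delta\gamma^i$ is, term by term, the negative of $\delta\cF_\mul/\delta x^i$ already evaluated in Proposition \ref{prop:5-9}; the sign flip relative to (\ref{eq:MSFe}) is exactly the sign with which the surface energy enters the Lagrangian (\ref{eq:actionM}), which is why the surface force appears with a plus sign in (\ref{eq:EeqM}). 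I would also recall from Remark \ref{rmk:4-0} that, because $|\partial_i\xi_a|\le|\nabla\xi_a|$, these terms are well defined on all of $\Omega$, including the junction strata $M_{ab}^\prop$, $M_{abc}^\prop$ and higher ones, so no case analysis into the geometric strata of Lemma \ref{lemma:5-11} is required at this stage.

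Finally I would assemble the two pieces and absorb the proper pressure $p_P$ and the kinematic term $\frac12\rho|u|^2$ into a single total pressure $p=p_L+\frac12\rho|u|^2+p_P$ by the renormalisation of Remark \ref{rmk:4-11}, with $p$ then fixed by $\mathrm{div}(u)=0$ through the Poisson equation of Remark \ref{rmk:3-2}; collecting the material-derivative term, the pressure gradient and the surface force gives (\ref{eq:EeqM}). The step I expect to be the genuine obstacle is the surface part: one must justify rigorously that varying the Lagrangian flow $\gamma$ reduces the color-function-dependent terms to the static Noether computation of Proposition \ref{prop:5-9} (that the sole effect of $\delta\gamma$ on $\sqrt{|\nabla\xi_a||\nabla\xi_b|}(\xi_a+\xi_b)$ is through $\delta\xi_a=\partial_j\xi_a\,\delta\gamma^j$ and the trivial Jacobian), and that the coupling of $\rho$ to the $\xi_a$ being varied—responsible for the $\frac12(\rho_a-\rho_b)|u|^2$-type contributions in the dynamic generalized Laplace equations of Lemma \ref{lemma:5-11}—is consistently absorbed into the pressure renormalisation rather than producing spurious residual terms.
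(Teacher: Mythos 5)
Your proposal is correct and follows essentially the same route as the paper: the paper's own (one-line) proof assembles Proposition \ref{prop:3-1} and Remark \ref{rmk:3-2} for the kinematic part and pressure, Noether's theorem (Proposition \ref{prop:A-2}) together with the dynamic Euler--Lagrange equations (Lemma \ref{lemma:5-11}) for the surface force, and the pressure renormalisation --- exactly your ingredients, your invocation of Proposition \ref{prop:5-9} being nothing but Proposition \ref{prop:A-2} packaged with the Euler--Lagrange equations. The obstacle you flag at the end (the $\frac{1}{2}(\rho_a-\rho_b)|u|^2$ contributions from the $\xi_a$-dependence of $\rho$) is resolved precisely as you suggest, via Lemma \ref{lemma:5-11} and absorption into $p$ as in Remark \ref{rmk:4-11}, which is also how the paper handles the two-phase case in Proposition \ref{prop:4-6}.
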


\begin{proof}
We naturally obtain it by using 1) 
Proposition \ref{prop:3-1} and its proof,
2) Remark \ref{rmk:3-2},
3) Lemma \ref{lemma:5-11}
and 4) Proposition \ref{prop:A-2}.
\qed
\end{proof}

Here we note that
by expressing the low-dimensional geometry in terms
of the global smooth functions $\xi$'s with finite $\epsX$,
we have unified the infinite dimensional geometry
or the incompressible fluid dynamics
governed by $\IFluid(\Omega \times T)$, and the $\epsX$-parameterized
low dimensional geometry with singularities to obtain  
the extended Euler equation (\ref{eq:EeqM}).
When $\epsX$ approaches to zero, we must consider the 
hyperfunctions \cite{KKK,II} instead of $\cC^\infty(\Omega \times T)$,
but we conjecture that our results would be justified even under the limit;
the unification would have more rigorous meanings.

It should be noted that
on the unification,
it is very crucial that we express the low-dimensional geometry in terms
of the global smooth functions $\xi$'s as 
the infinite-dimensional vector spaces.
The $\SDiff(\Omega)$ naturally acts on $\xi$'s and thus 
we could treat the low-dimensional geometry and the incompressible
fluid dynamics in the framework of the infinite dimensional
Lie group \cite{AK,EM,O}.
It is contrast to the level-set method.
As mentioned in Section \ref{sec:two-one},
the level-set function does not belong to $\cC^\infty(\Omega)$
and thus we can not consider $\SDiff(\Omega)$ action and
treat it in the framework.

\begin{myremark}\label{rmk:11}{\rm{
\begin{enumerate}
\item
(\ref{eq:EeqM}) is the Euler equation with the surface tension
to multi-phase fields
which gives the equation of motion of the multi-phase flow even with
the multiple junctions.
As we will illustrate examples in Section \ref{sec:VOF}, 
the dynamics with the triple junction 
can be solved without any geometrical constraints.
It should also noted that for a point in $M_{ab}^\prop$, 
(\ref{eq:EeqM}) is reduced to the original Euler equation in 
Reference \cite{LZZ}
or (\ref{eq:Eulxi}).

\item
The Euler equation (\ref{eq:EeqM}) appears as the momentum conservation in
the sense of Noether's theorem (Section \ref{sec:two-two}).
It implies that (\ref{eq:EeqM}) is natural from the geometrical viewpoint
\cite{Ar,AK,EM,K,Ko,MW,NHK}.

\item
Further even though we set $\{\xi_a(\cdot, t)\}$ as 
proper $\epsX$-controlled
colored functions as an initial state,
 their time-development is not guaranteed 
that $\{\xi_a(\cdot, t)\}$,  $(t>0)$,
is  proper $\epsX$-controlled.
In general $\epsX$ may become large for the time development,
at least, numerically due to the numerical diffusion. (See examples in
Section \ref{sec:VOF}).
However even for $t>0$,
we can find $\epsX(t)$ such that
 $\{\xi_a(\cdot, t)\}$ are $\epsX(t)$-controlled colored functions
and if $\epsX(t)$ is sufficiently small, our approximation is
guaranteed by $\epsX(t)$.

\item
The surface tension is also defined over $\Omega \times T$ and thus
 the Euler equation is defined over $\Omega \times T$ without any 
assumptions due to Remark \ref{rmk:4-0}.

\item We may set $\epsX$ depending upon the individual
intermediate region between these fields by 
letting $\epsilon_{ab}$ mean that for
$\xi_a$ and $\xi_b$, $a\neq b$. Then if we recognize $\epsX$ as 
$\displaystyle{\max_{a, b =0}^{N-1}\epsilon_{ab}}$, 
above arguments are applicable for the
case.

\item We defined the $\epsX$-controlled colored functions
using the $\epsT$-tubular neighborhood $T_{U, \epsT}$ and
the colored decomposition of $\Omega$
in Definition \ref{def:5-4} by letting $\epsT = \epsX$.
On the other hand,
as in Reference \cite{LZZ},
our formulation can describe a topology change well following
the Euler equation (\ref{eq:EeqM}) such as a split of
a bubble into two bubbles in a liquid. 
The $\epsX$-controlled colored functions 
can represents the geometry for such a topology change without any
difficulties. However on the topology change,
the path-connected region and 
the $\epsX$-tubular neighborhood 
lose their mathematical meaning and thus, more rigorously,
we should redefine the $\epsX$-controlled colored functions. 
Since
the  $\epsX$-controlled colored functions represent the geometry
as an analytic geometry, it is not difficult to modify the definitions
though it is too abstract.
In other words, we should first define the $\epsX$-controlled colored 
functions $\xi$'s without the base geometry, and 
characterize geometrical objects using
the functions $\xi$'s.
However since such a way is too abstract to find these geometrical meanings,
we avoided a needless confusion in these definitions and employed
Definition \ref{def:5-4}.

\end{enumerate}
}}
\end{myremark}

\subsection{Equation of motion of triple-phase flow}

Let us concentrate ourselves on a triple-phase flow problem,
noting (\ref{eq:sym}).
From the symmetry of the triple phase,
we introduce {\lq\lq}proper{\rq\rq} surface tension coefficients,
$$
	\sigma_0 = \frac{\sigma_{01} + \sigma_{02} - \sigma_{12}}{2}, \quad
	\sigma_1 = \frac{\sigma_{01} + \sigma_{12} - \sigma_{02}}{2}, \quad
	\sigma_2 = \frac{\sigma_{02} + \sigma_{12} - \sigma_{01}}{2}, \quad
$$
or $\sigma_{ab} = \sigma_a + \sigma_b$.
Here it should be noted that
the {\lq\lq}proper{\rq\rq} surface tension coefficient 
is based upon the speciality of the triple-phase and
does not have more physical meaning than above definition.

\begin{lemma} \label{lemma:5-2}
For different $a, b,$ and $c$, we have the following
approximation,
\begin{equation}
\left|\int_\Omega\left( \sqrt{|\nabla \xi_a| |\nabla \xi_b|}
             (\xi_a + \xi_b)
	+ \sqrt{|\nabla \xi_a| |\nabla \xi_c|}
             (\xi_a + \xi_c)
	- |\nabla \xi_a|\right)d^3 x\right|  < \epsX \cA_a.
\label{eq:Appr3}
\end{equation}
\end{lemma}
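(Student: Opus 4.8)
The plan is to show that the integrand vanishes identically on the two-phase portions of $\supp(|\nabla\xi_a|)$ and then to absorb the small remainder coming from the triple-junction region into the $O(\epsX)$ bound. First I would observe that, since every term carries a factor built from $\nabla\xi_a$, the integrand (call it $f$) is supported in $\supp(|\nabla\xi_a|)\subset T_{\partial_\sing B_a,\epsX}$. In the triple-phase case the only phases are $a,b,c$, so $\partial_\sing B_a$ decomposes, up to the measure-zero triple-junction curve, into the two interfaces $B_a\cap B_b$ and $B_a\cap B_c$; correspondingly the support splits into the proper regions $M_{ab}^\prop$ and $M_{ac}^\prop$ of (\ref{eq:Mabprop}) together with the triple-overlap region $M_{abc}$ of (\ref{eq:Mabcprop}).

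On $M_{ab}^\prop$ one has $\xi_a+\xi_b=1$, so $\xi_c\equiv 0$ on this open set and hence $|\nabla\xi_c|=0$ there. The identity (\ref{eq:sym}) then gives $\sqrt{|\nabla\xi_a||\nabla\xi_b|}(\xi_a+\xi_b)=|\nabla\xi_a|$, while the $c$-term drops out, so $f=|\nabla\xi_a|-|\nabla\xi_a|=0$ pointwise. The symmetric computation on $M_{ac}^\prop$, exchanging the roles of $b$ and $c$, again yields $f=0$. Thus $f$ vanishes everywhere except on the triple-overlap region $M_{abc}$.

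It then remains to estimate $\int_{M_{abc}}|f|\,d^3x$. Here I would bound $|f|$ by the sum of its three nonnegative pieces and use $\sqrt{|\nabla\xi_a||\nabla\xi_b|}\le\tfrac12(|\nabla\xi_a|+|\nabla\xi_b|)$, reducing the estimate to the integrals of $|\nabla\xi_a|,|\nabla\xi_b|,|\nabla\xi_c|$ over a tube of radius $\epsX$ about the triple-junction curve. Exactly as in the remainder estimate in the proof of Proposition \ref{prop:5-2}, each such integral is of higher order in $\epsX$ relative to the surface data, so the whole contribution is $O(\epsX)$, and collecting the geometric constant into $\cA_a$ gives the stated bound.

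Equivalently, and this is the cleanest route to the precise constant $\cA_a$, I would read the three terms as approximate areas: by Proposition \ref{prop:5-1} the last term integrates to $\cA_{\xi_a}\approx\cA_a$, and by the argument in the proof of Proposition \ref{prop:5-2} applied to a single interface the first two integrate to $\Area(B_a\cap B_b)$ and $\Area(B_a\cap B_c)$. Since $\cA_a=\Area(B_a\cap B_b)+\Area(B_a\cap B_c)$ in the triple-phase case, the leading terms cancel and only the three $O(\epsX)$ approximation errors survive. I expect the main obstacle to be precisely this remainder bookkeeping: making rigorous that the triple-junction neighborhood contributes only at order $\epsX$, using the tubular-neighborhood structure and the finiteness of the junction length from Assumption \ref{assump:one}, and verifying that the accumulated error is genuinely controlled by $\epsX\cA_a$ rather than by some unrelated geometric quantity.
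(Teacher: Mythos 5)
Your proof is correct, and in fact it supplies something the paper itself omits: Lemma \ref{lemma:5-2} is stated with no proof at all (its only use, Proposition \ref{prop:5-3}, is disposed of with the words {\lq\lq}Due to Lemma \ref{lemma:5-2}, it is obvious{\rq\rq}). Your argument is precisely the one the surrounding text implicitly intends. On $M_{ab}^\prop$ the constraint $\xi_a+\xi_b=1$ forces $\xi_c\equiv 0$ on that open set, hence $\nabla\xi_c=0$ there, and the identity (\ref{eq:sym}) with $\ell=1$ collapses the first term to $|\nabla\xi_a|$, so the integrand vanishes pointwise (symmetrically on $M_{ac}^\prop$); the only surviving contribution is the tube $M_{abc}$ around the triple junction, which is estimated exactly as the remainder term in the proof of Proposition \ref{prop:5-2}. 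Two caveats, both of which you share with the paper rather than introduce: first, the closing step needs the triple-tube contribution, of size roughly $\epsX\,\Length(B_a\cap B_b\cap B_c)$, to be dominated by $\epsX\,\cA_a$, which compares a length against an area and holds only up to a geometry-dependent constant --- the paper's own Remark \ref{rmk:5-1} exhibits the same looseness; second, your cleaner area-cancellation route additionally uses $\cA_a=\Area(B_a\cap B_b)+\Area(B_a\cap B_c)$, which is valid because in the triple-phase case $\partial_\sing B_a$ is, up to the measureless junction curve, the union of those two interfaces. Neither caveat is a gap relative to the paper's standard of rigor, so your write-up can stand as the missing proof.
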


Using the relation,
the free energy (\ref{eq:cEN}) has a simpler expression up to
$\epsX$.
\begin{proposition} \label{prop:5-3}
 By letting
\begin{equation*}
	\cE^{(3)}_{\mathrm{sym}} := \sigma_{0}\int_\Omega d^3x\ |\nabla \xi_0(x)| 
          + \sigma_{1}\int_\Omega d^3x\ |\nabla \xi_1(x)|  
          + \sigma_{2}\int_\Omega d^3x\ |\nabla \xi_2(x)| ,
\end{equation*}
we have a certain number $M$ related to area of the surfaces $\{B_a\}$
such that
\begin{equation*}
   |\cE^{(3)} - \cE^{(3)}_{\mathrm{sym}}| < \epsX M.
\end{equation*}
\end{proposition}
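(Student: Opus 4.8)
The plan is to reduce the claim to three applications of Lemma~\ref{lemma:5-2} after rewriting $\cE^{(3)}$ in terms of the proper coefficients. First I would write out the free energy (\ref{eq:cEN}) for $N=3$ explicitly as the sum over the pairs $(0,1)$, $(0,2)$, $(1,2)$,
$$
\cE^{(3)} = \sigma_{01}\int_\Omega \sqrt{|\nabla\xi_0||\nabla\xi_1|}(\xi_0+\xi_1)\,d^3x
+ \sigma_{02}\int_\Omega \sqrt{|\nabla\xi_0||\nabla\xi_2|}(\xi_0+\xi_2)\,d^3x
+ \sigma_{12}\int_\Omega \sqrt{|\nabla\xi_1||\nabla\xi_2|}(\xi_1+\xi_2)\,d^3x,
$$
and substitute $\sigma_{ab}=\sigma_a+\sigma_b$ into each coefficient, producing six integral contributions.

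Next I would regroup these six terms by the proper coefficient. Since phase $a$ occurs in exactly the two pairwise terms carrying the index $a$, the coefficient of $\sigma_0$ is
$$
\int_\Omega \left( \sqrt{|\nabla\xi_0||\nabla\xi_1|}(\xi_0+\xi_1)
+ \sqrt{|\nabla\xi_0||\nabla\xi_2|}(\xi_0+\xi_2)\right)d^3x,
$$
and symmetrically for $\sigma_1$ and $\sigma_2$. Each such bracket is precisely the left-hand combination estimated in Lemma~\ref{lemma:5-2} (for the appropriate permutation of $(a,b,c)$), so it differs from $\int_\Omega|\nabla\xi_a|\,d^3x$ by less than $\epsX\cA_a$.

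Finally I would subtract $\cE^{(3)}_{\mathrm{sym}}$, which is exactly $\sum_a \sigma_a\int_\Omega|\nabla\xi_a|\,d^3x$, and apply the triangle inequality term-by-term to obtain
$$
|\cE^{(3)}-\cE^{(3)}_{\mathrm{sym}}| < \epsX\left(|\sigma_0|\cA_0+|\sigma_1|\cA_1+|\sigma_2|\cA_2\right),
$$
so that $M:=|\sigma_0|\cA_0+|\sigma_1|\cA_1+|\sigma_2|\cA_2$, a quantity governed by the interface areas $\cA_a$, verifies the assertion. I expect the only delicate point to be the bookkeeping in the regrouping step, namely checking that after the substitution $\sigma_{ab}=\sigma_a+\sigma_b$ each proper coefficient collects exactly the two-summand expression of Lemma~\ref{lemma:5-2} with the correct assignment of $(a,b,c)$; the analytic content is carried entirely by that lemma, so no further estimate is needed. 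One should also note that the proper coefficients $\sigma_a$ need not be positive, which is why the error bound is most safely stated with $|\sigma_a|$ rather than $\sigma_a$.
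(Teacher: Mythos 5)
Your proposal is correct and follows essentially the same route as the paper, whose entire proof reads ``Due to Lemma~\ref{lemma:5-2}, it is obvious''; your substitution $\sigma_{ab}=\sigma_a+\sigma_b$, regrouping by proper coefficient, three applications of the lemma, and the triangle inequality are exactly the bookkeeping the paper leaves implicit. Your remark that the proper coefficients $\sigma_a$ may be negative, so the bound should use $|\sigma_a|$, is a point the paper glosses over and is a worthwhile refinement of the constant $M$.
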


\begin{proof}
Due to Lemma \ref{lemma:5-2}, it is obvious.
\qed
\end{proof}

The action integral (\ref{eq:actionM}) also becomes
\begin{equation*}
	\cS_\tri= \int_T d t\int_{\Omega}\left( \frac{1}{2}\rho |u|^2 -
	\sum_a( \sigma_a |\nabla\xi_a| - p_a \xi_a )\right) d^3 x.
\end{equation*}
For a practical reason, we consider a  simpler expression by
specifying the problem.

\subsection{Two-phase flow and wall with triple-junction}
\label{subsec:5-6}

More specially we consider the case 
that $\xi_o$ corresponds to the wall which does not move.
For the case, we can neglect the wall part of the equation, because
it causes a mere energy-shift of $\cE^{(3)}_{\mathrm{sym}}$.
Then the action integral and the Euler equation become simpler.
We have the following theorem as a corollary.

\begin{theorem} \label{th:5-3}
The action integral of two-phase flow with  wall is given by
\begin{equation*}
	\cS_\wall= \int_T d t\int_{\Omega}
     \left( \frac{1}{2}\rho |u|^2 -
	\sum_{a=1}^2( \sigma_a |\nabla\xi_a| - p_a \xi_a )\right) d^3 x,
\end{equation*}
and the equation of motion is given by
\begin{equation}
 \frac{D \rho u^i}{D t} 
          + \partial_i p 
          -\partial_j (\overline \tau_{ij}) = 0,
\label{eq:Eeq3}
\end{equation}
where
\begin{equation}
            \overline \tau = \sum_{a=1}^2\sigma_a 
\left(I - 
\frac{\nabla \xi_a}{|\nabla \xi_a|}
\otimes
\frac{\nabla \xi_a}{|\nabla \xi_a|}\right)
|\nabla \xi_a|.
\label{eq:overtau}
\end{equation}
\end{theorem}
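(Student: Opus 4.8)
The plan is to derive Theorem~\ref{th:5-3} as a corollary of the main Euler equation of Theorem~\ref{th:5-2}, specialized to $N=3$, after the triple-phase surface energy has been put into the symmetric single-field form and the wall phase has been frozen. First I would start from the action integral $\cS_\mul$ of (\ref{eq:actionM}) written for the three color functions $\xi_0,\xi_1,\xi_2$ and rewrite its surface-energy part. Using the proper coefficients $\sigma_a$ defined by $\sigma_{ab}=\sigma_a+\sigma_b$ together with the approximation of Lemma~\ref{lemma:5-2} and Proposition~\ref{prop:5-3}, the cross terms $\sqrt{|\nabla \xi_a||\nabla \xi_b|}\,(\xi_a+\xi_b)$ collapse, up to $O(\epsX)$, into $\sum_{a=0}^{2}\sigma_a|\nabla \xi_a|$, so that the action becomes $\cS_\tri$.

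Next I would impose that $\xi_0$ represents a static wall. Since the admissible volume-preserving deformations $\gamma$ keep the wall fixed, $\xi_0$ is unchanged under the variation, hence $\delta\xi_0=0$ and the term $\sigma_0|\nabla \xi_0|$ is a constant energy shift that drops out of the Euler--Lagrange equation entirely. Discarding it leaves exactly the action $\cS_\wall$ of the statement, with the sum running over the two moving phases $a=1,2$ and the proper pressures $p_a\xi_a$ retained.

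Then I would apply the variational principle $\delta\cS_\wall/\delta\gamma^i=0$ term by term. The kinematic term $\frac{1}{2}\rho|u|^2$ reproduces, exactly as in Proposition~\ref{prop:3-1} and Remark~\ref{rmk:3-2}, the part $\frac{D \rho u^i}{D t}+\partial_i p$, where $p$ absorbs the proper pressures and the $\frac{1}{2}\rho|u|^2$ renormalization via the volume-preserving (divergence-free) condition. For each surface term $\sigma_a|\nabla \xi_a|$ I would invoke Noether's formula, Proposition~\ref{prop:A-2}, applied under the translation $x\to x+\delta x$ precisely as in the static computation of Proposition~\ref{prop:4-5}: the single-field density $\sigma_a|\nabla \xi_a|$ produces the divergence of the stress tensor $\tau^{(a)}=\sigma_a\bigl(I-\frac{\nabla \xi_a}{|\nabla \xi_a|}\otimes\frac{\nabla \xi_a}{|\nabla \xi_a|}\bigr)|\nabla \xi_a|$. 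Summing over $a=1,2$ assembles $\overline\tau$ of (\ref{eq:overtau}) and yields (\ref{eq:Eeq3}).

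The hard part will be the bookkeeping at the contact line, i.e.\ the triple junction where the wall and the two fluids meet. There the interfaces $\partial_\sing B_0\cap\partial_\sing B_a$ coincide and the naive single-field splitting breaks down; what makes the argument go through is precisely that the $\epsX$-controlled smooth color functions resolve this singularity, so that the approximation of Lemma~\ref{lemma:5-2} holds up to $O(\epsX)$ and the variation stays well-defined across the junction. I would also check that freezing $\xi_0$ is consistent with the incompressibility of $\xi_1,\xi_2$: on the fluid side $\xi_0=0$, so $\xi_2=1-\xi_1$, giving $\nabla\xi_2=-\nabla\xi_1$, and the two contributions merge with effective coefficient $\sigma_1+\sigma_2=\sigma_{12}$, which confirms that the contact angle is produced by the balance of the $\sigma_a$ terms rather than imposed as a constraint, consistently with Section~\ref{sec:VOF}.
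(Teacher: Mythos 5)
Your proposal is correct and follows essentially the same route as the paper: reduce the triple-phase surface energy to the symmetric form $\sum_a \sigma_a|\nabla\xi_a|$ via Lemma \ref{lemma:5-2} and Proposition \ref{prop:5-3}, drop the static-wall term $\sigma_0|\nabla\xi_0|$ as a mere energy shift to obtain $\cS_\wall$, and then apply the same variational principle (Propositions \ref{prop:3-1}, \ref{prop:4-5}, \ref{prop:A-2} and Remark \ref{rmk:3-2}) term by term to get (\ref{eq:Eeq3}) with (\ref{eq:overtau}). Your added consistency checks at the contact line and the observation that $\nabla\xi_2=-\nabla\xi_1$ away from the wall (recovering the coefficient $\sigma_1+\sigma_2=\sigma_{12}$) go slightly beyond the paper's terse statement of the theorem as a corollary, but are fully in its spirit (cf.\ Remark \ref{rmk:5-2}).
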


Practically this Euler equation (\ref{eq:Eeq3})
is more convenient due to the proper surface tension coefficients.
However this quite differs from the original (\ref{eq:Amini0}) 
and (\ref{eq:Amini}) in Reference \cite{LZZ}
and governs the motion of two-phase flow with a wall completely. 

\begin{myremark}\label{rmk:5-2}
{\rm{
Equation (\ref{eq:Eeq3})
is the Euler equation with the surface tension
for two-phase fields with a wall or triple junctions
in our theoretical framework.
We should note that 
under the approximation (\ref{eq:Appr3}),
(\ref{eq:Eeq3}) is equivalent to (\ref{eq:EeqM}),
even though 
(\ref{eq:Eeq3}) is far simpler than (\ref{eq:EeqM}).

 From Remark \ref{rmk:4-0},
 it should be noted that
$\overline \tau$ 
and the Euler equation (\ref{eq:Eeq3})
are defined over $\Omega \times T$. This property
as a governing equation is very important for the 
computations to be stable, which is
mentioned in Introduction.
Since the non-trivial part of $\overline{\tau}$
is localized in $\Omega$ of each $t\in T$,
$\overline \tau$ vanishes and has no effect on the equation 
 in the other area.
}}
\end{myremark}

We will show some numerical computational results
of this case in the following section.
There we could also consider
the viscous stress forces and the wall shear stress.

\section{Numerical computations}
\label{sec:VOF}

In this section, we show some numerical
computations of two-phase flow surrounded by a wall obeying the extended Euler
equation in Theorem \ref{th:5-3}.
As in Theorem \ref{th:5-3},
 the wall is expressed by the color function $\xi_0$ and
has the intermediate region $(M_0 \setminus L_0^c)^\circ$ 
where $\xi_0$ has its value $(0,1)$.
As dynamics of the incompressible two-phase flow with a static wall, 
we numerically
solve the equations,
\begin{equation}
\begin{split}
    &\mathrm{div}(u) = 0,
 \\
    &\frac{D \rho u^i}{D t} + 
    (\partial_i p - K_i) = 0 ,
 \\
   &\frac{D \rho}{D t} = 0.
\end{split}
\label{eq:NumEq}
\end{equation}
Here for the numerical computations,
we assume that the force $K$ consists of
the surface tension, the viscous stress forces,
and the wall shear stress,
\begin{equation}
K_j = \partial_i  \bar \tau_{ij} + \partial_i  \tau_{ij} + \hat \tau_j.
\label{eq:Ki}
\end{equation}
Here 
$\bar \tau$ is given by (\ref{eq:overtau}),
$\tau$ is the viscous tensor,
$$
 \tau_{ij} := 2 \eta \left(E_{ij} - \frac{1}{3} 
           \mathrm{div} (u)\right), \quad
 E_{ij} := 
 \frac{1}{2}\left(\frac{\partial u^i}{\partial x_j}
 +\frac{\partial u^j}{\partial x_i}\right)
$$
with the viscous constant
$$
\eta(x)  := \eta_1 \xi_1 + \eta_2 \xi_2,
$$
and $\hat \tau_j$ is the wall shear stress which is localized at
the intermediate region $(M_0 \setminus L_0^c)^\circ$ 
where $\xi_0$ has its value $(0, 1)$.

The boundary condition of the interface between the fluid $\xi_a$ 
$(a = 1, 2)$ and the wall $\xi_0$ is generated dynamically in this
case. In other words, in order that
the wall shear stress term suppress the slip over the intermediate region
$(M_0 \setminus L_0^c)^\circ$ asymptotically $t \to \infty$ due to
damping, we let $\hat \tau_j$ be proportional to
$j$-component of $\partial u^\parallel /\partial q_0$
for the parallel velocity $u^\parallel$ to the wall and relevant to
$(1 - \xi_0(x))$, and make $u$ vanish over $L_0$.
Here $q_0$, $M_0$, and $L_0$ are of Definition \ref{def:5-4}.

The viscous force can not be dealt with in the framework of the 
Hamiltonian system because it has dissipation.
However from the conventional consideration of the balance of 
the momentum \cite[Sec.13]{EM},
it is not difficult to evaluate it.
The viscosity basically makes the numerical computations stable.

In the numerical computations,
we consider the problem in the structure lattice $\cL$ 
marked by $a \ZZ^3$,
where $\ZZ$ is the set of the integers and $a$ is a positive number.
The lattice consists of cells and faces of each cell.
Let every cell be a cube with sides of the length $a$.
We deal with a subspace $\Omega_\cL$
of the lattice as $\Omega_\cL :=\Omega \cap \cL \subset \EE^3$.
The fields $\xi$'s are defined over the cells as cellwise constant
functions and 
the velocity field $u$ is defined over faces as
facewise constant functions \cite{Ch};
$\xi$ is a constant function in each cell and depends on the
position of the cell,
and similarly the components of the velocity field,
$u^1$, $u^2$, and $u^3$ are facewise constant functions
defined over $x^2x^3$-faces, 
$x^3x^1$-faces, and $x^1x^2$-faces of each cell respectively.

As we gave a comment in Remark \ref{rmk:11} 5, we make the parameter
$\epsX$ depend on the intermediate region in this section.
Let $\epsilon_{12}$ be the parameter
 for the two-phase field or the liquids, and 
$\epsilon_0:= \epsilon_{01}\equiv\epsilon_{02}$ be
one for the intermediate region 
$(M_0 \setminus L_0^c)^\circ$ between liquids and the wall.

As mentioned in Introduction,
we assume that $\epsilon_{12}$ for the two-phase field
in our method is given as $\epsilon_{12} \ge a$
so that we could estimate the intermediate effect in our model
 following References \cite{AMS,BKZ,Cab,Jac,LZZ},
even though the thickness of the intermediate region among real liquids
is of atomic order and is basically negligible in the macroscopic theory. 

In the computational fluid dynamics, the VOF (volume of fluid) method
discovered by Hirt and his coauthors \cite{Ch,HN,H} 
is  well-established when we deal with fluid with a wall.
Since we handled  triple-junction problems as in Section
 \ref{subsec:5-6}, we reformulate our model in the VOF-method.
It implies that
we identify $1-\xi_0$ with the so-called $V$-function $V := 1 - \xi_0$
in the VOF method 
because $V$ in the VOF method
means the volume fraction of the fluid and corresponds to $1-\xi_0$
in our formulation.

\begin{figure}[H]
\begin{center}
\includegraphics[scale=0.50,angle=270]{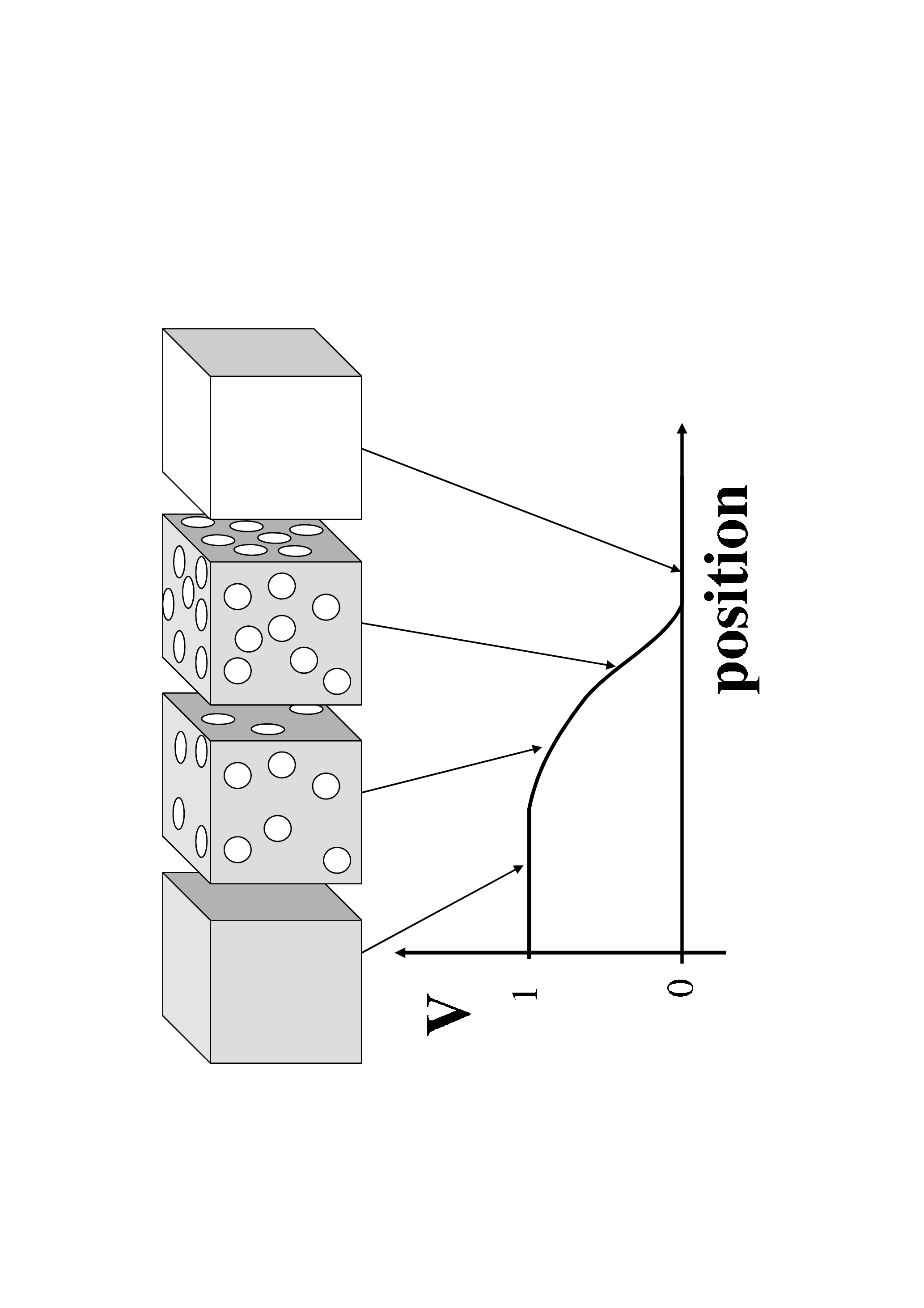}
\caption{VOF with porous matter expression: 
For the consistency between the color function method and
VOF-method,  
we consider each cell as a fictitious 
porous material whose volume ratio and 
open fraction are a value in $[0,1]$ without
imposing any wall shear stress on fictitious
surface of the porous parts in each cell.
This expression represents purely  geometrical effects.
}
\label{fig:VOF}
\end{center}
\end{figure}

As the convention in Reference \cite{H}, $V$ is also defined as
a cellwise constant function.
In the following examples, we will set 
$\epsilon_0$ to be $a$ or the unit cell basically.
However we can also make it $\epsilon_0 > a$ as for two-phase field.
It means that for the case $\epsilon_0 > a$,
we consider each cell as a fictitious 
porous material whose volume ratio $V \in [0,1]$ 
without imposing any wall shear stress on the fictitious
surface of the porous parts itself in each cell as in Figure 1.
(As mentioned above, we set the wall shear stress $\hat \tau_j$ from the
physical wall $\xi_0$. The porous parts are purely fictitious.)
The region where $V$ is equal to 1 means the region where fluid freely exists
whereas
the region where $V$ vanishes means the region where existence of fluid 
is prohibited. The region with $V \in (0,1)$ is the intermediate
region $(M_0 \setminus L_0^c)^\circ$. 
Here we emphasize that 
the fictitious porous in each cell brings
purely geometrical effects to this model.

Then we could go on to
consider the problem in consistency between
VOF-method and $\xi_0$ function in the phase-field model.
Let functions $f_1\equiv f$ and $f_2$
over $\supp(V)$ be defined by the relations,
\begin{equation*}
	\xi_1 = V f_1, \quad \xi_2 = V f_2, \quad f_1 + f_2 = 1.
\end{equation*}

Further we also modify the open fraction $A$ in the VOF-method,
which is defined over each face. We interpret $A$ as the open area of 
the fictitious porous material of each face of each cell,
which also has a value in $[0,1]$ as in Figure 1.
We also use the open area fraction $A$ of each face of each cell
\cite{H,HN}.
For a face belonging to the cell whose $V=1$, $A$ is also
equal to 1.
Following the convention in discretization by Hirt \cite{H}, 
$A$ is regarded as an operator acting
on the face-valued functions 
like
\begin{equation*}
	A \circ u \equiv  Au = (A_{1} u^1, 
                                A_{2} u^2,A_{3} u^3),
\end{equation*}
\begin{equation}
       (Au)^{1} = A_{1} u^1, \quad
       (Au)^{2} = A_{2} u^2, \quad
       (Au)^{3} = A_{3} u^3. \quad
\label{eq:Au}
\end{equation}
Here we note that $A_i a^2$ implicitly appearing in
(\ref{eq:Au}) can be interpreted as a 
two-chain of homological base associated with a face of a cell.
For example, for a velocity field $\mu := u^i(x) d x^i$ defined
over a cell in the continuous theory 
and a piece of the boundary element of the
cell $A_1 a^2$, 
the discretized $u^1$ defined over the face is given by  
$$
   (A u)^1 := \frac{1}{a^2} \int_{A_1 a^2} * \mu  = A_1 u^1,
$$
where $*$ is the Hodge star operator, {\it{i.e.}},
$*\mu := u^1(x) d x^2 d x^3 +
u^2(x) d x^3 d x^1 +
u^3(x) d x^1 d x^2$.
Thus the discretization (\ref{eq:Au}) is very natural even
from the point of view of the modern differential geometry. 

Hence $\mathrm{div} (u) \equiv \nabla u$ reads
$\nabla A u$ as the difference equation in VOF-method \cite{H}
and we employ this discretization method.


We give our algorithm to compute (\ref{eq:NumEq})
precisely as follows.
As a convention, we specify the quantities with {\lq\lq}old{\rq\rq}
and  {\lq\lq}new{\rq\rq} corresponding to the previous states
and the next states at each time step respectively
in the computation.
In other words, we give the algorithm that we construct the
next states using the previous data by regarding the current
state as an intermediate state in the time step.
We use the project-method \cite{Cho,Ch}; 
\begin{equation*}
\begin{split}
   \mbox{I}  &: \frac{\rho \tilde\uu - \rho \uu^\old }{\Delta t }
                 = -(\uu^\old\cdot \nabla ) \rho \uu^\old ,
   \nonumber \\
    \mbox{II}  &:\frac{\uu^\new-\tilde\uu  }{\Delta t }
                 = -\frac{1}{\rho}( \nabla  p -\KK),
   \nonumber \\
   \mbox{III}  &: \nabla \uu^\new =0. \\
\end{split}
\end{equation*}
The step I is the part of the advection of the 
velocity $\uu^\old$.
In the step I, we define an intermediate velocity
$\tilde\uu$ and after then,
we compute $\uu^\new$ and $p$ in the steps II and III.

The time-development of $\rho$ is given by the equation,
\begin{equation*}
 f^\new = f^\old + \Delta t \nabla ((A \uu^\old) f^\old),
\end{equation*}
and 
\begin{equation*}
\rho = V(\rho_{1} f + \rho_{2} (1-f))
\end{equation*}
for the proper densities $\rho_{a}$ of $\xi_a$ $(a = 1, 2)$.

Even for the case that we can deal with multi-phase flow with 
large density difference, we evaluate its time-development.
Precisely speaking,  when we evaluate $\tilde \uu$,
following the idea of Rudman \cite{R} we 
employ the momentum advection $\tilde\uu$ of $\uu$,
$$
\tilde\uu:=
   \frac{1}{\rho^\new}[\rho^\old\uu^\old
 -\Delta t(\uu^\old\cdot \nabla ) \rho^\old \uu^\old].
$$
Our derivation of the Euler equation shows that the Rudman's
method is quite natural.

Following the conventional notation, the 
guessed-value of the velocity is denoted by $\uu^*$, which
is the initial value for the steps in II and III.
Let us define
\begin{equation*}
\uu^* := \tilde\uu + 
 \Delta t\frac{1}{\rho^\new} \KK(\rho^\old,f^\old, \uu^\old).
\end{equation*}
In order to evaluate the guessed velocity,
we compute the force $\KK$ from
(\ref{eq:Ki}) noting that 
$\mathrm{div} \tau$ and $\mathrm{div} \overline\tau$ 
read
$\nabla A \tau$ and   $\nabla A \overline\tau$ respectively.

Following the SMAC (Simplified-Marker-and-Cell) method \cite{AH,Cho,Ch}, 
we numerically determine the new velocity $\uu^\new$
and the pressure $p$ in a certain boundary condition
using the preconditioned conjugate gradient method (PCGM):
\begin{equation*}
\begin{split}
\mbox{(IIa) Evaluate } p \mbox{ using the PCGM}
&:\frac{1}{\Delta t}\nabla( A\circ\uu^*) 
 = \nabla A \circ \frac{1}{\rho^\new} \nabla p, \\
\mbox{(IIb) By using }p \mbox{ determine } \uu^\new
&: \uu^\new = \uu^* - \Delta t \frac{1}{\rho^\new} \nabla p.\\
\end{split}
\end{equation*}
More precisely speaking, (III) $\nabla(A\circ \uu^\new)=0$ means
that we numerically solve the Poisson equation,
$$
\nabla\left(A \Delta t \frac{1}{\rho^\new} \nabla p\right)
=\nabla(A\circ \uu^*).
$$
Then we obtain $\uu^\new$, which obviously
satisfies (III) $\nabla(A\circ \uu^\new)=0$,
which is known as the Hodge decomposition method \cite{AH,Cho,CM} 
as mentioned in Remark \ref{rmk:3-2}.

Following the algorithm, we computed the two-phase flow with
a wall and triple junctions.
We illustrate two examples of the numerical
solutions of the triple junction problems as follows.

\bigskip 

\subsection{Example 1}
\bigskip 

Here we show a computation of
a capillary problem, or the meniscus oscillation,
in Figure 2.
We set two liquids in a parallel wall with the physical parameters;
$\eta_1 = \eta_2 = 0.1$[cp],
$\rho_1 = \rho_2 = 1.0$[pg/$\um^3$],
$\sigma_1 = 3.349$[pg/$\mu$sec${}^2$], 
$\sigma_2 = 46.651$[pg/$\mu$sec${}^2$].

We used $\cL:=12[\um] \times 0.5[\um]\times 16 [\um]$ lattice whose
unit length $a$ is $0.125[\um]$. 
The first liquid exists in the down side and the second liquid does 
in the upper side in the region  $10[\um] \times 0.5[\um]\times 15 [\um]$
surrounded by the wall and the boundaries with the boundary conditions.
As the boundary conditions,
at the upper side from the bottom of the wall by $15[\um]$,
we fix the constant pressure as $100$[KPa] and,
along $x^2$-direction, we set the periodic boundary condition.

We set $\epsilon_{12} = \epsilon_0 = 1$ mesh for the intermediate regions,
at least, as its initial condition.
Each time interval is 0.001 [$\mu$sec].

As the initial state, we start the state that
the fluid surface is flat as in Figure 2 (a)
and the first liquid exists in the box region
$10[\um] \times 0.5[\um] \times 7.0 [\um]$, which
is not stable.
Due to the surface tension, it moves and starts
to oscillate but due to viscosity, the oscillation decays.
Though we did not impose the contact angle as a geometrical constraint,
the dynamics of the contact angle was calculated due to a balance between
the kinematic energy and the potential energy or the surface energy.
The oscillation converged to the stable shape with the proper contact
angle, which is given by
\begin{equation}
\cos \varphi = \frac{\sigma_2 - \sigma_1} {\sigma_2 + \sigma_1}
  \equiv \frac{\sigma_{02} - \sigma_{01}} {\sigma_{12}}.
\label{eq:sigma_phi}
\end{equation}
The angle given by $\sigma$'s are designed as $30$ [degree] whereas it
in the numerical experiment in Figure 2 is a little bit
larger than $30$ [degree],
though it is very difficult to determine it precisely.
However since we could tune the parameters $\sigma$'s so that we obtain
the required state, our formulation is very practical.

Due to the numerical diffusions and others, 
the thickness of the intermediate regions changes
in the time development and also depends on the positions of
the interfaces, even though it is fixed the same at the 
initial state. However we consider that it is thin enough
to evaluate the physical system since the contact angle
is reasonably estimated.

\begin{figure}[H]
\begin{center}
\includegraphics[scale=0.50,angle=270]{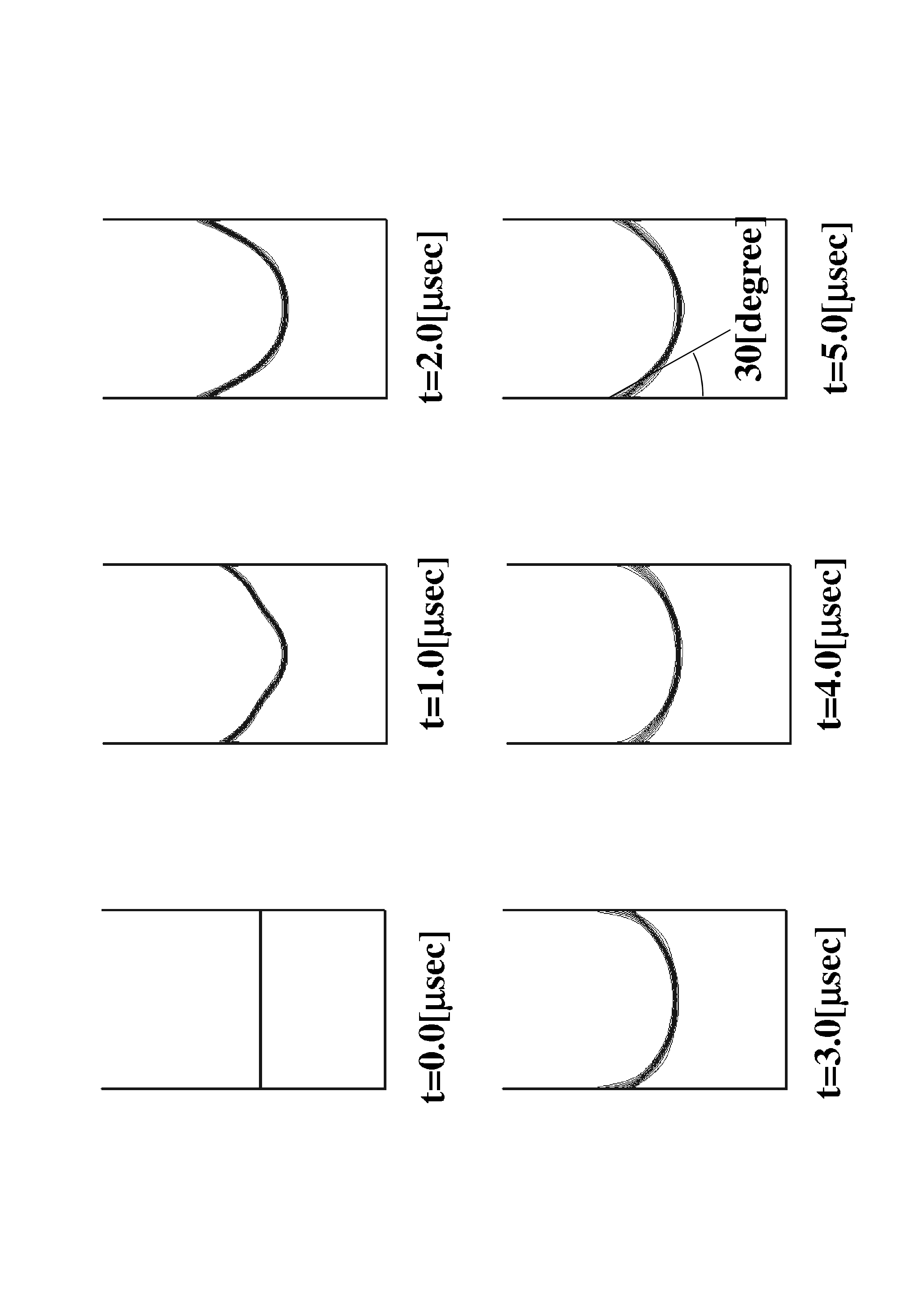}
\caption{ The meniscus oscillation:
Each figure shows the time development.}
\label{fig:OS}
\end{center}
\end{figure}

\subsection{Example 2}
This example is on the computations of the contact angles for different 
surface tension coefficients displayed in Figure 3.

Even in this case, in order to see the difference between
 the designed contact angle and computed one,
we go on to handle two-dimensional symmetrical
problems though we used three-dimensional computational software. 
In other words, we set that
$x^2$-direction is periodic.

Since the contact angle $\varphi$ in our convention is given by 
the formula (\ref{eq:sigma_phi}).
By setting $\sigma$'s
$$
	\frac{\sigma_1}{\sigma_2} =
 \frac{1- \cos\varphi}{1+ \cos\varphi},
$$
for given the contact angle $\varphi$, we computed
five triple junction problems without any geometrical constraints;
each $\sigma$ is given in the caption in Figure 3.
The other physical parameters are
given by $\eta_1 = \eta_2 = 0.1$[cp] 
and $\rho_1 = \rho_2 = 1.0$[pg/$\um^3$].

In this computation we used a $240\times 4\times 112$ lattice whose
unit length $a$ is $0.125[\um]$; $\Omega= 30[\um] \times 0.5[\um] 
\times 14[\um]$.
We set the flat layer as a wall
by thickness $3[\um]$ from the bottom of $\Omega$ along the $z$-axis. 
As the boundary conditions,
at the upper side from the bottom of the wall by $9[\um]$,
we fix the constant pressure as $100$[KPa].

As the initial state for each computation.
we set a semicylinder with radius $5[\mu m]$
 in the flat wall like Figure 3 (d).
We also set $\epsilon_{12} = \epsilon_0 = 1$ mesh for the intermediate regions.
Each time step also corresponds to 0.001 [sec]. 

Due to the viscosity, after time passes sufficiently $50[\mu$sec], the
static solutions were obtained as illustrated in Figure 3, which 
recover the contact angles under our approximation within
good agreements.

\begin{figure}[H]
\begin{center}
\includegraphics[scale=0.50,angle=270]{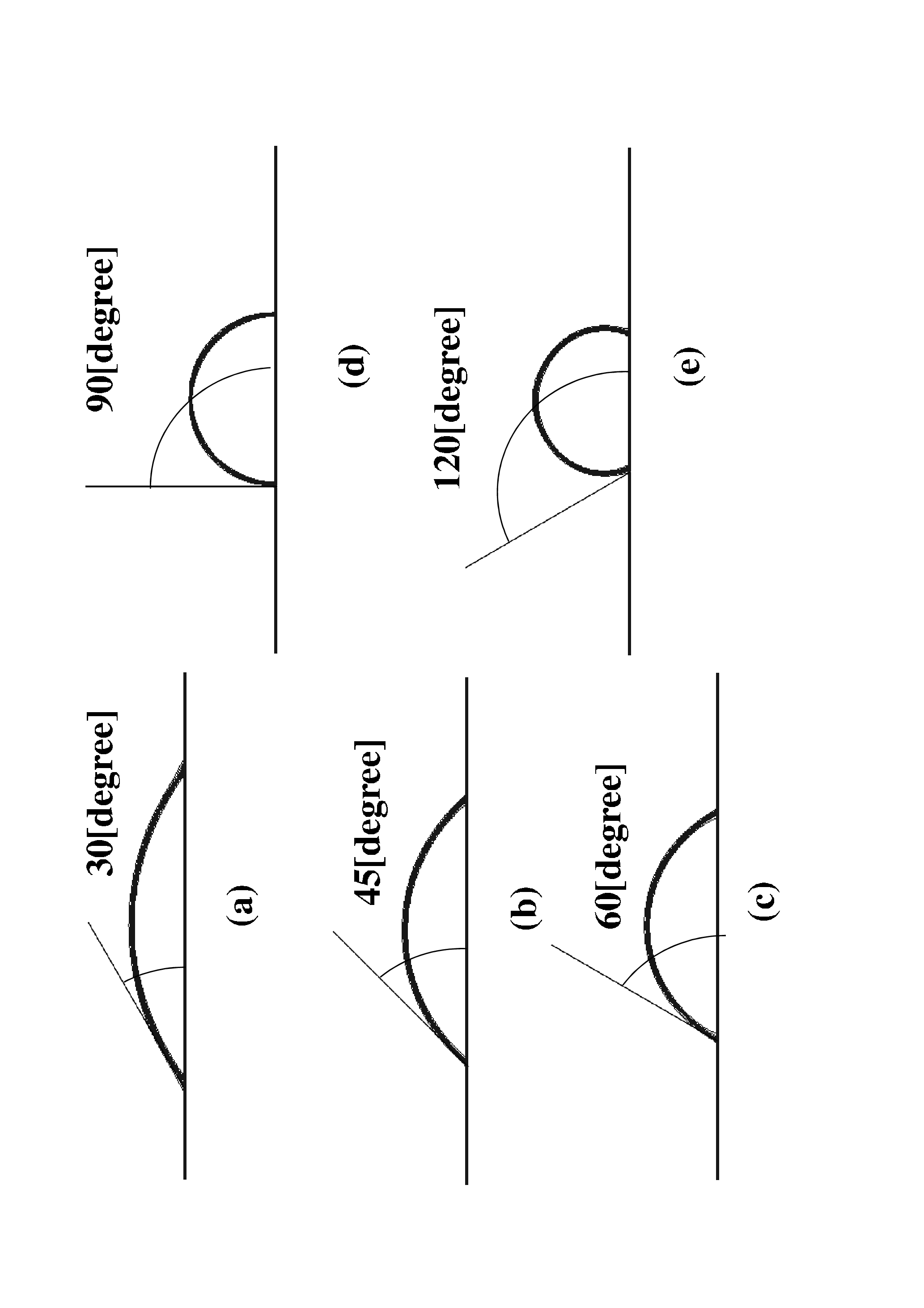}
\caption{ The different contact angles are illustrated due to 
the different surface energy: By fixing $\sigma_1 =1.0000$[pg/$\mu$sec${}^3$],
(a): $\varphi = 30$ [degree], $\sigma_2 = 13.9282 $[pg/$\mu$sec${}^3$],
(b): $\varphi = 45$ [degree], $\sigma_2 =  5.8284 $[pg/$\mu$sec${}^3$],
(c): $\varphi = 60$ [degree], $\sigma_2 =  3.0000$[pg/$\mu$sec${}^3$], 
(d): $\varphi = 90$ [degree], $\sigma_2 =  1.0000$[pg/$\mu$sec${}^3$], 
and
(e): $\varphi = 120$ [degree], $\sigma_2 = 0.3333$[pg/$\mu$sec${}^3$].} 
\label{fig:CA}
\end{center}
\end{figure}

\bigskip 
\bigskip 
\section{Summary}

By exploring an incompressible fluid with a phase-field
geometrically \cite{Ar,AK,EM,K,Ko,MW,NHK},
we reformulated the expression of the surface tension
for the two-phase flow found by 
Lafaurie, Nardone, Scardovelli, Zaleski and Zanetti
\cite{LZZ} as a variational problem.
We reproduced the Euler equation of two-phase flow (\ref{eq:Eulxi})
following the variational principle of the action integral
(\ref{eq:AI2d}) in Proposition \ref{prop:4-6}.
 
The new formulation along the line of the variational principle
enabled us to extend (\ref{eq:Eulxi}) to that for the 
 multi-phase ($N$-phase, $N\ge2$) flow.
By extending (\ref{eq:Eulxi}), we obtained the novel Euler equation 
(\ref{eq:EeqM}) with the surface tension of the multi-phase fields
in Theorem \ref{th:5-2} from the action integral of
Theorem \ref{th:5-1}
as the conservation of momentum in the sense of Noether's theorem.
The variational principle for the infinite dimensional system
in the sense of References \cite{Ar,AK,EM} gives the equation of motion
of multi-phase flow controlled by the small parameter $\epsX$
without any geometrical
constraints and any difficulties for the singularities at
multiple junctions.

For the static case, we gave governing equations 
(\ref{eq:ELM2}), (\ref{eq:ELM3}) and (\ref{eq:MSFe}) 
which generate  the
locally constant mean curvature surfaces with triple junctions
by controlling a parameter $\epsX$ to avoid these singularities.
As the solutions of (\ref{eq:ELs}) has been studied well
as the constant mean curvature surfaces  
for last two decades \cite{ES,FW,GMO,T}, our extended equations 
(\ref{eq:ELM2}), (\ref{eq:ELM3}) and (\ref{eq:MSFe}) 
might shed new light on treatment of singularities of their extended
surfaces, or a set of locally constant mean 
curvature surfaces.
 (Even though we need an interpretation of our scheme,
for example, 
it can be applied to a soap film problem with triple junction.) 
It implies that our method might give a method of resolutions
of singularities in the framework of analytic geometry.

By specifying the problem of the multi-phase flow
 to the contact angle  problems
at triple junctions with a static wall,
we obtained the simpler Euler equation (\ref{eq:Eeq3})
in Theorem \ref{th:5-3}.
Using the VOF method \cite{H,HN},
we showed two examples of the numerical computations
in Section \ref{sec:VOF}.
In our computational method,
for given surface tension coefficients, 
the contact angle is automatically generated
by the surface tension
without any geometrical constraints
 and any difficulties for the singularities at
triple junctions.  The computations were very stable.
It means that the computations did not
collapse nor behave wildly for every initial and the boundary
conditions.

In our theoretical framework,
we have unified the infinite dimensional geometry
or an incompressible fluid dynamics
governed by $\IFluid(\Omega \times T)$, and the $\epsX$-parameterized
low dimensional geometry with singularities given by the multi-phase fields.
We obtained all of equations following the same 
variational principle.
We naturally reproduced the Laplace equations, (\ref{eq:ELs}) and
(\ref{eq:ELM2}), and obtained their generalizations
(\ref{eq:EL2d}), 
(\ref{eq:ELM2}), (\ref{eq:ELM3}),
(\ref{eq:ELM3d}) and (\ref{eq:MSFe}),
and the Euler equations,
(\ref{eq:Eulxi}),
(\ref{eq:EeqM}), and (\ref{eq:Eeq3})
in Proposition \ref{prop:4-6} and Theorems \ref{th:5-2} and  \ref{th:5-3}.
These equations are derived
from the same action integrals by choosing the physical parameters. 
In the sense of References \cite{Ar,AM,BGG}, it implies that
we gave geometrical interpretations of the multi-phase flow.
Even though the phase-field model has
the artificial intermediate regions with unphysical thickness $\epsX$, 
our theory supplies a model which shows how to evaluate their effects on
the  surface tension forces, from geometrical viewpoints.
The key fact of the model
is that we express the low-dimensional geometry in terms
of the infinite-dimensional vector spaces, or 
{\it{global functions $\xi$'s}}
which have natural $\Diff$ and $\SDiff$ actions.
Thus
we can treat them in the framework of infinite dimensional
Lie group \cite{AK,EM,O} to consider its Euler equation.
It is contrast to the level-set method;
in analytic geometry and algebraic geometry, zeros of a function
expresses a geometrical object and thus the level-set method is 
so natural from the point of view. However as mentioned in Section
\ref{sec:two-one}, the level-set function cannot be a global functions 
as $\cC^\infty(\Omega)$ and thus it is difficult to handle the method in the
framework of the infinite dimensional Lie group $\SDiff(\Omega)$.

As our approach gives a resolution of the singularities
by a parameter $\epsX$,
in future we will explore topology changes, geometrical objects
with singularities and so on, more concretely in our theoretical framework.
When $\epsX$ approaches to zero, we need more rigorous arguments in 
terms of hyperfunctions \cite{KKK}
but we conjecture that our results would be correct for the 
vanishing limit of $\epsX$ because the Heaviside function  is expressed by
$\displaystyle{\theta(q) = \lim_{\epsX \to 0} \frac{1}{\pi}\tan^{-1}
\left(\frac{q}{\epsX}\right)}$ in the Sato hyperfunction theory,
which could be basically identified with $\xi(q)$ of the finite $\epsX$.
Since an application of the Sato hyperfunction theory to fluid
dynamics was reported by Imai on vortex layer and so on \cite{II},
we believe that this approach might give another collaboration
between pure mathematics and fluid mechanics.

\bigskip 
\bigskip 

Acknowledgements:

This article is written by the authors in memory
of their colleague, collaborator and leader Dr. Akira Asai
who led to develop this project.
The authors are also grateful to Mr. Katsuhiro Watanabe
for critical discussions and to the anonymous referee for
helpful and crucial comments.

\bigskip 
\bigskip


Shigeki Matsutani 

Analysis technology center, Canon Inc. 

3-3-20, Shimomaruko, Ota-ku, Tokyo 146-8501, Japan 

matsutani.shigeki@canon.co.jp,

\bigskip

Kota Nakano 

Analysis technology center, Canon Inc. 

3-3-20, Shimomaruko, Ota-ku, Tokyo 146-8501, Japan 

\bigskip

Katsuhiko Shinjo

Analysis technology center, Canon Inc. 

3-3-20, Shimomaruko, Ota-ku, Tokyo 146-8501, Japan 

\end{document}